\newcommand{\m}{\Lambda}
\newcommand{\Hom}{\operatorname{Hom}}
\newcommand{\Ker}{\operatorname{Ker}}
\newcommand{\cok}{\operatorname{Coker}}
\newcommand{\Ima}{\operatorname{Im}}
\newcommand{\rad}{\operatorname{rad}}
\newcommand{\Ext}{\operatorname{Ext}}
\newcommand{\add}{\operatorname{add}}
\newcommand{\op}{\operatorname{op}}
\DeclareMathOperator{\pd}{proj.dim}%
\DeclareMathOperator{\id}{inj.dim}%
\newcommand{\ha}{\operatorname{\mathcal{A}}}
\newcommand {\hp}{\mathcal {P}}
\newcommand{\Gp}{{\rm Gproj}}
\newcommand{\fX}{\mathcal{X}}
\newcommand{\fY}{\mathcal{Y}}
\newcommand{\rep}{{\rm rep}}
\newcommand{\smon}{{\rm smon}}
\newcommand{\sepi}{{\rm sepi}}
\newcommand{\Proj}{{\rm proj}}
\newtheorem{thm}{Theorem}[section]
\newtheorem{cor}[thm]{Corollary}
\newtheorem{lem}[thm]{Lemma}
\newtheorem{exm}[thm]{Example}
\newtheorem{prop}[thm]{Proposition}
\newtheorem{rem}[thm]{Remark}
\newtheorem{defn}[thm]{Definition}
\newtheorem{constr}[thm]{Construction}
\newtheorem{que}[thm]{Question}
\newcounter{margincounter}
\begin{document}

\title [Semi-Gorenstein-projective ]
{ Separated monic correspondence of cotorsion pairs and Semi-Gorenstein-projective modules}
\author [Xiu-Hua Luo, Shijie Zhu ] {Xiu-Hua Luo, Shijie Zhu$^*$}
\thanks{$^*$The corresponding author.}
\dedicatory{Dedicated to Professor Pu Zhang on the occasion of
  his 60th birthday}
\thanks{Supported by the NSF of China (No. 12201321).}
\thanks{xiuhualuo$\symbol{64}$ntu.edu.cn  \ \ \ \ shijiezhu$\symbol{64}$ntu.edu.cn}
\thanks{2020 \textit{Mathematics Subject Classifications}. 16G10, 16G50,  16E65, 16B50}
 \maketitle

\begin{center}
Department of Mathematics, \ \ Nantong University\\
Jiangsu 226019, P. R. China
\end{center}

\begin{abstract} \ Given a finite dimensional algebra $A$ over a field $k$, and a finite acyclic quiver $Q$,
let $\Lambda = A\otimes_k kQ/I$, where $kQ$ is the path algebra of $Q$ over $k$ and $I$ is a monomial ideal.
We show that $(\fX,\fY)$ is a (complete) hereditary cotorsion pair in $A$-mod if and only if $(\smon(Q,I,\fX), \rep(Q,I,\fY))$ is a (complete) hereditary cotorsion pair in $\m$-mod.
We also show that $A$ is left weakly Gorenstein if and only if so is $\m$.  Provided that $kQ/I$ is non-semisimple, the category $\leftidx^{\perp}\m$ of semi-Gorenstein-projective $\m$-modules coincides with the category of separated monic representations $\smon(Q,I,\leftidx^{\perp}A)$ if and only if $A$ is left weakly Gorenstein.

\vskip5pt

{\it Key words and phrases.  \ cotorsion pairs, \ separated monic representations, \ (semi-)Gorenstein-projective
 modules,\ left (right) weakly Gorenstein aglebras,\ triangular matrix rings.}

\end{abstract}

\section {\bf Introduction}

\vskip10pt

{\bf Throughout this paper, $k$ is a field and $A$ is a finite dimensional $k$-algebra. Let $Q$ be a finite acyclic quiver, $I$ an admissible ideal of the path algebra $kQ$ generated by monomial relations. Let $\m=A\otimes_k kQ/I$. Denote by $A$-mod ($\m$-mod) the category of finitely generated left $A$-modules ($\m$-modules). Denote by $\mathcal C^\perp:=\bigcap\limits_{i=1}^\infty\ker\Ext^i(\mathcal C,-)$ and    $^\perp\mathcal C:=\bigcap\limits_{i=1}^\infty\ker\Ext^i(-,\mathcal C)$        .}

\subsection{Background}

The notion of cotorsion classes in abelian categories was introduced by Salce in 1979 \cite{Sa}. A pair of subcategories $(\fX, \fY)$ in an abelian category $\ha$ is called a {\it cotorsion pair} if $\fX=\ker \Ext^1(-,\fY)$ and $\fY=\ker\Ext^1(\fX,-)$, where $\fX$ is called a {\it cotorsion class} and $\fY$ a {\it cotorsion-free class}. The definition of cotorsion pairs is similar to torsion pairs, in the sense that $\Ext^1$-orthoganality is considered for the pair of subcategories instead of $\Hom$-orthoganality. Cotorsion pairs have great significance in approximation theory as Salce's Lemma shows that under suitable conditions, every object in $\ha$ has a right $\fX$-approximation if and only if every object in $\ha$ has a left  $\fY$-approximation. Such cotorsion pairs are called {\it complete}. Later on, an important class of complete cotorsion pairs was investigated in Auslander and Reiten's fundamental paper \cite{AR1}, it was proved that in a cotorsion pair $(\fX,\fY)$, the subcategory $\fX$ is contravariantly finite resolving if and only if $\fY$ is covariantly finite coresolving. Such pairs $(\fX,\fY)$ are called {\it complete hereditary} cotorsion pairs. Moreover, Auslander and Reiten also established a correspondence between cotilting modules and certain complete hereditary cotorsion pairs (see Theorem \ref{cotilting induce} below). Our paper is devoted to study the correspondence between (complete) hereditary cotorsion pairs in $A$-mod and (complete) hereditary cotorsion pairs in $\m$-mod under the correspondence $(\fX,\fY)\mapsto (\smon(Q,I,\fX),\rep(Q,I,\fY))$, where $\smon(Q,I,\fX)$ is the subcategory of separated monic representations over $\fX$ (see Definition \ref{maindef}) which was originally invented for describing Gorenstein-projective $\m$-modules.

{\it A complete $A$-projective
resolution} is an exact sequence of finitely generated projective
$A$-modules
$$P^\bullet = \cdots \longrightarrow P^{-1}\longrightarrow P^{0}
\stackrel{d^0}{\longrightarrow} P^{1}\longrightarrow \cdots$$ such
that ${\rm Hom}_A(P^\bullet, A)$ is also exact. A module $M\in
A$-mod is {\it Gorenstein-projective}, if there exists a complete
$A$-projective resolution $P^\bullet$ such that $M\cong
\operatorname{Ker}d^0$ \cite{EJ1}.
This class of modules enjoys more stable properties than
the usual projective modules \cite{AB}. It becomes an important topic in the study of relative
homological algebra \cite{EJ1, EJ2} and in the representation theory
of algebras (see \cite{AR1, AR2, B, GZ, IKM}). From the geometric aspects, Gorenstein-projective modules are also closely related with singularity categories \cite{Buch, Hap2}.
   Thus explicitly constructing all the Gorenstein-projective
modules over a given algebra is a fundamental problem.

Recently, various methods have been developed in the philosophy of constructing Gorenstein-projective modules over a large algebra from those over a smaller algebra. For instance, in \cite{LZ, XZ} the authors characterized Gorenstein-projective modules over triangular matrix rings. Another efficient method of describing Gorenstein-projective modules involves the submodule category, which dates back to G. Birkhoff \cite{Birk} and has been extensively studied  in \cite{RS1, RS2, RS3, Si}. It turns out that the category of the (strongly) Gorenstein-projective modules is
closely related to the submodule category  \cite{LZ,XZ}. Motivated by the relation between submodule categories and the construction of Gorenstein-projective modules over triangular matrix rings, more general methods of constructing Gorenstein-projective modules over tensor algebras have been recently developed \hspace{1sp}\cite{Z, LZ2, LZ3, HLXZ, JL}, using monomorphism categories.
For instance,  we will use the following result:

\begin{thm}\cite[Theorem 4.1]{LZ3}\label{LZ3 thm}
Let $Q$ be an acyclic quiver, $I$ an monomial ideal of $kQ$ and $A$ a finite dimensional algebra over a field $k$. Let $\m=A\otimes_kkQ/I$, and $X=(X_i,X_\alpha, i\in Q_0, \alpha\in Q_1)$ be a $\m$-module. Then $X\in\Gp(\m)$ if and only if $X$ is separated monic and   $X_i/\bigoplus \limits_{\begin
{smallmatrix} \alpha\in Q_1\\ e(\alpha) = i \end{smallmatrix}} \Ima X_\alpha$ lies in $\Gp(A)$ for each $i\in Q_0$.
\end{thm}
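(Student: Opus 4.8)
The plan is to move complete projective resolutions between $\m$-mod and $A$-mod along the ``cokernel at a vertex'' functors $\pi_i\colon\m\text{-mod}\to A\text{-mod}$, $X\mapsto X_i/\bigoplus_{e(\alpha)=i}\Ima X_\alpha$. First I would collect four ingredients. (1) The indecomposable projective $\m$-modules are $P\otimes_k(kQ/I)e_i$ with $P$ an indecomposable projective $A$-module and $i\in Q_0$; using the path basis of $kQ/I$ and the acyclicity of $Q$ one computes $\pi_j(P\otimes_k(kQ/I)e_i)\cong P$ for $j=i$ and $0$ otherwise, so all $\pi_j$-values of a projective $\m$-module are projective $A$-modules. (2) Every projective $\m$-module, and every submodule of one, is separated monic; this is where the monomiality of $I$ enters (for non-monomial $I$ it already fails for projectives), and it shows that every $\m$-syzygy is separated monic. (3) Each $\pi_i$ is right exact, and it is \emph{exact} on short exact sequences of separated monic representations --- the kernel half uses that $X_p=0$ for a relation $p\in I$ together with the separatedness of the outer terms. (4) $\Hom_\m(P\otimes_k(kQ/I)e_i,\m)\cong\Hom_A(P,A)\otimes_k e_i(kQ/I)$ is a projective module over $\m^{\op}=A^{\op}\otimes_k kQ^{\op}/I^{\op}$ (again acyclic, monomial), and for the evident $\m^{\op}$-version $\pi_j^{\op}$ of $\pi_j$ one has $\pi_j^{\op}\circ\Hom_\m(-,\m)\cong\Hom_A(-,A)\circ\pi_j$ on projective $\m$-modules.

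Granting these, the forward implication goes as follows. Let $X\in\Gp(\m)$ with complete projective resolution $P^\bullet$ and $X=\Ker d^0$. By (2), $X$ and all the cycle modules of $P^\bullet$ are separated monic, so by (3) the complex $\pi_j(P^\bullet)$ is exact, and by (1) it is an exact complex of projective $A$-modules with $\pi_j(X)$ as its degree-zero cycle module. Moreover $\Hom_\m(P^\bullet,\m)$ is an exact complex of projective $\m^{\op}$-modules with separated monic cycle modules, so by (3) applying $\pi_j^{\op}$ preserves exactness, and by (4) the outcome is $\Hom_A(\pi_j(P^\bullet),A)$; hence that complex is exact, i.e.\ $\pi_j(X)\in\Gp(A)$.

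For the converse, given $X$ separated monic with $C_i:=\pi_i(X)\in\Gp(A)$, I would construct a complete $\m$-resolution of $X$. For the ``left half'', $\operatorname{top}X=\bigoplus_i\operatorname{top}_A(C_i)$ (placed at the respective vertices), so a projective cover of $X$ is $\bigoplus_i P_A(C_i)\otimes_k(kQ/I)e_i$; by (2) its kernel $\Omega_\m X$ is separated monic, and by (3) $\pi_i(\Omega_\m X)\cong\Omega_A(C_i)\in\Gp(A)$, so iterating gives a projective resolution of $X$ with all syzygies again of this form. For the ``right half'', I would pick for each $i$ an $A$-cosyzygy sequence $0\to C_i\to Q_i\to C_i'\to0$ with $Q_i$ projective, $C_i'\in\Gp(A)$, and $\Hom_A(-,A)$-exact, and assemble these --- using the structure maps of $X$ --- into a monomorphism $X\hookrightarrow\bigoplus_i Q_i\otimes_k(kQ/I)e_i$ into a projective $\m$-module whose cokernel is again separated monic with Gorenstein-projective $\pi$-values. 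Splicing the two halves yields an exact complex $T^\bullet$ of projective $\m$-modules with $X$ a cycle module, and I would then show $T^\bullet$ is totally acyclic by applying $\Hom_\m(-,\m)$ and invoking (4) together with the $\Hom_A(-,A)$-exactness of the chosen $A$-resolutions of the $C_i$ --- i.e.\ by running the forward computation backwards. This gives $X\in\Gp(\m)$.

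The hard part is the ``right half''. Producing the monomorphism of a separated monic $X$ into a projective $\m$-module with separated monic cokernel of Gorenstein-projective type, and --- more seriously --- verifying that the spliced complex is totally acyclic, is not formal: it requires in particular the vanishing $\Ext^{>0}_\m(X,\m)=0$ for all such $X$, and already for $Q\colon 1\to2$ a naive d\'evissage fails, since the $\Ext^1$ of the obvious subquotients of $X$ is nonzero and cancels only through connecting homomorphisms. One must therefore exploit the monomial hypothesis on $I$ to control the structure maps of projectives and of separated monic modules at the same time. (An alternative route is induction on $|Q_0|$: deleting a sink presents $\m$ as a triangular matrix ring $\left(\begin{smallmatrix}\m'&0\\ M&A\end{smallmatrix}\right)$ with $\m'=A\otimes_k kQ'/I'$ and $M$ a direct sum of quotients of projective right $\m'$-modules, and one invokes the known description of Gorenstein-projectives over triangular matrix rings --- whereupon the same difficulty resurfaces as the claim that $\operatorname{Tor}^{\m'}_i(M,X')=0$ for all $i>0$ when $X'$ is separated monic, which again rests on monomiality.)
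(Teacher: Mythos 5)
This statement is quoted from \cite[Theorem 4.1]{LZ3}; the present paper gives no proof of it, so your sketch can only be measured against the known argument there. Measured that way, it has a fatal flaw in the forward direction and an acknowledged hole in the converse.

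Your ingredient (2) is false once $I\neq 0$, and the forward implication collapses with it. Submodules of projective $\m$-modules need not be separated monic: take $A=k$, $Q\colon 3\xrightarrow{\beta}2\xrightarrow{\alpha}1$, $I=(\alpha\beta)$. Then $\rad P(3)\cong S(2)$ is a submodule of $P(3)$ --- in fact the first syzygy of $S(3)$ --- but $\Ker S(2)_\alpha=k$ while $K_\alpha=\{\beta\}$ and $\Ima S(2)_\beta=0$, so (m2) fails. Hence neither ``submodules of projectives are separated monic'' nor its corollary ``every $\m$-syzygy is separated monic'' is true, and you cannot conclude that the cycle modules of a complete resolution of $X\in\Gp(\m)$ are separated monic. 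This is not a repairable detail: the whole content of the implication ``Gorenstein-projective $\Rightarrow$ separated monic'' is that torsionlessness does \emph{not} suffice; one must use the $\Hom_\m(-,\m)$-exactness of the complete resolution (equivalently, the injectivity of the left $\Proj(\m)$-approximations), which in the triangular-matrix picture is exactly what forces $\phi$ to be a monomorphism in Theorem \ref{Z2 thm}. (In the converse direction your appeal to (2) is harmless: there both $P$ and $X$ already lie in $\smon(Q,I,A)$, so $\Omega_\m X\in\smon(Q,I,A)$ follows from closure under kernels of epimorphisms, Lemma \ref{ext close_lem}.)

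For the converse you explicitly leave open the ``right half'' --- the embedding of $X$ into a projective with separated monic Gorenstein cokernel and the total acyclicity of the spliced complex --- and that is precisely where the substance of \cite[Theorem 4.1]{LZ3} lies. One correction to your diagnosis, though: the d\'evissage for $\Ext^{>0}_\m(X,\m)=0$ does not fail; it only fails for the \emph{vertex-wise} filtration by the $X_i\otimes S(i)$. The right filtration is Theorem \ref{filt_thm}: $\smon(Q,I,\Gp(A))=\filt(\Gp(A)\otimes\Proj(kQ/I))$, and for a layer $X\otimes P$ the Cartan--Eilenberg isomorphism (Theorem \ref{CE iso}) gives $\Ext^m_\m(X\otimes P,\m)\cong\sum_{p+q=m}\Ext^p_A(X,A)\otimes\Ext^q_{kQ/I}(P,kQ/I)=0$ for $m>0$, since every summand has $p>0$ or $q>0$. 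That settles semi-Gorenstein-projectivity of separated monic modules with Gorenstein-projective $\cok_i$; the remaining reflexivity/right-half step still requires the inductive triangular-matrix argument (your parenthetical alternative), which you gesture at but do not carry out. As written, neither implication is proved.
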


This theorem suggests a correspondence between the class of Gorenstein-projective $A$-modules and the class of Gorenstein-projective $\m$-modules, given by $\Gp(A)\to\smon(Q,I,\Gp(A))$.
In fact, such a correspondence $\fX\mapsto \smon(Q,I,\fX)$ has also been studied in \cite{Z} and \cite{ZX} for $\fX=\leftidx{^\perp}T$ for some cotilting $A$-module $T$. The authors discovered a reciprocity of the separated monic representations: $\smon(Q,I,\leftidx{^\perp}T)= \leftidx{^\perp}(T\otimes kQ/I)$. Following by a well-known result from Auslander-Reiten (see Theorem \ref{cotilting induce}), it shows that $\fX\to \smon(Q,I,\fX)$ yields a correspondence between contravariantly finite resolving cotorsion classes $\fX$ satisfying $\widehat\fX=A$-mod and   contravariantly finite resolving cotorsion classes $\smon(Q,I, \fX)$ satisfying $\widehat {\smon(Q,I, \fX)} =\m$-mod.

 In a weaker sense, a module $M$ is said to be {\it semi-Gorenstein-projective} provided that $M\in \leftidx{^\perp A}=\{X|\Ext^i
(X, A) = 0$ for all $ i \geq 1\}$ \cite{RZ2}.
These modules have been called Cohen-Macaulay
modules \cite{B, Buch} or stable modules  \cite{AB}  in different contexts \cite{JS, Mar}.
All Gorenstein-projective modules are semi-Gorenstein-projective.
 Let $\Proj(A)$ be the full
subcategory of finitely generated projective $A$-modules,  $\Gp(A)$
the full subcategory of finitely generated Gorenstein-projective $A$-modules. Then $\Proj(A) \subseteq \Gp(A)\subseteq \leftidx{^\perp}A  $.
If $A$ is {\it a
Gorenstein algebra} (i.e., ${\rm inj.dim}\ _AA< \infty$ and ${\rm
inj.dim} \ A_A < \infty$) then \ $\Gp(A) = \leftidx{^\perp} A$
\cite{EJ2}. However, the converse  is {\it not} true.
In \cite{RZ2}, the artin algerba is called  {\it a  left (right) weakly Gorenstein algebra} provided $\Gp(A) = \ \leftidx{^\perp}\!A$
, i.e. any semi-Gorenstein-projective left (right) module is Gorenstein-projective. Note that the class of left weakly Gorenstein algebras is much larger than the class of Gorenstein algebras \cite[\S 1.9]{RZ2}. It was showed that torsionless finite algebras are left and right weakly Gorenstein \cite{Mar2}, which includes all representation finite type algebras, algebras stable equivalent to hereditary algebras and minimal representation infinite algebras \cite{Rin}.

In \cite{Mar2}, the author showed that every left weakly Gorenstein algebra satisfies the strong Nakayama conjecture, and therefore the  Nakayama conjecture. Thus  to prove the Nakayama conjecture, one needs to check that it holds for non-left weakly Gorenstein algebra.
However, the problems of constructing semi-Gorenstein-projective modules which are not Gorenstein-projective, or finding (non-)left weakly Gorenstein algebras are not easy. In 2006, the first example was found by Jorgensen and $\c{S}$ega \cite{JS}, about 40 years after the definition of Gorenstein-projective modules.
In 2012, similar to the construction of Gorenstein-projective modules over triangular matrix rings, Xiong and Zhang developed a method of consctructing semi-Gorenstein-projective modules over triangular matrix rings.

\begin{thm}\cite[Theorem 1.1]{XZ}\label{matrixring_thm}
Let $\m=\begin{bmatrix}B&_BM_A\\0&A\end{bmatrix}$ be an artin algebra. Assume that $\pd _BM<\infty$ and $D(M_A)\in(\leftidx{^\perp}A)^\perp$. Then $\begin{bmatrix}X\\Y\end{bmatrix}_\phi\in\leftidx{^\perp}\m$ if and only if $\phi^*:\Hom_B(X,B)\to \Hom_B(M\otimes_A Y,B)$ is an epimorphism, $\phi$ induces isomorphisms $\Ext^i_B(M\otimes_AY,B)\cong\Ext^i_B(X,B)$ for all $i\geq 1$ and $Y\in \leftidx{^\perp}A$.
\end{thm}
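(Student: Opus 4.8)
\medskip
\noindent\textbf{Proof proposal.} The plan is to reduce everything to $\Ext_\m$-computations against the two indecomposable-summand types of $\m$, and to compare these with $\Ext_B$ and $\Ext_A$ through adjoint functors. Identify $\m$-mod with the category of triples $\begin{bmatrix}X\\Y\end{bmatrix}_\phi$ ($X\in B$-mod, $Y\in A$-mod, $\phi\in\Hom_B(M\otimes_AY,X)$), and write $\begin{bmatrix}W\\0\end{bmatrix}$, $\begin{bmatrix}0\\V\end{bmatrix}$ and $T(Y):=\begin{bmatrix}M\otimes_AY\\Y\end{bmatrix}_{\rm id}$ for the evident functors into $\m$-mod. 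I will use freely: $\begin{bmatrix}-\\0\end{bmatrix}$ is exact and left adjoint to the exact projection $U_B$, so $\Ext^i_\m(\begin{bmatrix}W\\0\end{bmatrix},L)\cong\Ext^i_B(W,U_BL)$ for all $L$; $\begin{bmatrix}0\\-\end{bmatrix}$ is right adjoint to the exact projection $U_A$ (so $U_A$ preserves projectives), whence $\Ext^i_\m(L,\begin{bmatrix}0\\V\end{bmatrix})\cong\Ext^i_A(U_AL,V)$ for all $L$; $T$ is left adjoint to $U_A$; and as a left module $\m\cong\begin{bmatrix}B\\0\end{bmatrix}\oplus T(A)$, with $U_B\begin{bmatrix}B\\0\end{bmatrix}=B$, $U_BT(A)=M$, $U_A\begin{bmatrix}B\\0\end{bmatrix}=0$, $U_AT(A)=A$. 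Hence $N:=\begin{bmatrix}X\\Y\end{bmatrix}_\phi\in\leftidx{^\perp}\m$ if and only if $\Ext^i_\m(N,\begin{bmatrix}B\\0\end{bmatrix})=0=\Ext^i_\m(N,T(A))$ for all $i\ge1$. The structural exact sequences I use are $(\ast):\ 0\to\begin{bmatrix}X\\0\end{bmatrix}\to N\to\begin{bmatrix}0\\Y\end{bmatrix}\to0$, which for $N=T(Y)$ reads $0\to\begin{bmatrix}M\otimes_AY\\0\end{bmatrix}\to T(Y)\to\begin{bmatrix}0\\Y\end{bmatrix}\to0$ and is compatible with $(\ast)$ via the canonical map $T(Y)\xrightarrow{(\phi,\,{\rm id})}N$, together with $0\to\begin{bmatrix}M\\0\end{bmatrix}\to T(A)\to\begin{bmatrix}0\\A\end{bmatrix}\to0$.

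Next I would translate the hypotheses. The adjunction $\Ext^i_A(Z,D(M_A))\cong D\operatorname{Tor}^A_i(M,Z)$ shows $D(M_A)\in(\leftidx{^\perp}A)^\perp$ is equivalent to $\operatorname{Tor}^A_i(M,Z)=0$ for all $Z\in\leftidx{^\perp}A$ and $i\ge1$; and $\pd{}_BM=d<\infty$ gives $\pd_\m\begin{bmatrix}M\\0\end{bmatrix}\le d$, because $\begin{bmatrix}-\\0\end{bmatrix}$ is exact and carries a length-$d$ projective resolution of ${}_BM$ to one of $\begin{bmatrix}M\\0\end{bmatrix}$. The decisive consequence of the first hypothesis is a \emph{clean comparison}: if $Y\in\leftidx{^\perp}A$ then $\operatorname{Tor}^A_{\ge1}(M,Y)=0$, so applying $T$ to a projective $A$-resolution of $Y$ yields a projective $\m$-resolution of $T(Y)$; hence $\Ext^i_\m(T(Y),L)\cong\Ext^i_A(Y,U_AL)$ for every $L$, in particular $\Ext^i_\m(T(Y),\begin{bmatrix}B\\0\end{bmatrix})=0$ for all $i$ and $\Ext^i_\m(T(Y),T(A))\cong\Ext^i_A(Y,A)$.

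Assume $N\in\leftidx{^\perp}\m$. The first and crucial step is to obtain $Y\in\leftidx{^\perp}A$; this cannot use the clean comparison (which presupposes $Y\in\leftidx{^\perp}A$), and is exactly where $\pd{}_BM<\infty$ enters. Break a length-$d$ projective resolution of $\begin{bmatrix}M\\0\end{bmatrix}$ into short exact sequences and dimension-shift $\Hom_\m(N,-)$ through them; each syzygy has the form $\begin{bmatrix}P\\0\end{bmatrix}$ with ${}_BP$ projective, hence is a summand of $\m^n$, so all its higher $\Ext_\m(N,-)$ vanish since $N\in\leftidx{^\perp}\m$, and after $d$ steps $\Ext^{\ge1}_\m(N,\begin{bmatrix}M\\0\end{bmatrix})=0$. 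Feeding this, $\Ext^{\ge1}_\m(N,T(A))=0$, and $\Ext^i_\m(N,\begin{bmatrix}0\\A\end{bmatrix})\cong\Ext^i_A(Y,A)$ into $\Hom_\m(N,-)$ applied to $0\to\begin{bmatrix}M\\0\end{bmatrix}\to T(A)\to\begin{bmatrix}0\\A\end{bmatrix}\to0$ forces $\Ext^{\ge1}_A(Y,A)=0$, i.e.\ $Y\in\leftidx{^\perp}A$. Now the clean comparison applies: $\Ext^i_\m(T(Y),\begin{bmatrix}B\\0\end{bmatrix})=0$, so the sequence for $T(Y)$ identifies $\Ext^i_\m(\begin{bmatrix}0\\Y\end{bmatrix},\begin{bmatrix}B\\0\end{bmatrix})$ with $\Ext^{i-1}_B(M\otimes_AY,B)$ for $i\ge1$ (and with $0$ for $i=0$). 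Inserting this into the long exact sequence of $(\ast)$ against $\begin{bmatrix}B\\0\end{bmatrix}$ and, by naturality of connecting maps with respect to $(\phi,{\rm id})$, identifying the connecting homomorphism $\Ext^i_B(X,B)\to\Ext^i_B(M\otimes_AY,B)$ with $\Ext^i_B(\phi,B)$, one gets short exact sequences
\[
0\longrightarrow\cok\Ext^{i-1}_B(\phi,B)\longrightarrow\Ext^i_\m(N,\begin{bmatrix}B\\0\end{bmatrix})\longrightarrow\Ker\Ext^i_B(\phi,B)\longrightarrow0\qquad(i\ge1).
\]
As $\begin{bmatrix}B\\0\end{bmatrix}$ is a summand of $\m$ the left-hand terms vanish, which says precisely that $\Hom_B(\phi,B)$ is an epimorphism and $\Ext^i_B(\phi,B)$ an isomorphism for all $i\ge1$ --- the stated $\phi$-conditions.

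For the converse, assume the $\phi$-conditions and $Y\in\leftidx{^\perp}A$. The crux is that the $\phi$-conditions, which only concern $\Ext_B(-,B)$, force the same with coefficients in $M$: since $\pd{}_BM=d<\infty$, running the five lemma on the natural transformation $\Ext^\bullet_B(\phi,-)$ up a length-$d$ projective resolution of ${}_BM$ --- at each stage the relevant $\Ext^\bullet_B(\phi,P)$ being an isomorphism/epimorphism as a summand of a power of $\Ext^\bullet_B(\phi,B)$ --- shows $\Hom_B(\phi,M)$ is an epimorphism and $\Ext^i_B(\phi,M)$ an isomorphism for $i\ge1$. Then apply $\Hom_\m(-,\begin{bmatrix}B\\0\end{bmatrix})$ and $\Hom_\m(-,T(A))$ to $(\ast)$: by the clean comparison the $\begin{bmatrix}0\\Y\end{bmatrix}$-terms are controlled by $\Ext^\bullet_\m(T(Y),\begin{bmatrix}B\\0\end{bmatrix})=0$ and $\Ext^{\ge1}_\m(T(Y),T(A))=\Ext^{\ge1}_A(Y,A)=0$, the $\begin{bmatrix}X\\0\end{bmatrix}$-terms are $\Ext^\bullet_B(X,B)$ and $\Ext^\bullet_B(X,M)$, and --- identifying connecting maps as before with $\Ext^\bullet_B(\phi,B)$, resp.\ $\Ext^\bullet_B(\phi,M)$ --- the two established sets of isomorphism/epimorphism statements kill all the relevant subquotients, so $\Ext^{\ge1}_\m(N,\begin{bmatrix}B\\0\end{bmatrix})=0=\Ext^{\ge1}_\m(N,T(A))$, i.e.\ $N\in\leftidx{^\perp}\m$. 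The two genuinely non-formal steps --- and the only places where the hypotheses are essential --- are (a) extracting $Y\in\leftidx{^\perp}A$ \emph{before} the clean comparison is available (handled by $\pd{}_BM<\infty$ via dimension shifting on $\begin{bmatrix}M\\0\end{bmatrix}$), and (b) upgrading the $\phi$-conditions from coefficients $B$ to coefficients $M$ (again handled by $\pd{}_BM<\infty$); the identification of the connecting homomorphisms of $(\ast)$ with the maps induced by $\phi$ is routine but must be carried out with care.
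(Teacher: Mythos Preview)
The paper does not prove this statement: Theorem~\ref{matrixring_thm} is quoted from \cite[Theorem~1.1]{XZ} as background and is used later (in the proof of Lemma~\ref{matrixring lwg_lem}) without any argument being given here. So there is no ``paper's own proof'' to compare your proposal against.

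That said, your argument is a correct and self-contained proof of the cited result. The strategy---decompose $_\m\m\cong\begin{bmatrix}B\\0\end{bmatrix}\oplus T(A)$, analyse $\Ext_\m^{\ge1}(N,-)$ against each summand via the canonical sequence $(\ast)$ and its comparison with the $T(Y)$-sequence, and translate the hypotheses as $\operatorname{Tor}^A_{\ge1}(M,\leftidx{^\perp}A)=0$ and $\pd_\m\begin{bmatrix}M\\0\end{bmatrix}<\infty$---is exactly the standard one, and is in fact the approach taken in \cite{XZ}. Your two flagged ``non-formal'' steps are indeed the substantive ones: (a) obtaining $Y\in\leftidx{^\perp}A$ by dimension-shifting along a finite projective resolution of $\begin{bmatrix}M\\0\end{bmatrix}$ before the clean comparison is available, and (b) propagating the $\phi$-conditions from coefficients $B$ to coefficients $M$ via the same finite resolution. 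The identification of the connecting homomorphism of $(\ast)$ with $\Ext^\bullet_B(\phi,B)$ through the naturality square against $(\phi,\mathrm{id}):T(Y)\to N$ is also carried out correctly; it is worth recording explicitly that $\Ext^i_\m(T(Y),\begin{bmatrix}B\\0\end{bmatrix})$ vanishes for \emph{all} $i\ge0$ (the case $i=0$ by the adjunction $T\dashv U_A$ alone), so that the connecting map of the $T(Y)$-sequence really is an isomorphism in every degree.
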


So far, we have limited knowledge about determining the left weakly Gorensteinness of an algebra.
In \cite[Theorem 1.3]{RZ2}, the authors proved  that if the number of isomorphism classes of indecomposable left $A$-modules which are both semi-Gorenstein-projective and torsionless is finite, then $A$ is left weakly Gorenstein. In a recent paper \cite {Z3}, the author showed the following result for determining whether a triangular matrix ring is left weakly Gorenstein.

\begin{thm}\cite[Proposition 4.1]{Z3}\label{Z3 thm}
Let $A$, $B$ be artin algebras and $\m=\begin{bmatrix}B&_BM_A\\0&A\end{bmatrix}$.  Assume that $\pd _BM<\infty$, $\pd M_A<\infty$ and $D(M_A)\in(\leftidx{^\perp}A)^\perp$. Then $\m$ is left weakly Gorenstein if and only if each semi-Gorenstein-projective $\m$-module is monic and both $A$ and $B$ are left weakly Gorenstein.
\end{thm}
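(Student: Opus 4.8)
The plan is to combine Theorem \ref{matrixring_thm}, the description of the semi-Gorenstein-projective $\m$-modules, with the known description of $\Gp(\m)$ for a triangular matrix algebra (cf.~\cite{LZ,XZ}): under the standing hypotheses $\pd _BM<\infty$ and $\pd M_A<\infty$, a $\m$-module $\begin{bmatrix}X\\Y\end{bmatrix}_\phi$ lies in $\Gp(\m)$ if and only if $\phi$ is a monomorphism, $\cok\phi\in\Gp(B)$ and $Y\in\Gp(A)$. (Finiteness of $\pd M_A$ is what makes $\operatorname{Tor}^A_{>0}(M,Y)=0$ for every $Y\in\Gp(A)$: reduce along short exact sequences $0\to Y\to P\to Y'\to 0$ with $P$ projective and $Y'\in\Gp(A)$ until the homological degree exceeds $\pd M_A$; so $M\otimes_A-$ carries a complete resolution of $Y$ to an exact complex and the usual gluing of complete resolutions over $\m$ applies.) Since $\Gp(\m)\subseteq\leftidx{^\perp}\m$ always holds, ``$\m$ is left weakly Gorenstein'' means precisely that this inclusion is an equality, so the whole proof amounts to transporting membership in these two subcategories through the two descriptions.

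I would first dispatch the ``only if'' direction. Assume $\Gp(\m)=\leftidx{^\perp}\m$. By the $\Gp(\m)$-description every Gorenstein-projective $\m$-module is monic, so every semi-Gorenstein-projective $\m$-module is monic. For the left weak Gorensteinness of $A$: given $Y\in(\leftidx{^\perp}A)$, the module $\begin{bmatrix}M\otimes_AY\\Y\end{bmatrix}_{\mathrm{id}}$ has an isomorphism as structure map, so the two $\Ext$-conditions of Theorem \ref{matrixring_thm} hold trivially and $Y\in(\leftidx{^\perp}A)$ is assumed; hence it lies in $\leftidx{^\perp}\m$, which is $\Gp(\m)$, and the $\Gp(\m)$-description then forces $Y\in\Gp(A)$. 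Similarly, for $Z\in(\leftidx{^\perp}B)$ the module $\begin{bmatrix}Z\\0\end{bmatrix}_0$ satisfies all hypotheses of Theorem \ref{matrixring_thm} (its ``$M\otimes_AY$'' is $0$ and $\Ext^{\ge1}_B(Z,B)=0$), hence lies in $\leftidx{^\perp}\m$, i.e.\ in $\Gp(\m)$, so the $\Gp(\m)$-description gives $Z\in\Gp(B)$. Thus both $A$ and $B$ are left weakly Gorenstein.

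The ``if'' direction carries the content. Let $\begin{bmatrix}X\\Y\end{bmatrix}_\phi\in\leftidx{^\perp}\m$. By hypothesis $\phi$ is monic, and Theorem \ref{matrixring_thm} gives $Y\in(\leftidx{^\perp}A)$, surjectivity of $\phi^{*}:\Hom_B(X,B)\to\Hom_B(M\otimes_AY,B)$, and that $\Ext^i_B(\phi,B):\Ext^i_B(X,B)\to\Ext^i_B(M\otimes_AY,B)$ is an isomorphism for every $i\ge1$. Apply $\Hom_B(-,B)$ to $0\to M\otimes_AY\xrightarrow{\phi}X\to\cok\phi\to 0$. Surjectivity of $\phi^{*}$ kills the connecting map $\Hom_B(M\otimes_AY,B)\to\Ext^1_B(\cok\phi,B)$, so $\Ext^1_B(\cok\phi,B)$ injects into $\Ext^1_B(X,B)$; post-composing this injection with the isomorphism $\Ext^1_B(\phi,B)$ gives the zero map (consecutive arrows in a long exact sequence), whence $\Ext^1_B(\cok\phi,B)=0$. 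One step further along the sequence, for $i\ge2$: $\Ext^{i-1}_B(\phi,B)$ is an isomorphism, hence surjective, so the connecting map into $\Ext^i_B(\cok\phi,B)$ vanishes, $\Ext^i_B(\cok\phi,B)$ injects into $\Ext^i_B(X,B)$, and post-composing with the isomorphism $\Ext^i_B(\phi,B)$ again gives zero, so $\Ext^i_B(\cok\phi,B)=0$. Hence $\cok\phi\in(\leftidx{^\perp}B)$, which equals $\Gp(B)$ since $B$ is left weakly Gorenstein, while $Y\in(\leftidx{^\perp}A)=\Gp(A)$ since $A$ is. Now $\phi$ monic, $\cok\phi\in\Gp(B)$ and $Y\in\Gp(A)$ place $\begin{bmatrix}X\\Y\end{bmatrix}_\phi$ in $\Gp(\m)$ by the $\Gp(\m)$-description, so $\leftidx{^\perp}\m$ is contained in $\Gp(\m)$ and $\m$ is left weakly Gorenstein.

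I expect the main obstacle to be pinning down the exact statement and hypotheses of the $\Gp(\m)$-description for a genuine bimodule $M$ (as opposed to $M={}_AA_A$): one must know that finiteness of $\pd _BM$ and $\pd M_A$ alone makes ``$\phi$ monic, $\cok\phi\in\Gp(B)$, $Y\in\Gp(A)$'' equivalent to $\begin{bmatrix}X\\Y\end{bmatrix}_\phi\in\Gp(\m)$, with no residual $\operatorname{Tor}$-condition surviving once $Y$ is Gorenstein-projective. A second, lighter point is checking that the two test modules above genuinely lie in $\leftidx{^\perp}\m$ --- which is precisely what the hypothesis $D(M_A)\in(\leftidx{^\perp}A)^\perp$ of Theorem \ref{matrixring_thm} guarantees. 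Granting those inputs, the proof is the long-exact-sequence bookkeeping and the two test-module constructions above.
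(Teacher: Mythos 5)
Your proof is correct. Be aware, though, that the paper does not actually prove this statement: it is imported verbatim from \cite{Z3} (Proposition 4.1 there), so there is no in-paper proof to match against. That said, your argument runs on exactly the machinery the paper itself uses to prove its strengthening of this result (Lemma \ref{matrixring lwg_lem} and Proposition \ref{matrixring lwg_prop}): Theorem \ref{matrixring_thm} to characterize $\leftidx{^\perp}\m$, Theorem \ref{Z2 thm} to characterize $\Gp(\m)$ (with compatibility of $M$ supplied by $\pd_BM<\infty$ and $\pd M_A<\infty$, as the paper notes via \cite[Proposition 1.3(1)]{Z2}), test modules of the form $\bigl[\begin{smallmatrix}M\otimes_AY\\ Y\end{smallmatrix}\bigr]_{\mathrm{id}}$ and $\bigl[\begin{smallmatrix}Z\\ 0\end{smallmatrix}\bigr]_0$ for the ``only if'' direction, and the long exact sequence of $\Hom_B(-,B)$ applied to $0\to M\otimes_AY\to X\to\cok\phi\to 0$ for the ``if'' direction. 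The one structural difference worth noting: in the paper's Lemma \ref{matrixring lwg_lem} the monomorphy of $\phi$ is not assumed but must be \emph{derived} (via a pullback along the projective cover of $X$ and the fact that $(-)^*$ is a duality on $\Gp(B)$), which is the genuinely hard step there; your hypothesis hands you monicity for free, and that is precisely what lets your long-exact-sequence bookkeeping read off $\Ext^i_B(\cok\phi,B)=0$ directly.
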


In this paper, we will use  Theorem \ref{matrixring_thm} to characterize semi-Gorenstein-projective modules over certain triangular matrix algebras. It turns out for the triangular matrix ring $\m$ satisfying the assumptions in Theorem \ref{matrixring_thm}, if $A$ and $B$ are left weakly Gorenstein, then each semi-Gorenstein-projective $\m$-module is monic (see Lemma \ref{matrixring lwg_lem}). Hence, we can improve Theorem \ref{Z3 thm} by dropping the condition ``each semi-Gorenstein-projective $\m$-module is monic'' in the ``if part''. As an application, we show that for $\m=A\otimes kQ/I$,  $A$ is left weakly Gorenstein if and only if so is $\m$, which leads to a method of creating new left weakly Gorenstein, or non-left weakly Gorenstein algebras from the algebra $A$.

\vskip10pt

\subsection{Outline of the paper}

\begingroup
\setcounter{thm}{0}
\renewcommand\thethm{\Alph{thm}}

In section 2, we recall some preliminaries of cotorsion pairs and resolving subcategories. In section 3, we give a brief introduction to separated monic representations over a subcategory $\fX$ and their filtration interpretations. Our study of two adjoint pairs $(\cok_i, -\otimes S(i))$ and $(-\otimes P(i), (-)_i)$ provides a main tool (Lemma \ref{adj ext_lem}) for the rest of the paper.

 In section 4, we will study the separated monic correspondence
$\fX\to\smon(Q,I,\fX)$ for (contravariantly finite) resolving cotorsion classes $\fX$, and show the following theorem:

\begin{thm}\label{thm A}  Let $A$ be
a finite dimensional algebra, $Q$ a finite acyclic quiver, $I$ an
admissible ideal of the path algebra $kQ$ generated by monomial relations and $\m=A\otimes_kkQ/I$.
  Then
$(\fX,\fY)$ is a hereditary cotorsion pair in $A$-mod if and only if $({\rm smon}(Q, I,\fX ),{\rm rep}(Q,I,\fY))$ is a hereditary cotorsion pair in $\m$-mod.
Furthermore, $(\fX,\fY)$ is a complete hereditary cotorsion pair if and only if so is $({\rm smon}(Q, I,\fX ),{\rm rep}(Q,I,\fY))$.
\end{thm}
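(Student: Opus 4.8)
The plan is to establish the equivalence in two layers: first at the level of $\Ext^1$-orthogonality (the cotorsion pair axioms), and then upgrade to the hereditary and completeness statements using the characterization of hereditary cotorsion pairs via resolving/coresolving subcategories recalled in Section 2. The essential computational engine will be Lemma \ref{adj ext_lem}, coming from the two adjoint pairs $(\cok_i,-\otimes S(i))$ and $(-\otimes P(i),(-)_i)$, which lets one transfer $\Ext$-groups between $\m$-mod and $A$-mod along the vertex functors; I would use it to reduce every orthogonality computation over $\m$ to a finite collection of orthogonality computations over $A$ indexed by the vertices of $Q$.

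First I would prove the ``only if'' direction. Assume $(\fX,\fY)$ is a hereditary cotorsion pair in $A$-mod. I must show (a) $\smon(Q,I,\fX)={}^\perp_1 \rep(Q,I,\fY)$ and (b) $\rep(Q,I,\fY)=\rep(Q,I,\fY)^{\perp_1}{}$, i.e. each of the two classes is exactly the left (resp. right) $\Ext^1$-orthogonal of the other inside $\m$-mod. For the inclusion $\smon(Q,I,\fX)\subseteq {}^\perp_1\rep(Q,I,\fY)$: given $X$ separated monic with all the ``cokernel at vertex $i$'' pieces $\cok_i X$ in $\fX$, and given $Y$ a representation with all $Y_i\in\fY$, I would filter $X$ by the standard separated-monic filtration (Section 3) whose subquotients are of the form $(\cok_i X)\otimes S(i)$ or $(\cdots)\otimes P(i)$, and dually understand $Y$ via the adjoint pairs, so that $\Ext^1_\m(X,Y)$ is controlled by $\Ext^1_A(\cok_i X, Y_i)=0$. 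For the reverse inclusion one shows that any $\m$-module in ${}^\perp_1\rep(Q,I,\fY)$ must first be separated monic — testing against representations concentrated at a vertex with injective/projective-type entries forces the monomorphism and separatedness conditions — and then that its cokernel pieces land in ${}^\perp_1\fY=\fX$. The computation of $\rep(Q,I,\fY)^{\perp_1}{}$ and the identity $\rep(Q,I,\fY)= \smon(Q,I,\fX)^{\perp_1}{}$ is the symmetric statement, again reduced vertex-by-vertex via Lemma \ref{adj ext_lem}; here one uses that $(-)_i$ has exact adjoints so that $\rep(Q,I,-)$ behaves well on the $\fY$-side.

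For the ``if'' direction I would run the same vertex-reduction in reverse: if $(\smon(Q,I,\fX),\rep(Q,I,\fY))$ is a cotorsion pair in $\m$-mod, then applying the functors $-\otimes P(i)$ and $-\otimes S(i)$ (which are exact or have the relevant exactness), together with the fact that they send $\fX$ into $\smon(Q,I,\fX)$ and detect membership, one recovers $\fX={}^\perp_1\fY$ and $\fY=\fX^{\perp_1}$ in $A$-mod. The hereditary part: recall that a cotorsion pair is hereditary iff $\fX$ is resolving iff $\fY$ is coresolving (Section 2). I would show $\smon(Q,I,\fX)$ is resolving in $\m$-mod by checking it contains the projective $\m$-modules (which are known to be separated monic with projective cokernel pieces), is closed under extensions (stable under the filtration machinery), and is closed under kernels of epimorphisms between its objects — the last using the horseshoe-type behaviour of the $\cok_i$ functors on short exact sequences of separated monic modules plus the resolving property of $\fX$ downstairs. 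Dually $\rep(Q,I,\fY)$ is coresolving because $\fY$ is. Finally, for completeness: by Section 2 it suffices to produce, for every $\m$-module $M$, a special $\smon(Q,I,\fX)$-precover (equivalently a special $\rep(Q,I,\fY)$-preenvelope). I would build this by taking, at each vertex, the given special $\fX$-precover / $\fY$-preenvelope of the relevant $A$-module (these exist since $(\fX,\fY)$ is complete), and assembling them along $Q$ using the adjoint pairs, checking that the total map is again special with kernel/cokernel in the correct class — the monomial-relation hypothesis on $I$ is what guarantees this assembly stays inside the representation category.

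The main obstacle I expect is the reverse inclusion in the ``only if'' direction, namely proving that membership in ${}^\perp_1\rep(Q,I,\fY)$ already forces an $\m$-module to be separated monic: the separatedness condition (that the maps $X_\alpha$ have images forming a direct sum, not just that each is mono) is a genuinely $\m$-module-theoretic constraint, and extracting it purely from $\Ext^1$-vanishing against representations with entries in $\fY$ requires choosing the test modules cleverly — presumably representations supported on subquivers/arrows with the right entries so that a nonsplit extension appears precisely when separatedness fails. The completeness assembly is technically the longest part, but conceptually routine once the filtration description and Lemma \ref{adj ext_lem} are in hand.
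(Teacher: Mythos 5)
Your overall architecture does match the paper's: reduce all $\Ext$-computations to the vertices via the two adjoint pairs (Lemma \ref{adj ext_lem}), use the filtration descriptions $\smon(Q,I,\fX)=\filt(\fX\otimes\Proj(kQ/I))$ and $\rep(Q,I,\fY)=\filt(\fY\otimes\mathcal S(kQ/I))$ for the easy inclusions, characterize hereditariness via resolving/coresolving (Lemmas \ref{res perp_lem} and \ref{ext close_lem}), and reduce completeness to approximating generators. But the step you yourself single out as the main obstacle is a genuine gap as written: you never produce the argument that a module in $\leftidx{^\perp}\rep(Q,I,\fY)$ must be separated monic, and the ``cleverly chosen test modules'' are not constructed. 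The paper closes this in one line: since $\fY$ is coresolving it contains $\add(DA)$, hence $\leftidx{^\perp}\rep(Q,I,\fY)\subseteq\leftidx{^\perp}(DA\otimes kQ/I)$, and the latter equals $\smon(Q,I,A)$ by the reciprocity theorem of Zhang--Xiong (Theorem \ref{smon corr_thm} applied to the cotilting module $T=DA$; Corollary \ref{smon corr_cor}). Without importing (or reproving) that reciprocity, your plan does not establish $\leftidx{^\perp}\rep(Q,I,\fY)\subseteq\smon(Q,I,\fX)$; note also that this is exactly where full $\Ext^{\geq 1}$-orthogonality, rather than just $\Ext^1$-orthogonality, is being used. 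Once separated monicity is secured, the cokernel pieces land in $\leftidx{^\perp}\fY=\fX$ by Lemma \ref{adj ext_lem}(1), as you say.

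Two smaller divergences are worth flagging. For the converse direction the paper does not need a general ``detection'' principle for $-\otimes P(i)$ and $-\otimes S(i)$: it picks a \emph{sink} vertex $1$, shows $M\otimes P(1)\in\leftidx{^\perp}(\smon(Q,I,\fX)^\perp)=\smon(Q,I,\fX)$ for $M\in\leftidx{^\perp}(\fX^\perp)$, and recovers $M=\cok_1(M\otimes P(1))\in\fX$; your sketch should be made this concrete. For completeness, your plan of assembling vertex-wise special precovers ``along $Q$'' is riskier than what is needed: the paper only approximates the filtration generators $M\otimes P(i)$, by tensoring a minimal right $\fX$-approximation $X\to M$ (whose kernel lies in $\fX^\perp$ by Wakamatsu's Lemma) with $P(i)$, then invokes Corollary \ref{simple-app_cor}, Lemma \ref{contra transitive_lem}, and the functorial finiteness of $\smon(Q,I,A)$ itself (Corollary \ref{smon corr_cor}); the converse applies the exact functor $(-)_i$ to an approximation of $M\otimes P(i)$. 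This generator reduction avoids the horseshoe-type compatibility checks your assembly would require.
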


 In summary, the correspondence $\fX \to {\rm smon}(Q, I, \fX )$ preserves the following subcategories:
(a) \{resolving subcategories\} (b)\{resolving cotorsion classes\} (c)\{contravariantly finite resolving subcategories\} (d)\{cotorsion classes $\fX$, with $\hat \fX=A$-mod\}.

In section 5, we study the separated monic representation of the subcategory of semi-Gorenstein-projective $A$-modules and compare it with the subcategory of semi-Gorenstein-projective $\m$-modules. We show the equality $\smon(Q,I,\leftidx{^\perp}A)=\leftidx{^\perp}\m$ holds when $A$ is left weakly Gorenstein.

\begin{thm}\label{thm C} Let $A$ be
a finite dimensional algebra, $Q$ a finite acyclic quiver, $I$ an
admissible ideal of the path algebra $kQ$ generated by monomial relations and $\m=A\otimes_kkQ/I$. Then $A$ is a left weakly Gorenstein algebra if and only if so is $\m$.  Therefore $\smon(Q,I,\leftidx{^\perp}A)=\leftidx{^\perp}\m$ whenever $A$ or $\m$ is left weakly Gorenstein.
\end{thm}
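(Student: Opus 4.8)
The plan is to exhibit $\m=A\otimes_k kQ/I$ as an iterated triangular matrix algebra and to induct on $|Q_0|$, the inductive step resting on an improved form of Theorem~\ref{Z3 thm}. Since $Q$ is finite and acyclic it has a source $v$; set $Q'=Q\setminus\{v\}$, let $I'$ be the induced monomial admissible ideal of $kQ'$ (so that $kQ'/I'\cong(1-e_v)(kQ/I)(1-e_v)$), and put $\m'=A\otimes_k kQ'/I'$ and $M_0=(1-e_v)(kQ/I)e_v$, the latter regarded as a finite-dimensional left $kQ'/I'$-module (concretely, the radical of the indecomposable projective $kQ/I$-module at $v$). Because $v$ is a source, $kQ/I\cong\left[\begin{smallmatrix}kQ'/I' & M_0\\ 0 & k\end{smallmatrix}\right]$, and applying $A\otimes_k(-)$ gives a ring isomorphism
$$\m\;\cong\;\begin{bmatrix}\m' & N\\ 0 & A\end{bmatrix},\qquad N:=A\otimes_k M_0\;\cong\;\m'\otimes_{kQ'/I'}M_0,$$
in which $N$ is an $\m'$-$A$-bimodule.

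To apply Theorem~\ref{Z3 thm} to this decomposition (with $B=\m'$, the role of ``$A$'' played by $A$, and that of ``$M$'' by $N$), I would check its three hypotheses. As a right $A$-module $N\cong A_A^{d}$ is free, where $d=\dim_k M_0$; hence $\pd N_A=0$, and $D(N_A)\cong D(A_A)^{d}$ is an injective left $A$-module, so that $D(N_A)\in(\leftidx{^\perp}A)^\perp$ because $\Ext^i_A(-,J)$ vanishes for every injective $J$ and every $i\ge1$. Moreover $\m'=A\otimes_k kQ'/I'$ is free as a right $kQ'/I'$-module, so the functor $\m'\otimes_{kQ'/I'}(-)$ is exact and carries projectives to projectives; this gives $\pd_{\m'}N\le\pd_{kQ'/I'}M_0$, and the latter is finite because a monomial algebra over a finite acyclic quiver has finite global dimension. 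By the improved form of Theorem~\ref{Z3 thm}, in which the clause ``every semi-Gorenstein-projective $\m$-module is monic'' is removed from the ``if'' direction by means of Lemma~\ref{matrixring lwg_lem}, we conclude that $\m$ is left weakly Gorenstein if and only if both $\m'$ and $A$ are.

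The first assertion now follows by induction on $|Q_0|$. If $|Q_0|=1$, acyclicity forces $Q_1=\emptyset$, hence $kQ=k$, $I=0$ and $\m=A$, and there is nothing to prove. In the inductive step $\m'=A\otimes_k kQ'/I'$ with $|Q'_0|=|Q_0|-1$, so by induction $\m'$ is left weakly Gorenstein if and only if $A$ is; together with the previous paragraph this shows that $\m$ is left weakly Gorenstein if and only if $A$ is. For the remaining assertion, assume $A$ (equivalently, by the above, $\m$) is left weakly Gorenstein; then $\leftidx{^\perp}A=\Gp(A)$ and $\leftidx{^\perp}\m=\Gp(\m)$. By Theorem~\ref{LZ3 thm}, $\Gp(\m)$ is exactly the class of separated monic representations $X$ with $X_i/\bigoplus_{e(\alpha)=i}\Ima X_\alpha\in\Gp(A)$ for all $i\in Q_0$, that is, $\Gp(\m)=\smon(Q,I,\Gp(A))$; hence $\leftidx{^\perp}\m=\Gp(\m)=\smon(Q,I,\Gp(A))=\smon(Q,I,\leftidx{^\perp}A)$.

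The step I expect to be the main obstacle is the verification of the hypotheses of Theorem~\ref{Z3 thm} for the chosen decomposition. It is what dictates peeling off a source of $Q$ rather than a sink: only then is $N$ free on the right over $A$, which makes both $\pd N_A<\infty$ and $D(N_A)\in(\leftidx{^\perp}A)^\perp$ automatic; and it is here that the finiteness of the global dimension of the monomial algebra $kQ'/I'$ is used. The other non-formal input, namely that if $A$ and $\m'$ are left weakly Gorenstein then every semi-Gorenstein-projective $\m$-module is automatically monic (the content of Lemma~\ref{matrixring lwg_lem}), is what allows the induction to proceed without re-checking monicity at each stage.
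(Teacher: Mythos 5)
Your proposal is correct and follows essentially the same route as the paper: peel off a source vertex to write $\m$ as the triangular matrix ring $\left[\begin{smallmatrix}\m' & A\otimes\rad P(v)\\ 0 & A\end{smallmatrix}\right]$, verify the hypotheses of the improved version of Theorem~\ref{Z3 thm} (the paper's Proposition~\ref{matrixring lwg_prop}, obtained via Lemma~\ref{matrixring lwg_lem}) exactly as you do, and induct on $|Q_0|$, with the final equality $\smon(Q,I,\leftidx{^\perp}A)=\leftidx{^\perp}\m$ coming from Theorem~\ref{LZ3 thm}. The only cosmetic difference is that the paper computes $\pd_{\m'}(A\otimes\rad P(v))$ via the Cartan--Eilenberg/K\"unneth formula (Corollary~\ref{CE cor}) rather than via exactness of $\m'\otimes_{kQ'/I'}(-)$, but both give the needed finiteness.
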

\endgroup

We also show when $kQ/I$ is not semisimple, the equality $\smon(Q,I,\leftidx{^\perp}A)=\leftidx{^\perp}\m$ holds only if $A$ is left weakly Gorenstein. Together with Theorem \ref{thm C}, this gives another characterization of left weakly Gorensteinness.

 In section 6, we show an example of non-CM-finite left weakly Gorenstein algebra using Theorem \ref{thm C}. We point out that this provides a method to construct new classes of left weakly Gorenstein as well as non-left weakly Gorenstein algebras.

\section{Resolving subcategories and cotorsion pairs}
In this section, we recall some facts about cotorsion pairs. Refer to \cite{AR1,R} for more details.

Let $\Gamma$ be an artin algebra, a full subcategory $\fX$ in $\Gamma$-mod is called {\it resolving} if $\fX$ satisfies (1) closed under extensions and direct summands; (2) contains projective $\Gamma$-modules; (3) closed under kernels of epimorphisms. Dually the {\it coresolving} subcategories are defined.

Denote by $\fX^\perp:=\bigcap\limits_{i=1}^\infty\ker\Ext^i(\fX,-)$ and $\leftidx{^\perp}\!\fY:=\bigcap\limits_{i=1}^\infty\ker\Ext^i(-,\fY)$. By dimension shift, one can observe that:

\begin{lem}\cite[Lemma 3.1]{AR1}\label{res perp_lem}
\begin{enumerate}
\item If $\fX$ is a resolving subcategory in $\Gamma$-mod, then $\fX^\perp=\ker\Ext^1(\fX,-)$ is coresolving.
\item If $\fY$ is a coresolving subcategory in $\Gamma$-mod, then $\leftidx{^\perp}\!\fY=\ker\Ext^1(-,\fY)$ is resolving.
\end{enumerate}
\end{lem}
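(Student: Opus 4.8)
The plan is to prove part (1) in full and obtain part (2) by the evident duality, the engine in both cases being the dimension-shift isomorphism. For (1), I would first record the trivial inclusion $\fX^\perp\subseteq\ker\Ext^1(\fX,-)$. For the converse, fix a module $M$ with $\Ext^1(X,M)=0$ for every $X\in\fX$, and fix $X\in\fX$; choose a short exact sequence $0\to\Omega X\to P\to X\to 0$ with $P$ projective. The resolving hypothesis on $\fX$ (it contains the projectives and is closed under kernels of epimorphisms) forces $\Omega X\in\fX$, and then the same applies to all higher syzygies $\Omega^{j}X$. Applying $\Hom(-,M)$ to the sequence gives $\Ext^{i+1}(X,M)\cong\Ext^{i}(\Omega X,M)$ for all $i\geq 1$; iterating this yields $\Ext^{i}(X,M)\cong\Ext^{1}(\Omega^{\,i-1}X,M)=0$ for every $i\geq 1$, since $\Omega^{\,i-1}X\in\fX$. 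Hence $M\in\fX^\perp$, so the two subcategories coincide.

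It then remains to check that the common subcategory $\fX^\perp=\ker\Ext^1(\fX,-)$ is coresolving. Closure under direct summands is immediate from additivity of $\Ext^{i}(X,-)$, and the injective modules lie in it since $\Ext^{i}(X,I)=0$ for $I$ injective and $i\geq 1$. For closure under extensions and under cokernels of monomorphisms, I would feed a short exact sequence $0\to M'\to M\to M''\to 0$ into the long exact sequence of $\Ext(X,-)$ for each $X\in\fX$: when $M',M''\in\fX^\perp$ the segment $\Ext^{i}(X,M')\to\Ext^{i}(X,M)\to\Ext^{i}(X,M'')$ has vanishing ends for every $i\geq 1$, so $\Ext^{i}(X,M)=0$; when $M',M\in\fX^\perp$ the segment $\Ext^{i}(X,M)\to\Ext^{i}(X,M'')\to\Ext^{i+1}(X,M')$ has vanishing ends for every $i\geq 1$ (the right-hand $0$ coming from $M'\in\fX^\perp$, which is exactly where the already-established equality $\fX^\perp=\ker\Ext^1(\fX,-)$ is used), so $\Ext^{i}(X,M'')=0$. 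This proves $\fX^\perp$ is coresolving.

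Statement (2) is the formal dual of (1): one replaces projective resolutions and syzygies by injective coresolutions and cosyzygies $\Omega^{-j}Y$, uses that a coresolving subcategory $\fY$ is closed under cokernels of monomorphisms so that all the cosyzygies stay inside $\fY$, and runs the identical long-exact-sequence bookkeeping for $\Ext(-,Y)$ to get $\leftidx{^\perp}\fY=\ker\Ext^1(-,\fY)$ and to verify it is resolving (closure under extensions, direct summands, containment of the projectives, and closure under kernels of epimorphisms).

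I do not expect a genuine obstacle here — the result is a bootstrapping of $\Ext^1$-vanishing to $\Ext^{\geq 1}$-vanishing. The one point that must be handled with care is keeping the (co)syzygies inside $\fX$ (resp. $\fY$): this is precisely where the third resolving axiom (closure under kernels of epimorphisms, dually cokernels of monomorphisms) enters, and it is what distinguishes a resolving subcategory from a merely extension-closed one containing the projectives. The secondary point of care is the order of the argument: the equality $\fX^\perp=\ker\Ext^1(\fX,-)$ should be established before proving that $\ker\Ext^1(\fX,-)$ is closed under cokernels of monomorphisms, since the latter invokes the vanishing of $\Ext^{2}(\fX,-)$ on objects of that class.
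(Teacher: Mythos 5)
Your proof is correct and is exactly the dimension-shift argument the paper alludes to (the lemma is cited to Auslander--Reiten with only the remark ``by dimension shift''), with the syzygies kept inside $\fX$ by the resolving axioms and the coresolving properties read off the long exact sequences. Nothing further is needed.
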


Let $\ha$ be an additive category and $\mathcal C\subseteq \ha$ a full subcategory. A right $\mathcal C$-approximation of $A\in\ha$ is a morphism $f:C\to A$ such that $C\in\mathcal C$ and $\Hom(\mathcal C,C)\to \Hom(\mathcal C,A)$ is surjective. A subcategory $\mathcal C$ is called a {\it contravariantly finite subcategoy} of $\ha$ if any $A\in\ha$ has a right $\mathcal C$-approximation. The notion of {\it covariantly finite subcategoy} is defined dually. A subcategory is called {\it functorially finite} if it is both contravariantly finite and covariantly finite.
For (co)resolving subcategories, we also have the following generalization of horse-shoe Lemma:

\begin{lem}\cite[Proposition 3.6]{AR1}\label{horseshoe}
Let $\Gamma$ be an artin algebra. Let $0\to A\to B\to C\to 0$ be an exact sequence in $\Gamma$-mod and $\fX$ a resolving subcategory of $\Gamma$-mod. If $f:X\to A$ and $h:Z\to C$ are right minimal $\fX$-approximations, then there is a right $\fX$-approximation $g: Y\to B$ such that the following diagram commutes:
$$
\xymatrix{0\ar[r]&X\ar[r]\ar^{f}[d]&Y\ar[r]\ar^{g}[d]&Z\ar[r]\ar^{h}[d]&0\\
0\ar[r]&A\ar[r]&B\ar[r]&C\ar[r]&0.}
$$
\end{lem}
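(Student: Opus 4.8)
The plan is to reduce the whole statement to Wakamatsu's Lemma. First I would record two easy observations. Since $\fX$ contains all projectives, a projective cover of $A$ (resp.\ $C$) factors through $f$ (resp.\ $h$), so $f$ and $h$ are epimorphisms. And since $\fX$ is closed under extensions and $f,h$ are \emph{minimal} right $\fX$-approximations, Wakamatsu's Lemma gives $\Ext^1(X',\Ker f)=\Ext^1(X',\Ker h)=0$ for every $X'\in\fX$. Because $\fX$ is moreover resolving, syzygies of objects of $\fX$ again lie in $\fX$, so a dimension-shift argument upgrades this to $K:=\Ker f\in\fX^{\perp}$ and $L:=\Ker h\in\fX^{\perp}$. (This is the only point at which right-minimality of $f,h$ is used: in the proof of Wakamatsu's Lemma one extends $f$ along a monomorphism $X\hookrightarrow W$ with $W\in\fX$, factors that extension back through $f$, and invokes minimality to see that the induced endomorphism of $X$ is invertible.)

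Next I would build $Y$ and $g$. Applying the long exact cohomology sequence to $0\to K\to X\xrightarrow{f}A\to 0$ and using $\Ext^1(Z,K)=\Ext^2(Z,K)=0$ (as $Z\in\fX$), one sees that $f_{*}\colon\Ext^1(Z,X)\to\Ext^1(Z,A)$ is an isomorphism. Let $\eta\in\Ext^1(C,A)$ denote the class of $0\to A\to B\to C\to 0$; I would take $\mu\in\Ext^1(Z,X)$ to be the unique class with $f_{*}\mu=h^{*}\eta$ and realize it by an exact sequence $0\to X\to Y\to Z\to 0$. Since $\fX$ is extension-closed, $Y\in\fX$. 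The identity $f_{*}\mu=h^{*}\eta$ is precisely the condition that there exist a morphism of short exact sequences extending the prescribed $f$ and $h$; it therefore produces a commutative ladder whose top row is $0\to X\to Y\to Z\to 0$, whose bottom row is the given sequence, and whose vertical maps are $f,g,h$ --- which is exactly the diagram in the statement.

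It remains to check that $g\colon Y\to B$ is a right $\fX$-approximation, and this is where the earlier work pays off. Applying the snake lemma to the ladder, $\cok f=\cok h=0$ forces $g$ to be epic and $N:=\Ker g$ to sit in a short exact sequence $0\to K\to N\to L\to 0$. By Lemma~\ref{res perp_lem} the subcategory $\fX^{\perp}=\Ker\Ext^1(\fX,-)$ is coresolving, in particular closed under extensions, so $N\in\fX^{\perp}$. Hence $\Ext^1(X',N)=0$ for every $X'\in\fX$, and applying $\Hom(X',-)$ to $0\to N\to Y\xrightarrow{g}B\to 0$ shows that $\Hom(X',Y)\to\Hom(X',B)$ is surjective; together with $Y\in\fX$, this says precisely that $g$ is a right $\fX$-approximation. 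I expect the main obstacle to be resisting the temptation to establish this last surjectivity directly by lifting maps $X'\to B$ through $g$: done by hand that route runs into a cascade of obstruction classes in various $\Ext^1$-groups that do not obviously vanish, whereas passing through $\Ker g\in\fX^{\perp}$ makes it immediate. So the genuine content of the lemma is Wakamatsu's Lemma, which is precisely why the hypothesis requires \emph{minimal} approximations.
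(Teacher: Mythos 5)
Your argument is correct: the paper states this lemma only as a citation of \cite[Proposition 3.6]{AR1} without proof, and your construction --- epimorphy of $f,h$ via projectives, Wakamatsu's Lemma plus dimension shifting to get $\Ker f,\Ker h\in\fX^{\perp}$, solving $f_{*}\mu=h^{*}\eta$ to build the ladder, and then reading off $\Ker g\in\fX^{\perp}$ from the snake lemma --- is essentially the standard Auslander--Reiten proof of that result. No gaps; all the hypotheses (minimality for Wakamatsu, resolving for the dimension shift and for $\Ext^2(Z,\Ker f)=0$) are used exactly where they are needed.
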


Let $\mathcal S$ be a set in $\Gamma$-mod.  Denote by  $filt(\mathcal S)$  the full subcategory of
$\Gamma$-mod consisting of module $X$ admitting a filtration $0 = X_0 \subset X_1 \subset ...\subset X_m = X $ of $A$-modules such that
the factors ${X_i}/X_{i-1}$  are all in $\mathcal S$ for all $1 \leq i \leq m$.
Actually, the category $ filt(\mathcal S) $ is the smallest full subcategory of $\Gamma$-mod containing $\mathcal S$  closed under extensions. As an immediate corollary of Theorem \ref{horseshoe}, we have the following:

\begin{cor}\label{simple-app_cor}
Let $\mathcal S$ be a set in $\Gamma$-mod and there is a resolving subcategory $\fX\subseteq filt(\mathcal S)$. Then $\fX$ is a contravariantly finite subcategory of $filt(\mathcal S)$ if and only if each $\Gamma$-module $S\in\mathcal S$ has a right $\fX$-approximation. In particular, $\fX$ is contravariantly finite in $\Gamma$-mod if and only if each simple $\Gamma$-module $S$ has a right $\fX$-approximation.
\end{cor}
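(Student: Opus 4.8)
The plan is to prove the nontrivial implication by induction on the length of an $\mathcal{S}$-filtration, using the horseshoe Lemma \ref{horseshoe} for the inductive step; the reverse implication and the ``in particular'' clause are then formal. First, if $\fX$ is contravariantly finite in $\filt(\mathcal{S})$ then, since each $S\in\mathcal{S}$ lies in $\filt(\mathcal{S})$ via the one-step filtration $0\subset S$, it has a right $\fX$-approximation; this is the easy direction.

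For the converse, assume every $S\in\mathcal{S}$ admits a right $\fX$-approximation, and let $M\in\filt(\mathcal{S})$ with a filtration $0=M_0\subset M_1\subset\cdots\subset M_m=M$ and $M_i/M_{i-1}\in\mathcal{S}$. I claim $M$ has a right $\fX$-approximation, arguing by induction on $m$; the cases $m\le 1$ are the hypothesis (with the zero map for $m=0$). For $m\ge 2$, the submodule $M_{m-1}$ carries a filtration of length $m-1$, so by induction it has a right $\fX$-approximation, hence --- over the artin algebra $\Gamma$, where any right $\fX$-approximation splits off a right minimal one --- a right minimal $\fX$-approximation $f\colon X\to M_{m-1}$; likewise $M_m/M_{m-1}\in\mathcal{S}$ has a right minimal $\fX$-approximation $h\colon Z\to M_m/M_{m-1}$. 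Applying Lemma \ref{horseshoe} to $0\to M_{m-1}\to M_m\to M_m/M_{m-1}\to 0$ (legitimate since $\fX$ is resolving) produces a right $\fX$-approximation $g\colon Y\to M_m=M$, with $Y$ fitting into $0\to X\to Y\to Z\to 0$ and thus in $\fX$ by closure under extensions. This closes the induction, so every object of $\filt(\mathcal{S})$ has a right $\fX$-approximation; since $\fX\subseteq\filt(\mathcal{S})$, the $\Hom$-groups computed in $\filt(\mathcal{S})$ agree with those in $\Gamma$-mod, and therefore $\fX$ is contravariantly finite in $\filt(\mathcal{S})$.

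For the last sentence, take $\mathcal{S}$ to be a set of representatives of the isomorphism classes of the simple $\Gamma$-modules; every finitely generated $\Gamma$-module has a composition series, so $\filt(\mathcal{S})=\Gamma$-mod, and the resolving subcategory $\fX$ is trivially contained in it, so the first part applies and yields the claim. The one point requiring care is the tacit appeal to the existence of \emph{right minimal} $\fX$-approximations as input to Lemma \ref{horseshoe}: this is standard for finitely generated modules over an artin algebra, since endomorphism rings are semiperfect and hence any right $\fX$-approximation $X\to M$ restricts to a right minimal one on a suitable direct summand of $X$. Beyond that the argument is a bookkeeping induction, so I do not expect a genuine obstacle.
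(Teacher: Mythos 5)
Your proof is correct and follows exactly the route the paper intends: the paper presents this corollary as an ``immediate'' consequence of Lemma \ref{horseshoe}, and your induction on the length of an $\mathcal S$-filtration, passing to right minimal approximations and applying the horseshoe lemma at each step, is precisely the omitted argument. The care you take over the existence of right minimal approximations and the agreement of $\Hom$-groups between $\filt(\mathcal S)$ and $\Gamma$-mod is sound and does not change the substance.
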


Recall that a cotorsion pair $(\fX, \fY)$ is {\it complete} if $\fX$ is contravariantly finite and, $\fY$ is covariantly finite. A cotorsion pair $(\fX, \fY)$ is {\it hereditary} if $\Ext^i(\fX,\fY)=0$ for all $i>0$.

The following results about (complete) hereditary cotorsion pairs are well-known, we include brief proofs for the convenience of reader.
\begin{prop}
Let $\Gamma$ be an artin algebra and $(\fX, \fY)$ a cotorsion pair in $\Gamma$-mod. Then the following are equivalent:
\begin{enumerate}
\item $\fX$ is   resolving,
\item $\fY$ is   coresolving,
\item $(\fX,\fY)$ is hereditary.
\end{enumerate}
\end{prop}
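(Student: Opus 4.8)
The plan is to use the fact that in any cotorsion pair $(\fX,\fY)$ the class $\fX=\ker\Ext^1(-,\fY)$ is automatically closed under extensions and direct summands and contains every projective module, while dually $\fY=\ker\Ext^1(\fX,-)$ is automatically closed under extensions and direct summands and contains every injective module. Consequently, to say that $\fX$ is resolving is to say only that $\fX$ is closed under kernels of epimorphisms, and to say that $\fY$ is coresolving is to say only that $\fY$ is closed under cokernels of monomorphisms. I would therefore organize the proof as the two equivalences $(1)\Leftrightarrow(3)$ and $(2)\Leftrightarrow(3)$, the latter being formally dual to the former.

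For $(3)\Rightarrow(1)$ I would take an exact sequence $0\to K\to X_1\to X_0\to 0$ with $X_0,X_1\in\fX$ and, for an arbitrary $Y\in\fY$, apply $\Hom(-,Y)$; from the piece $\Ext^1(X_1,Y)\to\Ext^1(K,Y)\to\Ext^2(X_0,Y)$ of the long exact sequence one reads that $\Ext^1(K,Y)=0$, the left-hand term vanishing because $(\fX,\fY)$ is a cotorsion pair and the right-hand term vanishing by hereditariness. Hence $K\in\ker\Ext^1(-,\fY)=\fX$, so $\fX$ is resolving. For $(1)\Rightarrow(3)$ the cleanest route is Lemma \ref{res perp_lem}(1): since $\fX$ is resolving, that lemma gives $\fX^\perp=\ker\Ext^1(\fX,-)$, and the latter equals $\fY$ because $(\fX,\fY)$ is a cotorsion pair; as $\fX^\perp=\bigcap_{i\ge1}\ker\Ext^i(\fX,-)$ by definition, this yields $\Ext^i(\fX,\fY)=0$ for all $i\ge1$, i.e.\ hereditariness. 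Alternatively one runs the dimension shift directly: for $X\in\fX$ pick $0\to X'\to P\to X\to 0$ with $P$ projective, note $X'\in\fX$ since $\fX$ is resolving, and use $\Ext^{i}(X,Y)\cong\Ext^{i-1}(X',Y)$ for $i\ge2$ to reduce inductively to the case $i=1$.

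The implications $(3)\Rightarrow(2)$ and $(2)\Rightarrow(3)$ are obtained by dualizing the two arguments above: replace $\Hom(-,Y)$ by $\Hom(X,-)$, kernels of epimorphisms by cokernels of monomorphisms, projectives by injectives, and Lemma \ref{res perp_lem}(1) by Lemma \ref{res perp_lem}(2). I do not expect any genuine obstacle; the only point meriting a moment's care is the preliminary observation that a cotorsion class automatically enjoys the "easy" closure properties and contains the projectives (dually for a cotorsion-free class), so that verifying resolving/coresolving reduces in each case to a single closure condition. Everything that remains is a two-line diagram chase together with a dimension shift.
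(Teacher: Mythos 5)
Your proposal is correct and follows essentially the same route as the paper: the forward implications are obtained from Lemma \ref{res perp_lem} (showing $\fY=\fX^{\perp}$, resp.\ $\fX=\leftidx{^\perp}\fY$), and $(3)\Rightarrow(1)$ is the same long-exact-sequence/dimension-shift argument showing the kernel of an epimorphism between objects of $\fX$ lies in $\ker\Ext^1(-,\fY)=\fX$. The only difference is cosmetic organization (you route everything through $(3)$, the paper proves $(1)\Leftrightarrow(2)$ first), and your preliminary remark that cotorsion classes automatically satisfy the remaining closure conditions is exactly the observation the paper also uses.
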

\begin{proof}
Notice that $^\perp \mathcal C$  is resolving and $\mathcal C^\perp$ is coresolving for any subcategory $\mathcal C$.\\
(1)$\implies$(2): Since  $(\fX, \fY)$ is a cotorsion pair, it follows that $\fY=\ker\Ext^1(\fX,-)$.  From Lemma \ref{res perp_lem}, $\fY=\fX^\perp$ is coresolving.
(2)$\implies$(1) is similar.\\
(1),(2)$\implies$ (3): From Lemma \ref{res perp_lem}, $\fY=\fX^\perp$, $\fX=\leftidx^{\perp}\fY$. Hence $\Ext^i(\fX,\fY)=0$ for all $i>0$.\\
(3)$\implies$ (1): Since  $(\fX, \fY)$ is a cotorsion pair, it is clear that $\fX$ contains all projective modules and is closed under extensions and direct summands. It suffice to show $\fX$ is also closed under kernels of epimophisms. Indeed, let $0\to K\to X\to X'\to 0$ be an exact sequence with $X, X'\in \fX$. Since $\Ext^i(\fX,\fY)=0$,  it is easy to see that $K\in\leftidx{^\perp}\fY\subseteq\ker\Ext^1(-,\fY)=\fX$.
\end{proof}

 \begin{prop}
 Let $\Gamma$ be an artin algebra  and $(\fX, \fY)$ a hereditary cotorsion pair in $\Gamma$-mod. Then the following are equivalent:
\begin{enumerate}
\item $\fX$ is a  contravariantly finite subcategory,
\item $\fY$ is a covariantly finite  subcategory,
\item $(\fX,\fY)$ is complete.
\end{enumerate}
 \end{prop}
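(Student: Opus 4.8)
The plan is to dispose of the implications involving (3) at once: by definition $(\fX,\fY)$ is complete iff $\fX$ is contravariantly finite \emph{and} $\fY$ is covariantly finite, so $(3)\Rightarrow(1)$, $(3)\Rightarrow(2)$ and $(1)\wedge(2)\Rightarrow(3)$ are all trivial, and the statement reduces to $(1)\Rightarrow(2)$ together with $(2)\Rightarrow(1)$. The implication $(2)\Rightarrow(1)$ I would obtain from $(1)\Rightarrow(2)$ by duality: the standard duality $D\colon\Gamma$-mod$\to\Gamma^{\op}$-mod carries the hereditary cotorsion pair $(\fX,\fY)$ to the hereditary cotorsion pair $(D\fY,D\fX)$ in $\Gamma^{\op}$-mod (using $\Ext^i_\Gamma(M,N)\cong\Ext^i_{\Gamma^{\op}}(DN,DM)$), and it interchanges contravariantly finite with covariantly finite subcategories; so if $\fY$ is covariantly finite then $D\fY$ is contravariantly finite, $(1)\Rightarrow(2)$ applied over $\Gamma^{\op}$ makes $D\fX$ covariantly finite, whence $\fX$ is contravariantly finite. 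Thus everything comes down to $(1)\Rightarrow(2)$.

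Assume $\fX$ is contravariantly finite. The first step is to produce, for every $M\in\Gamma$-mod, a \emph{special} right $\fX$-approximation, i.e.\ a short exact sequence $0\to Y_M\to X_M\xrightarrow{p}M\to 0$ with $X_M\in\fX$ and $Y_M\in\fY$. Take $p$ to be a right \emph{minimal} $\fX$-approximation of $M$ (these exist over an artin algebra); it is epic because $\fX$ contains the projectives, with kernel $Y_M$. To see $Y_M\in\fY=\ker\Ext^1(\fX,-)$, pick an arbitrary extension $0\to Y_M\to E\to X'\to 0$ with $X'\in\fX$ and push it out along $Y_M\hookrightarrow X_M$; the pushout $E'$ sits in two exact rows, $0\to X_M\to E'\to X'\to 0$ and $0\to E\to E'\to M\to 0$. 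From the first row $E'\in\fX$ (an extension of objects of $\fX$), so the surjection $E'\to M$ from the second row factors through $p$; right minimality of $p$ then forces the inclusion $X_M\hookrightarrow E'$ to split, so the first row represents $0$ in $\Ext^1(X',X_M)$. Since $p$ is an $\fX$-approximation, $\Hom(X',X_M)\to\Hom(X',M)$ is surjective, hence the connecting map $\Ext^1(X',Y_M)\to\Ext^1(X',X_M)$ is injective; therefore the original extension splits as well, and $Y_M\in\fY$.

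The second step is Salce's pullback argument. Given $M$, embed it in an injective module, $0\to M\to I\to N\to 0$ (so $I\in\fY$, since $\Ext^1(\fX,I)=0$), choose a special right $\fX$-approximation $0\to Y_N\to X_N\to N\to 0$ of $N$, and form the pullback $P=I\times_N X_N$. The exact sequence $0\to Y_N\to P\to I\to 0$ shows $P\in\fY$, since $\fY$ is closed under extensions; and the exact sequence $0\to M\to P\to X_N\to 0$ is a left $\fY$-approximation of $M$, because for every $Y'\in\fY$ the sequence $\Hom(P,Y')\to\Hom(M,Y')\to\Ext^1(X_N,Y')=0$ is exact, the last term vanishing as $(\fX,\fY)$ is a cotorsion pair. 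Hence $\fY$ is covariantly finite, which is $(2)$.

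I expect the main obstacle to be the first step of $(1)\Rightarrow(2)$, namely upgrading a right $\fX$-approximation to a special one. The crucial points there are that \emph{minimality} of $p$ is precisely what makes the pushed-out sequence $0\to X_M\to E'\to X'\to 0$ split, and that the connecting homomorphism $\Ext^1(X',Y_M)\to\Ext^1(X',X_M)$ is injective exactly because $p$ is an approximation. Once special precovers are available, the pullback step, the duality reduction giving $(2)\Rightarrow(1)$, and the trivial equivalences with $(3)$ are all routine.
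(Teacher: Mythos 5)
Your proposal is correct, and it is in substance the same proof the paper relies on: the paper disposes of $(1)\Leftrightarrow(2)$ by citing Salce \cite{Sa} and \cite[Proposition 1.9]{AR1}, and your argument is exactly the standard one behind those references --- Wakamatsu's Lemma (via the minimal-approximation pushout) to upgrade right $\fX$-approximations to special ones, Salce's pullback to produce left $\fY$-approximations, duality for the converse, and the definition for the equivalences with $(3)$. All steps check out; note only that your argument never actually uses hereditariness, so it proves the slightly more general statement for arbitrary cotorsion pairs.
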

\begin{proof}
(1) $\iff$ (2) is from \cite{Sa}, see also\cite[Proposition 1.9]{AR1}.

\quad \quad Then (1), (2) $\iff$ (3) by the definition.
\end{proof}

\begin{rem}\label{KS cotorsion}
 In fact, it is proved that in any complete hereditary cotorsion pair $(\fX,\fY)$, both $\fX$ and $\fY$ are functorially finite (\hspace{1sp}\cite[Corollary 2.6]{KS}).
\end{rem}

Homologically finite resolving subcategories are closely related with cotilting modules. A $\Gamma$-module $T$ is {\it cotilting} if (1) $\id T<\infty$; (2) $\Ext^i(T,T)=0$, for all $i>0$; (3) there exists an exact sequence
$
0\to T_n\to T_{n-1}\to\cdots\to T_0\to D\Gamma\to 0,
$
with $T_i\in \add T$.

Following \cite{AR1}, for any subcategory $\fX\subseteq \Gamma$-mod, denote by $$\hat\fX=\{Y | 0\to X_n\to \cdots\to X_1\to X_0\to Y\to 0 \text{\ exact}, X_i\in\fX\}.$$

\begin{thm}\cite[Theorem 5.5]{AR1}\label{cotilting induce}
\begin{enumerate}
\item If $T$ is a cotilting module, then $(\leftidx{^\perp}T,\widehat{\add T})$ is a complete hereditary cotorsion pair.
\item The map $T\to \leftidx{^\perp}T$ is a one-to-one correspondence between basic cotilting modules $T$ and contravariantly finite resolving subcategories $\fX$ of satisfying $\hat \fX=\Gamma$-mod.
\item The map $T\to \widehat{\add T}$ is a one-to-one correspondence between basic cotilting modules $T$ and covariantly finite coresolving subcategories consisting of $\Gamma$-modules with finite injective dimension.
\end{enumerate}
\end{thm}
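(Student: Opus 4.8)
The plan is to prove this as the cotilting dual of the Auslander--Reiten correspondence for tilting modules, following \cite{AR1}; since the whole statement rests on (1), I would establish (1) first. I begin with what is essentially free. First, $\leftidx{^\perp}T$ is resolving for any module $T$: as already remarked in this section, it contains the projectives, and a long exact sequence chase in the first variable gives closure under extensions, summands, and kernels of epimorphisms. Second, writing $n=\id T<\infty$, for every $M$ one has $\Omega^n M\in\leftidx{^\perp}T$ because $\Ext^{>n}(-,T)=0$; consequently $\widehat{\leftidx{^\perp}T}=\Gamma$-mod, and every module has $\leftidx{^\perp}T$-resolution dimension at most $n$. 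I would also record the formal facts $T\in\leftidx{^\perp}T$ (cotilting axiom (2)), $\add T\subseteq(\leftidx{^\perp}T)^\perp$ (immediate from the definition of $\leftidx{^\perp}T$), and, by induction on $\add T$-resolution length, $\widehat{\add T}\subseteq(\leftidx{^\perp}T)^\perp$.

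The substantial step in (1) is to produce, for each $M$, an exact sequence $0\to Y_M\to X_M\to M\to 0$ with $X_M\in\leftidx{^\perp}T$ and $Y_M\in\widehat{\add T}$. This is precisely the Auslander--Buchweitz approximation \cite{AB} attached to the pair $(\leftidx{^\perp}T,\add T)$, whose hypotheses are exactly the facts just collected; its proof is a short induction on the ($\leq n$) $\leftidx{^\perp}T$-resolution dimension of $M$. Granting this sequence, I would finish (1) as follows: $X_M\to M$ is a right $\leftidx{^\perp}T$-approximation because its kernel lies in $(\leftidx{^\perp}T)^\perp$, so $\leftidx{^\perp}T$ is contravariantly finite; Salce's Lemma, together with the fact that every injective lies in $\widehat{\add T}$ (axiom (3) plus closure under summands), furnishes a dual sequence $0\to M\to W^M\to X^M\to 0$ with $W^M\in\widehat{\add T}$ and $X^M\in\leftidx{^\perp}T$, so $\widehat{\add T}$ is covariantly finite; and if $M\in(\leftidx{^\perp}T)^\perp$ then $\Ext^1(X^M,M)=0$ splits that sequence, exhibiting $M$ as a summand of $W^M\in\widehat{\add T}$. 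Hence $(\leftidx{^\perp}T)^\perp=\widehat{\add T}$, which is then coresolving; a routine check with Lemma \ref{res perp_lem} confirms that $(\leftidx{^\perp}T,\widehat{\add T})$ is a cotorsion pair, necessarily hereditary since $\leftidx{^\perp}T$ is resolving, and complete by the above. This proves (1).

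For (2) and (3): by (1), $T\mapsto\leftidx{^\perp}T$ takes values among contravariantly finite resolving $\fX$ with $\widehat{\fX}=\Gamma$-mod, and $T\mapsto\widehat{\add T}$ takes values among covariantly finite coresolving subcategories consisting of modules of finite injective dimension (induction on $\add T$-resolution length, using $\id T<\infty$). For injectivity I would establish the ``core'' identity $\add T=\leftidx{^\perp}T\cap\widehat{\add T}=\leftidx{^\perp}T\cap(\leftidx{^\perp}T)^\perp$: ``$\subseteq$'' is clear, and for ``$\supseteq$'' I induct on the $\add T$-resolution length of an object $Z$ of the intersection --- writing $0\to C\to T_0\to Z\to 0$ with $T_0\in\add T$, one checks $C$ again lies in the intersection, so $C\in\add T$ by induction, whence $\Ext^1(Z,C)=0$ splits the sequence. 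Since $\leftidx{^\perp}T=\ker\Ext^1(-,\widehat{\add T})$ and $\widehat{\add T}=(\leftidx{^\perp}T)^\perp$, either class recovers the other and hence recovers $\add T$; and (3) follows from (2) via the cotorsion pair $(\leftidx{^\perp}T,\widehat{\add T})$, which makes $\fY\mapsto\leftidx{^\perp}\fY$ and $\fX\mapsto\fX^\perp$ mutually inverse bijections between the two classes of subcategories (using Lemma \ref{res perp_lem} and, on the coresolving side, a Wakamatsu-type argument to see $\leftidx{^\perp}\fY$ is contravariantly finite with $\widehat{\leftidx{^\perp}\fY}=\Gamma$-mod).

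The main obstacle is surjectivity in (2). Given a contravariantly finite resolving $\fX$ with $\widehat{\fX}=\Gamma$-mod, for each indecomposable injective $E_i$ I would choose a minimal right $\fX$-approximation, which --- because $\widehat{\fX}=\Gamma$-mod --- can be taken epic with kernel again in $\fX$ ($\fX$ being resolving), and then iterate to obtain finite $\add T$-coresolutions of all injectives, where $T:=\bigoplus_i T_i$. One must then verify that $T$ is cotilting: $\id T<\infty$ from $\widehat{\fX}=\Gamma$-mod and a counting argument, $\Ext^{>0}(T,T)=0$ from the approximation property combined with $\fX$ resolving, and axiom (3) from the iterated coresolution of $D\Gamma$. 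The delicate point is the equality $\leftidx{^\perp}T=\fX$ on the nose: ``$\supseteq$'' is routine from the defining sequences of the $T_i$, whereas ``$\subseteq$'' is where $\widehat{\fX}=\Gamma$-mod must be used essentially, by comparing an $\fX$-resolution of a given $N\in\leftidx{^\perp}T$ with the $\add T$-coresolution data. Apart from this verification --- and the bookkeeping inside the Auslander--Buchweitz induction --- everything reduces to long exact sequences and dimension shifting.
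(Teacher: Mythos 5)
The paper offers no proof of this statement at all: it is quoted from \cite{AR1} as known background, so there is nothing internal to compare with and your reconstruction has to be judged on its own. Its architecture is the standard Auslander--Reiten/Auslander--Buchweitz route and much of it is sound, but there are two genuine gaps. First, the hypotheses of the Auslander--Buchweitz approximation theorem are \emph{not} ``exactly the facts just collected.'' Besides $\add T\subseteq\leftidx{^\perp}T\cap(\leftidx{^\perp}T)^\perp$ and $\widehat{\leftidx{^\perp}T}=\Gamma$-mod, one must know that $\add T$ is an injective \emph{cogenerator} of $\leftidx{^\perp}T$: every $X\in\leftidx{^\perp}T$ fits into an exact sequence $0\to X\to T_0\to X'\to 0$ with $T_0\in\add T$ and $X'\in\leftidx{^\perp}T$. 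This is the base case of the intertwined induction that produces your sequence $0\to Y_M\to X_M\to M\to 0$; it is precisely where cotilting axiom (3) enters the construction (embed $X$ into an injective $E$, use $E\in\widehat{\add T}$ together with $\Ext^1(X,\widehat{\add T})=0$ to trade $E$ for a module of $\add T$, then pass to a left $\add T$-approximation to control the cokernel); and it is not a formal consequence of anything on your list --- dropping the cogenerator hypothesis makes the Auslander--Buchweitz theorem false (already for $\omega=0$ it would assert $\widehat{\fX}=\fX$).

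Second, the surjectivity in (2), which is the hardest part of the theorem, is not established. Your description of the construction contains an error: the kernel of a minimal right $\fX$-approximation $T_i\to E_i$ lies in $\fX^\perp$ by Wakamatsu's Lemma, not ``again in $\fX$''; what the iteration actually produces is a resolution of $E_i$ by modules of $\omega:=\fX\cap\fX^\perp$ (each term is an extension of two modules of $\fX^\perp$ and lies in $\fX$). One must then still prove (a) that these resolutions terminate, (b) that $\omega=\add T$ for a single basic module $T$ --- a finiteness statement that is a separate proposition in \cite{AR1} and does not follow from setting $T=\bigoplus_i T_i$ using only the first-step approximations, since the higher terms need not lie in $\add(\bigoplus_i T_i)$ --- and (c) the identity $\fX=\leftidx{^\perp}(\fX^\perp)$, which is the real content of the inclusion $\leftidx{^\perp}T\subseteq\fX$ and which you leave at the level of ``comparing resolutions.'' Part (1), the injectivity via $\add T=\leftidx{^\perp}T\cap(\leftidx{^\perp}T)^\perp$, and the derivation of (3) from (2) go through once the cogenerator lemma is supplied; the surjectivity in (2) as written is a plan rather than a proof.
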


In summary, we have the following inclusion of sets:
\begin{eqnarray*}
&&\{\text{cotorsion pairs $(\leftidx{^\perp}T,\widehat{\add T})| T$ is a cotilting module }\}\\
& \subseteq& \{\text{complete hereditary cotorsion pairs} (\fX,\fY)\}\\
  &\subseteq& \{\text{hereditary cotorsion pairs}(\fX,\fY)\}.
\end{eqnarray*}
It is possible for both inclusions to be proper.

\section {\bf Separeted monic quiver representations}

\subsection{Separated monic representations}
We fix some notations first. For an acyclic quiver $Q$, we label the vertices of $Q$ as $1,\ 2,\ \cdots,\ n$,
such that if there is an arrow from $j$ to $i$, then $j>i$.

\begin{enumerate}
\item Denote by $\hp(i\to j)$  the set of paths from $i$ to $j$;

\item Denote by $\hp(i\to )$ (resp. $\hp(   \to i)$) the set of paths starting (resp. ending) at $i$;

\item Denote by $\mathcal{A}(i\to j)$ the set of arrows from $i$ to $j$;

\item Denote by $\mathcal{A}(i\to  )$ (resp. $\mathcal{A}(   \to i)$) the set of arrows starting (resp. ending) at $i$.
\end{enumerate}

Let $p$ be a path on $Q$. Denote by $s(p)$ (resp. $e(p)$) the starting (resp. ending) vertex of $p$.
Let $I$ be an admissible ideal generated by monomial relations of $kQ$. For an arrow $\alpha$
we put $$K_\alpha: = \{q\in \hp(\to s(\alpha)) \ | q\not\in I, \ \alpha q \in I\}. \eqno (3.1)$$

Let $\fX$ be an additive full subcategory of $A$-mod. A representation of the bound quiver $(Q,I)$ over $\fX$ is $X =
(X_i, \ X_{\alpha}, \ i\in Q_0, \ \alpha\in Q_1)$, where $X_i\in \fX$ and $X_\alpha: X_{s(\alpha)}\to X_{e(\alpha)}$ are $A$-module homomorphisms such that $X_\gamma:=X_{\alpha_l}\cdots X_{\alpha_1}=0$ for each $\gamma=\alpha_l\cdots\alpha_1$ in a minimal set of generators of $I$. A homomorphism $f=(f_i)_{i\in Q_0}:X\to Y$ between representations consists $f_i\in\Hom_A(X_i,Y_i)$ such that $Y_\alpha f_{s(\alpha)}=f_{e(\alpha)}X_\alpha$ for all $\alpha\in Q_1$.
Denote by $\rep(Q,I,\fX)$ the category of representations of $(Q,I)$ over $\fX$ and simply denote by $\rep(Q,I,A)$ the category of representations of $(Q,I)$ over $A$-mod. It is well-known that there is an equivalence $\rep(Q,I,A)\cong \m$-mod, where $\m=A\otimes kQ/I$. So we will identify  $\m$-modules with representations of $(Q,I)$ over $A$-mod.

For a representation $X$ of  $(Q,I)$ over $\fX$ and a vertex $i\in Q_0$,
the cokernel (resp. kernel ) of the homomorphism
$(X_{\alpha})_{\begin{smallmatrix} \alpha\in Q_1\\ e(\alpha) = i \end{smallmatrix}}:\ \bigoplus\limits_{\begin
{smallmatrix} \alpha\in Q_1\\ e(\alpha) = i \end{smallmatrix}}
X_{s(\alpha)} \longrightarrow X_i$ is denoted by $\cok_i(X)$  (resp. $\Ker_i(X)$). By convention, $\cok_i(X)=X_i$ (resp. $\Ker_i(X)=0$) if $i$ is a source vertex.

In \cite{ZX}, the authors gave the concept of separeted monic representations of quiver over an algebra:

\begin{defn} \label{maindef} \ A representation $X =
(X_i, \ X_{\alpha}, \ i\in Q_0, \ \alpha\in Q_1)$ of the bound
quiver $(Q, I)$ over $\fX$ is a separated monic representation, provided that $X$
satisfies the conditions$:$

\vskip5pt

${\rm (m1)}$  \ For each $i\in Q_0$,  the sum $\sum\limits_{\begin
{smallmatrix} \alpha\in \ha(\to i)
\end{smallmatrix}}\Ima
X_\alpha$ is a direct sum $\bigoplus\limits_{\begin {smallmatrix}
\alpha\in \ha(\to i) \end{smallmatrix}}\Ima X_\alpha;$

\vskip5pt

${\rm (m2)}$  \  For each $\alpha\in Q_1$,   \ $\Ker
X_\alpha =\sum\limits_{q\in K_\alpha} \Ima X_q, $ where $K_\alpha$ is
as in $(3.1)$;

${\rm (m3)}$ \ For each  $i\in Q_0$, $\cok_i(X)\in\fX$.

Denote by ${\rm smon}(Q, I, \fX)$ the category of separated monic rerpesentations of the bound
quiver $(Q, I)$ over $\fX$ and simply by $\smon(Q,I,A)$ the category of separated monic rerpesentations of the bound
quiver $(Q, I)$ over $A$-mod.
\end{defn}

Let $P(i)$ (resp. $I(i)$ or $S(i)$) be the indecomposable projective (resp. injective or simple) $kQ/I$-module at $i\in Q_0$.
It is clear that $P(i)\in {\rm smon}(Q,I, k)$, it follows that
$M\otimes_k P(i)\in {\rm smon}(Q, I, A)$ for $M\in A$-mod. It is clear that $\smon(Q,I,\fX)\subseteq \smon(Q,I,A)$ for any subcategory $\fX\subseteq A$-mod. We will be interested in the case when $\fX$ is closed under extensions.

\begin{lem}\cite[Lemma 2.5]{ZX}\label{ext close_lem}
Let $\fX$ be an additive full subcategory of $A$-mod. Then $\smon(Q, I, \fX )$ is closed under extensions (resp. kernels of epimorphisms; direct summands)
if and only if $\fX$ is closed under extensions (resp. kernels of epimorphisms; direct summands). In particular, $\smon(Q, I, \fX )$ is resolving if and only if $\fX$ is resolving.
\end{lem}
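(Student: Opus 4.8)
The plan is to transfer the three closure properties back and forth between $\fX$ and $\smon(Q,I,\fX)$ by means of two devices: the exact, additive functor $-\otimes_k P(i)\colon A\text{-mod}\to\m\text{-mod}$ (exact because $k$ is a field), and, in the other direction, the vertex-evaluation functors, the cokernel functors $\cok_i$, and the snake lemma; for a representation $Y$ of $(Q,I)$ and a vertex $i$ I write $\Phi_i^{Y}\colon\bigoplus_{\alpha\in\ha(\to i)}Y_{s(\alpha)}\to Y_i$ for the map with components $Y_\alpha$, so that $\cok_i(Y)=\cok\Phi_i^{Y}$. The first step I would isolate is that, for $M\in A\text{-mod}$ and $i\in Q_0$, one has $M\otimes_k P(i)\in\smon(Q,I,\fX)$ if and only if $M\in\fX$: the vertex-$i$ component of $M\otimes_k P(i)$ is $M$ (as $(P(i))_i$ is one-dimensional), which gives ``only if'', while for ``if'' one uses that $P(i)\in\smon(Q,I,k)$ and that $M\otimes_k-$ is exact and commutes with $\Ima$, $\Ker$ and finite direct sums, so $M\otimes_k P(i)$ inherits (m1) and (m2), its vertex modules are finite direct sums of copies of $M$, and $\cok_j(M\otimes_k P(i))=M\otimes_k\cok_j(P(i))$ equals $M$ for $j=i$ and $0$ otherwise --- in all cases an object of $\fX$. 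Granting this, the ``only if'' direction of the lemma is immediate: given $0\to M'\to M\to M''\to 0$ exact in $A\text{-mod}$ with $M',M''\in\fX$, tensoring with $P(i)$ produces a short exact sequence in $\m\text{-mod}$ with outer terms in $\smon(Q,I,\fX)$, whence closure of $\smon(Q,I,\fX)$ under extensions forces $M\otimes_k P(i)$, and hence $M$, into $\fX$; the direct-summand and kernel-of-epimorphism implications run identically, applying $-\otimes_k P(i)$ to a split monomorphism with target in $\fX$, respectively to a short exact sequence whose middle and right-hand terms lie in $\fX$.

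For the converse, the direct-summand case is formal: if $X=X'\oplus X''\in\smon(Q,I,\fX)$ then $X_i=X'_i\oplus X''_i$ at each vertex, and since $\Ima$, $\Ker$ and $\cok_i$ commute with finite direct sums, each of (m1), (m2), (m3) passes to the summands. So suppose $\fX$ is closed under extensions (respectively kernels of epimorphisms) and let $0\to X'\to X\to X''\to 0$ be exact in $\m\text{-mod}$ with $X',X''$ (resp.\ $X,X''$) in $\smon(Q,I,\fX)$; evaluating at a vertex gives $0\to X'_i\to X_i\to X''_i\to 0$ exact in $A\text{-mod}$, so the vertex modules of the remaining representation lie in $\fX$, and it remains to verify (m1), (m2), (m3) for it. Conditions (m1) and (m2) do not mention $\fX$, so their verification is a statement about $\smon(Q,I,A)$ alone: in the extension case one deduces (m1), (m2) for $X$ from (m1), (m2) for $X'$ and $X''$ by short diagram chases built on the snake lemma applied to the individual maps $X_\alpha$ and to the $\Phi_i^{Y}$, using repeatedly that $X_{\alpha q}=0$ for $q\in K_\alpha$ (because $\alpha q\in I$); the recurring intermediate point is that $X'_i\cap\Ima X_\alpha=\Ima X'_\alpha$ for every arrow $\alpha$ ending at $i$, which itself follows from (m2) for $X''$ by such a chase. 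In the kernel-of-epimorphisms case (m1) for $X'$ is inherited at once, since images of arrows that are independent in $X$ stay independent in the subrepresentation $X'$.

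With (m1) now in place for $X$ and $X''$ --- by hypothesis in the kernel case, and by the previous paragraph in the extension case --- condition (m3) follows uniformly from the snake lemma applied to $0\to X'\to X\to X''\to 0$ and to the $\Phi_i^{Y}$. Indeed $\cok_i$ is right exact, so $\cok_i(X')\to\cok_i(X)\to\cok_i(X'')\to 0$ is exact; (m1) gives $\Ker\Phi_i^{Y}=\bigoplus_{\alpha\in\ha(\to i)}\Ker Y_\alpha$ for $Y=X$ and $Y=X''$; and the connecting homomorphism vanishes because each $\Ker X_\alpha\to\Ker X''_\alpha$ is onto --- equivalently $X'_i/\Ima X'_\alpha\to X_i/\Ima X_\alpha$ is injective, i.e.\ $X'_i\cap\Ima X_\alpha=\Ima X'_\alpha$ --- again by (m2) for $X''$. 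Hence $0\to\cok_i(X')\to\cok_i(X)\to\cok_i(X'')\to 0$ is exact and $\cok_i$ of the remaining representation lies in $\fX$ by the closure hypothesis; in particular this also shows that $\smon(Q,I,A)$ itself is closed under extensions, kernels of epimorphisms and direct summands. For the last assertion, recall that a subcategory is resolving precisely when it is closed under extensions, direct summands and kernels of epimorphisms and contains the projectives; the three closure properties are matched by the equivalences above, and for the projectives one uses that, $\m$ being $A\otimes_k kQ/I$, the indecomposable projective $\m$-modules are exactly the modules $P\otimes_k P(i)$ with $P$ an indecomposable projective $A$-module and $i\in Q_0$ --- so that, both $\fX$ and $\smon(Q,I,\fX)$ being closed under direct summands, $\smon(Q,I,\fX)$ contains all projective $\m$-modules if and only if $\fX$ contains all projective $A$-modules.

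The one step I expect to be genuinely delicate is condition (m2) in the kernel-of-epimorphisms case: given $X,X''\in\smon(Q,I,\fX)$ and $X'=\Ker(X\to X'')$, one must prove $\Ker X'_\alpha\subseteq\sum_{q\in K_\alpha}\Ima X'_q$ for every arrow $\alpha$ (the reverse inclusion, (m1) for $X'$, and --- by the previous paragraph --- (m3) for $X'$ all being in hand). The obstruction is that $\sum_{q\in K_\alpha}\Ima X_q$ need not be a direct sum indexed by $K_\alpha$, so a decomposition of an element of $\Ker X_\alpha$ cannot simply be lifted term by term along $X\to X''$. My plan is to organise $K_\alpha$ by the last arrow of each path: writing $q=\beta q'$ with $\beta\in\ha(\to s(\alpha))$, condition (m1) for $X$ gives $\sum_{q\in K_\alpha}\Ima X_q=\bigoplus_{\beta}\bigl(\sum_{q\in K_\alpha,\;q=\beta q'}\Ima X_q\bigr)$, and likewise for $X''$. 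For $y\in\Ker X'_\alpha\subseteq\Ker X_\alpha$ this produces a $\beta$-homogeneous decomposition of $y$; its image in $X''_{s(\alpha)}$ is zero, hence zero in each homogeneous component by (m1) for $X''$, forcing the $\beta$-component of $y$ into $X'_{s(\alpha)}\cap\Ima X_\beta=\Ima X'_\beta$; a descending induction on the source vertex of $\alpha$ --- applied recursively, since the relevant instances of (m2) for $X'$ occur at the strictly larger vertices $s(\beta)$ --- then expresses each such component through the $X'_q$ with $q\in K_\alpha$. Carrying this bookkeeping through carefully, keeping control of the sets $K_\alpha$ and of the partial directness furnished by (m1), is where I expect the real work to lie.
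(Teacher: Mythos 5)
The paper itself offers no proof of this lemma: it is quoted verbatim from \cite{ZX}, so there is no internal argument to compare yours against, and what follows is an assessment of your proposal on its own terms.

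Your architecture is sound and, as far as I can check, correct. The equivalence $M\otimes_k P(i)\in\smon(Q,I,\fX)\iff M\in\fX$ does reduce the ``only if'' direction to the closure hypothesis on $\smon(Q,I,\fX)$; in the converse direction, the verifications of (m1) and (m2) for the middle term of an extension, and of (m3) in both cases via the vanishing of the connecting homomorphism, all go through, with the identity $X'_i\cap\Ima X_\alpha=\Ima X'_\alpha$ (deduced from (m2) for $X''$ and the surjectivity of $X\to X''$, using $X_{\alpha q}=0$ for $q\in K_\alpha$) doing exactly the work you assign to it. The step you flag as delicate --- (m2) for the kernel $X'$ of an epimorphism --- is indeed the crux, and your sketch is completable, but with one caveat: the induction cannot run literally on ``the instances of (m2) for $X'$ at the larger vertices $s(\beta)$''. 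Knowing $\Ker X'_\beta=\sum_{p\in K_\beta}\Ima X'_p$ is not enough to push the lifted element $z\in X'_{s(\beta)}$ with $X_\beta(z)=y_\beta$ into $X_\beta\bigl(\sum_{q=\beta q'\in K_\alpha}\Ima X'_{q'}\bigr)$. You need to strengthen the inductive statement to: for every path $p\notin I$ of length $\geq 1$, $X'_{s(p)}\cap\sum_{q\in S_p}\Ima X_q=\sum_{q\in S_p}\Ima X'_q$, where $S_p=\{q\notin I\mid pq\in I\}$, proved by descending induction on $s(p)$; your $K_\alpha$ is $S_\alpha$, and the set appearing at the next stage is $S_{\alpha\beta}=K_\beta\cup\{q'\mid \beta q'\in K_\alpha\}$, which is why (m2) for $X'$ alone does not suffice as the hypothesis. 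With that adjustment the bookkeeping closes. Finally, note that the hardest part of your argument --- that $\smon(Q,I,A)$ itself is closed under extensions and kernels of epimorphisms --- can be obtained for free from the identification $\smon(Q,I,A)=\leftidx{^\perp}(DA\otimes kQ/I)$ of Corollary~\ref{smon corr_cor}, after which only the exactness of $\cok_i$ on short exact sequences in $\smon(Q,I,A)$ requires a diagram chase; this is essentially the route of \cite{LZ3,ZX}. Your argument is more elementary and self-contained, at the cost of the combinatorial induction above.
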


\subsection{Two adjoint pairs}

Notice that $\cok_i(-):\m$-mod$\to A$-mod is a functor. Also denote by $(-)_i: \m$-mod$\to A$-mod the localization functor at branch $i$, i.e. for $X=(X_i, \ X_{\alpha}, \ i\in Q_0, \ \alpha\in Q_1)$, $(X)_i:=X_i$. Recall that both $\cok_i(-)$ and $(-)_i$ fit into adjoint pairs:

\begin{lem}
For each $i\in Q_0$, we have
\begin{enumerate}
\item $(\cok_i(-), -\otimes S(i))$ is an adjoint pair.
\item $(-\otimes P(i), (-)_i)$ is an adjoint pair.
\end{enumerate}
Furthermore $\cok_i(-)$ is exact on $\smon(Q,I,A)$ and $(-)_i$ is an exact functor. $\cok_i(-)$ preserves projective modules and $(-)_i$ preserves both projective and injective modules.
\end{lem}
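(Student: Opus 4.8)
The plan is to establish the two adjunctions first and then read off all the remaining assertions, the only genuinely non-formal point being the exactness of $\cok_i$ on $\smon(Q,I,A)$.

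For the pair $(-\otimes_k P(i),(-)_i)$ I would realize $-\otimes_k P(i)$ as a tensor functor by an idempotent. Write $e_i\in kQ/I$ for the trivial path at $i$ and $\varepsilon_i:=1_A\otimes e_i\in\m$. Then $\m\varepsilon_i=A\otimes_k(kQ/I)e_i=A\otimes_k P(i)$, and since $Q$ is acyclic the corner ring is $\varepsilon_i\m\varepsilon_i=A\otimes_k e_i(kQ/I)e_i=A\otimes_k ke_i\cong A$; hence $\m\varepsilon_i$ is an $(\m,A)$-bimodule and $M\otimes_k P(i)\cong\m\varepsilon_i\otimes_A M$ naturally for $M$ an $A$-module. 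Combining tensor--hom adjunction with $\Hom_\m(\m\varepsilon_i,X)\cong\varepsilon_i X=X_i$ gives the natural isomorphism $\Hom_\m(M\otimes_k P(i),X)\cong\Hom_A(M,X_i)$, which is (2).

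For $(\cok_i(-),-\otimes_k S(i))$ I would argue directly from representations. The $\m$-module $M\otimes_k S(i)$ is the representation with $M$ placed at the vertex $i$, zero at all other vertices and all structure maps zero; a morphism $X\to M\otimes_k S(i)$ is therefore the same datum as an $A$-linear map $X_i\to M$ that vanishes on $\sum_{\alpha\in\ha(\to i)}\Ima X_\alpha$, and by the universal property of the cokernel these correspond bijectively and naturally to maps $\cok_i(X)\to M$. This proves (1). Along the way one records that both $-\otimes_k P(i)$ and $-\otimes_k S(i)$ are exact: the former because tensoring over the field $k$ with the finite-dimensional space underlying $P(i)$ is exact at every vertex, the latter because it is the (vertexwise exact) ``supported at $i$'' functor.

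It remains to deal with exactness and preservation. The functor $(-)_i$ is exact because kernels and cokernels in $\rep(Q,I,A)\cong\m$-mod are formed vertexwise; it is a right adjoint by (2), hence preserves injectives, and it preserves projectives as well, since $(P\otimes_k P(j))_i$ is a finite direct power of $P$ for a projective $A$-module $P$ (equivalently, $\varepsilon_i\m$ is a projective left $A$-module, so $\Hom_A(\varepsilon_i\m,-)$, whose left adjoint is $(-)_i$, is exact). The functor $\cok_i$ is a left adjoint by (1) whose right adjoint $-\otimes_k S(i)$ is exact, hence it preserves projectives (concretely, $\cok_i(P\otimes_k P(j))$ equals $P$ if $j=i$ and $0$ otherwise). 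Finally, $\cok_i$ is right exact, being a cokernel functor, so for exactness on $\smon(Q,I,A)$ take a short exact sequence $0\to X'\to X\to X''\to 0$ in $\smon(Q,I,A)$ and apply the snake lemma to the commutative diagram whose two exact rows come from the functors $\bigoplus_{\alpha\in\ha(\to i)}(-)_{s(\alpha)}$ and $(-)_i$ and whose three vertical maps are the structure maps $\phi_{X'},\phi_X,\phi_{X''}$, where $\phi_X\colon\bigoplus_{\alpha\in\ha(\to i)}X_{s(\alpha)}\to X_i$ has cokernel $\cok_i(X)$. The resulting long exact sequence shows that $0\to\cok_i(X')\to\cok_i(X)\to\cok_i(X'')\to 0$ is exact precisely when $\Ker\phi_X\to\Ker\phi_{X''}$ is surjective. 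This is where the hypotheses enter: by (m1) the sum computing $\phi_X$ is direct, so $\Ker\phi_X=\bigoplus_{\alpha\in\ha(\to i)}\Ker X_\alpha$; by (m2), $\Ker X_\alpha=\sum_{q\in K_\alpha}\Ima X_q$; and a vertexwise epimorphism of representations restricts to an epimorphism $\Ima X_q\twoheadrightarrow\Ima X''_q$ for each path $q$, immediate from the intertwining relations together with surjectivity at $s(q)$. Together these force $\bigoplus_\alpha\Ker X_\alpha\twoheadrightarrow\bigoplus_\alpha\Ker X''_\alpha$. I expect this last verification — equivalently, that the snake connecting map dies on $\smon(Q,I,A)$ — to be the main and essentially only obstacle, since it is the single place where conditions (m1) and (m2) are genuinely used; everything else is formal bookkeeping with adjoint pairs and vertexwise exactness.
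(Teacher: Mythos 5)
Your proposal is correct. For part (1) it coincides with the paper's own argument (the universal property of the cokernel applied to maps $X_i\to Y$ killing $\sum_{\alpha\in\ha(\to i)}\Ima X_\alpha$). For part (2) and for the exactness of $\cok_i(-)$ on $\smon(Q,I,A)$ the paper gives no proof at all, simply citing \cite[Lemma 1.2]{LZ3} and \cite[Lemma 2.5]{ZX}; you supply complete and valid arguments for both — the corner-idempotent realization $M\otimes_k P(i)\cong\m\varepsilon_i\otimes_AM$ with $\varepsilon_i\m\varepsilon_i\cong A$ (using acyclicity of $Q$), and the snake-lemma reduction of exactness to the surjectivity of $\Ker\phi_X\to\Ker\phi_{X''}$, which is exactly where (m1) and (m2) enter. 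Your identifications $\cok_i(P\otimes P(j))=\delta_{ij}P$ and $(P\otimes P(j))_i\cong P^{\dim e_i(kQ/I)e_j}$ correctly dispose of the preservation statements that the paper calls straightforward.
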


\begin{proof}
(1) is stated in \cite[Lemma 1.1]{ZX}. We provide a short proof here:

For $X\in\m$-mod, denote by $\pi_i$ the cokernel of $(X_{\alpha})_{\begin{smallmatrix} \alpha\in Q_1\\ e(\alpha) = i \end{smallmatrix}}:\ \bigoplus\limits_{\begin
{smallmatrix} \alpha\in Q_1\\ e(\alpha) = i \end{smallmatrix}}
X_{s(\alpha)} \longrightarrow X_i$. Set

 \begin{eqnarray*}
 \xi^i_{X,Y}:\Hom(\cok_i X,Y)&\to& \Hom(X, Y\otimes S(i))\\
 g&\mapsto& (f_j)_{j\in Q_0}
 \end{eqnarray*}
where $f_i=g\circ \pi_i$ and $f_j= 0$ for $j\neq i$.

 Notice that for $X\in\m$-mod and $Y\in A$-mod, a morphism $(f_j)_{j\in Q_0}\in\Hom (X,Y\otimes S(i))$ if and only if $f_i\circ X_\alpha=0$ for all $\alpha\in Q_1$ with $e(\alpha)=i$ and $f_j= 0$ for $j\neq i$, if and only if $f_i\circ (X_{\alpha})_{\begin{smallmatrix} \alpha\in Q_1\\ e(\alpha) = i \end{smallmatrix}}=0$ and $f_j= 0$ for $j\neq i$. By the universal property of cokernels, this is equivalent to saying there exists a unique $g:\cok_i(X)\to Y$ such that  $f_i=g\circ \pi_i$. Hence $\xi^i_{X,Y}$ is a bijection. The naturality of $\xi^i_{X,Y}$ is straightforward to check.

 For the proof of (2), we refer to \cite[Lemma 1.2]{LZ3}.

The exactness of $\cok_i(-)$ on $\smon(Q,I,A)$ is proved in \cite[Lemma 2.5]{ZX} and the remaining assertions are straightforward.
\end{proof}

We need a general result about adjoint pairs:

\begin{lem}\label{adj pair_lem}
Let $A$, $B$ be artin algebras and $F:A$-$mod\to B$-$mod$ a functor left adjoint to $G:B$-$mod\to A$-$mod$.
\begin{enumerate}
\item If $F$ is exact on a resolving subcategory $\fX\subseteq A$-mod and preserves projective objects, then for $M\in\fX$ and $N\in B$-mod, $\Ext_B^k(FM,N)\cong \Ext^k_A(M,GN)$, $\forall k\geq 0$.
\item If $G$ is exact on a coresolving subcategory $\fY\subseteq B$-mod and preserves injective objects, then for $M\in A$-mod and $N\in\fY$, $\Ext_B^k(FM,N)\cong \Ext^k_A(M,GN)$, $\forall k\geq 0$.
\end{enumerate}

\end{lem}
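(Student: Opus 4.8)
The plan is to prove (1) and note that (2) is dual. The case $k=0$ is just the adjunction isomorphism $\Hom_B(FM,N)\cong\Hom_A(M,GN)$, which holds with no hypotheses. For $k\geq 1$ I would argue by dimension shift. Since $\fX$ is resolving, it contains all projectives and is closed under kernels of epimorphisms, so for any $M\in\fX$ there is a short exact sequence $0\to M'\to P\to M\to 0$ with $P$ projective and $M'\in\fX$ again. Applying $F$, which is exact on $\fX$ and sends projectives to projectives, yields a short exact sequence $0\to FM'\to FP\to FM\to 0$ in $B$-mod with $FP$ projective. Now compare the two long exact sequences: the $\Ext_B(-,N)$-sequence attached to $0\to FM'\to FP\to FM\to 0$ and the $\Ext_A(-,GN)$-sequence attached to $0\to M'\to P\to M\to 0$. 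Because $FP$ and $P$ are projective, $\Ext_B^{k}(FP,N)=0=\Ext_A^{k}(P,GN)$ for $k\geq 1$, so the connecting maps give isomorphisms $\Ext_B^{k}(FM,N)\cong\Ext_B^{k-1}(FM',N)$ and $\Ext_A^{k}(M,GN)\cong\Ext_A^{k-1}(M',GN)$ for $k\geq 2$, and for $k=1$ an exact commutative ladder whose relevant terms are the $\Hom$'s, where the adjunction isomorphism applies. A straightforward induction on $k$, using the $k=0$ adjunction as the base case and the naturality of the adjunction isomorphism to see that the ladder commutes, then gives $\Ext_B^{k}(FM,N)\cong\Ext_A^{k}(M,GN)$ for all $k\geq 0$.

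To make the induction run cleanly I would set it up so that the inductive statement is ``$\Ext_B^{k}(FM,N)\cong\Ext_A^{k}(M,GN)$ naturally in $M\in\fX$ and $N\in B$-mod'' for the fixed value $k$; the point of carrying naturality along is that it lets one identify the two connecting homomorphisms in the ladder. Concretely, for $k=1$: the diagram
$$
\xymatrix{
\Hom_B(FP,N)\ar[r]\ar[d]^{\cong} & \Hom_B(FM',N)\ar[r]\ar[d]^{\cong} & \Ext_B^1(FM,N)\ar[r]\ar[d] & 0\\
\Hom_A(P,GN)\ar[r] & \Hom_A(M',GN)\ar[r] & \Ext_A^1(M,GN)\ar[r] & 0
}
$$
has exact rows (using $\Ext_B^1(FP,N)=0$, $\Ext_A^1(P,GN)=0$) and the two left vertical maps are the adjunction isomorphisms, which are compatible with the maps induced by $M'\to P$ by naturality; hence the induced map on cokernels $\Ext_B^1(FM,N)\to\Ext_A^1(M,GN)$ is an isomorphism. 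For the inductive step $k\to k+1$ one replaces $M$ by $M'\in\fX$ (which is why $\fX$ resolving, not merely extension-closed, is used) and compares $\Ext^{k+1}$ with $\Ext^{k}$ via the same sequence.

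I do not expect a genuine obstacle here; the only thing to be careful about is that all hypotheses are genuinely used and in the right place: exactness of $F$ on $\fX$ is needed to get a short exact sequence after applying $F$, $F$ preserving projectives is needed to kill the middle $\Ext$ terms, and $\fX$ being resolving (closed under kernels of epimorphisms, containing projectives) is needed both to produce the syzygy sequence and to keep $M'$ inside $\fX$ for the induction. Part (2) follows by the dual argument: use that $\fY$ is coresolving to pick $0\to N\to J\to N'\to 0$ with $J$ injective and $N'\in\fY$, apply the exact functor $G$ (which preserves injectives) to get $0\to GN\to GJ\to GN'\to 0$ with $GJ$ injective, and run the same dimension-shift induction on $k$ using the long exact sequences of $\Ext_B(FM,-)$ and $\Ext_A(M,-)$.
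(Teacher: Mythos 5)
Your proof is correct and essentially the same as the paper's: both rest on the fact that $F$ carries a projective resolution of $M$ (whose syzygies stay in $\fX$) to a projective resolution of $FM$, combined with the naturality of the adjunction isomorphism. The only difference is presentational — the paper applies $F$ to the whole resolution at once and identifies the two $\Hom$-complexes, whereas you unpack the same computation one syzygy at a time by dimension shifting.
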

\begin{proof}
We just prove for (1).
Take a projective resolution of the $A$-module $M\in\fX$:
$$
\cdots \to P_1\to P_0\to M\to 0.
$$
Since $\fX$ is resloving, each syzygy $\Omega^k M\in\fX$, $k\geq 0$.  By the hypothesis that $F$ is exact on $\fX$ and preserves projective objects, we obtain a projective resolution of $B$-modules:
$$
\cdots \to FP_1\to FP_0\to FM\to 0.
$$
Applying $\Hom_B(-,N)$ to it and using the fact that $(F,G)$ is an adjoint pair, we have the following isomorphism between complexes:
$$
\xymatrix{0\ar[r]& \Hom_B(FM,N)\ar[r] \ar[d]^{\wr}&\Hom_B(FP_0,N)\ar[r]\ar[d]^{\wr} &\Hom_B(FP_1,N)\ar[r]\ar[d]^{\wr}&\cdots \\
0\ar[r]& \Hom_A(M,GN)\ar[r] &\Hom_A(P_0,GN)\ar[r] &\Hom_A( P_1,GN)\ar[r]&\cdots}
$$
 Therefore, we obtain isomorphisms between the homology groups: $$\Ext_B^k(FM,N)\cong \Ext^k_A(M,GN),\forall k\geq 0. $$
 \end{proof}

As an immediate application, we have the following key lemma:
\begin{lem}\label{adj ext_lem} For each $k\geq 0$, there are isomorphisms:
\begin{enumerate}
\item  $\Ext_A^k(\cok_i(Y),X)\cong \Ext^k_\m(Y,X\otimes S(i))$, for $X\in \m$-mod and $Y\in\smon(Q,I,A)$.
\item  $\Ext^k_\m(X\otimes P(i),Y)\cong \Ext_A^k(X,Y_i)$,  for $X\in A$-mod and $Y\in\m$-mod.
\end{enumerate}
\end{lem}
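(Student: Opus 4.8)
The plan is to deduce both isomorphisms directly from Lemma~\ref{adj pair_lem}, applied to the two adjoint pairs $(\cok_i(-),\,-\otimes S(i))$ and $(-\otimes P(i),\,(-)_i)$ recorded in the preceding lemma; the whole argument then reduces to checking that the hypotheses of Lemma~\ref{adj pair_lem} hold on the correct side.

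For part (1), I would take $F=\cok_i(-)$, the functor from $\m$-mod to $A$-mod, which is left adjoint to $G=-\otimes S(i)$ from $A$-mod to $\m$-mod. The resolving subcategory to feed into Lemma~\ref{adj pair_lem}(1) is $\fX=\smon(Q,I,A)$, regarded as a full subcategory of $\m$-mod: it is resolving by Lemma~\ref{ext close_lem}, since the whole category $A$-mod is trivially resolving. The preceding lemma states that $\cok_i(-)$ is exact on $\smon(Q,I,A)$ and preserves projective modules, which are exactly the two hypotheses needed. Applying Lemma~\ref{adj pair_lem}(1) with $M=Y\in\smon(Q,I,A)$ and $N=X$ (an $A$-module, so that $X\otimes S(i)$ is an $\m$-module) then gives $\Ext_A^k(\cok_i(Y),X)\cong\Ext_\m^k(Y,X\otimes S(i))$ for all $k\geq 0$.

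For part (2), I would apply Lemma~\ref{adj pair_lem}(2) to the adjoint pair $(-\otimes P(i),\,(-)_i)$, so that $F=-\otimes P(i)$ goes from $A$-mod to $\m$-mod and $G=(-)_i$ goes from $\m$-mod to $A$-mod. Here the coresolving subcategory required is all of $\m$-mod, and the preceding lemma already records that $(-)_i$ is exact and preserves injective modules; so Lemma~\ref{adj pair_lem}(2) gives $\Ext_\m^k(X\otimes P(i),Y)\cong\Ext_A^k(X,Y_i)$ for $X\in A$-mod, $Y\in\m$-mod and all $k\geq 0$. Alternatively, Lemma~\ref{adj pair_lem}(1) applies equally well to this pair, since $-\otimes_k P(i)$ is exact ($P(i)$ being a $k$-vector space) and preserves projectives: $A\otimes_k P(i)$ is a direct summand of the regular module $\m$, hence a projective $\m$-module, so $P\otimes_k P(i)$ is projective for every projective $A$-module $P$.

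I do not expect any genuine obstacle: once these observations are in place the conclusions are immediate. The one point that needs attention is the bookkeeping of variances, namely deciding which of the two ambient module categories plays the role of ``$A$-mod'' in Lemma~\ref{adj pair_lem}, and hence on which side a (co)resolving subcategory must be imposed. For part (1), $F=\cok_i(-)$ is defined on $\m$-mod, so the resolving subcategory $\smon(Q,I,A)$ must lie in its source $\m$-mod; for part (2), $G=(-)_i$ is defined on $\m$-mod, so a coresolving subcategory must lie in its source $\m$-mod, and taking all of $\m$-mod there makes the hypotheses automatic.
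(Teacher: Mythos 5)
Your proposal is correct and is precisely the argument the paper intends: the paper states the lemma as ``an immediate application'' of Lemma \ref{adj pair_lem} to the two adjoint pairs, and you have supplied exactly the right verifications (that $\smon(Q,I,A)$ is resolving in $\m$-mod via Lemma \ref{ext close_lem}, that $\cok_i(-)$ is exact there and preserves projectives, and that $(-)_i$ is exact on all of $\m$-mod and preserves injectives). The alternative route for part (2) via the projectivity of $A\otimes_k P(i)$ is also valid, but nothing beyond the paper's implicit argument is needed.
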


\subsection{The filtration interpretations}


Recall that for a subset $\mathcal S\subseteq A$-mod, the filtration category $filt(\mathcal S)$ is the smallest extension-closed subcategory which contains $\mathcal S$. From the definition, it is clear that
if $\fY$ is an extension-closed subcategory of $A$-mod, then $$\rep(Q,I,\fY)=filt(\fY\otimes \mathcal S(kQ/I)),$$ where $\fY\otimes \mathcal S(kQ/I)=\{Y\otimes S| Y\in\fY, S \text{ is a simple $kQ/I$-module} \}$.

\vskip10pt

Furthermore, there is also a filtration interpretation of the category $\smon(Q,I,\fX)$.

\begin{thm}\cite[Theorem 4.1]{ZX}\label{filt_thm}
 Let $\fX$ be an extension-closed subcategory of $A$-mod. Then
$$\smon(Q, I, \fX ) = filt(\fX\otimes \Proj(kQ/I)),$$
where $\fX\otimes \Proj(kQ/I)=\{X\otimes P| X\in\fX, P \text{ is a projective $kQ/I$-module} \}$.
\end{thm}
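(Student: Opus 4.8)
The plan is to establish the two inclusions $\filt(\fX\otimes\Proj(kQ/I))\subseteq\smon(Q,I,\fX)$ and $\smon(Q,I,\fX)\subseteq\filt(\fX\otimes\Proj(kQ/I))$ separately, the second being the substantial one.

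For the first inclusion I would argue as follows. Since $\fX$ is extension-closed, so is $\smon(Q,I,\fX)$ by Lemma \ref{ext close_lem}; hence it is enough to check that every generator $X\otimes P$ (with $X\in\fX$, $P$ a projective $kQ/I$-module) lies in $\smon(Q,I,\fX)$, and since each such $P$ is a finite direct sum of the $P(i)$ and $\smon(Q,I,\fX)$ is additive, it suffices to treat $X\otimes P(i)$. It is already recorded in the text that $X\otimes P(i)\in\smon(Q,I,A)$, so only condition (m3) remains: one has $\cok_j(X\otimes P(i))=X\otimes_k\cok_j P(i)$ because $X\otimes_k-$ is exact, and a direct computation in $kQ/I$ shows $\cok_j P(i)\cong k$ when $j=i$ and $\cok_j P(i)=0$ otherwise (the trivial path $e_i$ is the only path starting at $i$ that does not factor through an arrow ending somewhere, $Q$ being acyclic). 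Thus $\cok_j(X\otimes P(i))$ is either $X$ or $0$, both in $\fX$.

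For the second inclusion I would prove the following sharper assertion, which is about $\smon(Q,I,A)$ alone: every $X\in\smon(Q,I,A)$ admits a filtration $0=F^0\subseteq F^1\subseteq\cdots\subseteq F^n=X$ with $F^a/F^{a-1}\cong\cok_{v_a}(X)\otimes P(v_a)$, where $v_1,\dots,v_n$ is an ordering of $Q_0$ with $v_a$ a source of the full subquiver on $\{v_a,\dots,v_n\}$ (so $v_1$ is a source of $Q$, namely $v_1=n$ in the paper's labelling). Granting this, if moreover $X\in\smon(Q,I,\fX)$ then every $\cok_{v_a}(X)\in\fX$ by (m3), so all the subquotients lie in $\fX\otimes\Proj(kQ/I)$ and $X\in\filt(\fX\otimes\Proj(kQ/I))$. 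I would prove the sharper assertion by induction on $|Q_0|=n$, the case $n=1$ being trivial. Put $v:=v_1$, so $\cok_v(X)=X_v$, and let $\phi\colon X_v\otimes P(v)\to X$ be the morphism adjoint to $\mathrm{id}_{X_v}$ under $(-\otimes P(v),(-)_v)$. Because $(X_v\otimes P(v))_j=\bigoplus_{p\in\hp(v\to j)\setminus I}X_v$ with $\phi_j$ acting by $X_p$ on the $p$-th summand, the image $F^1:=\Ima\phi$ satisfies $F^1_j=\sum_{p\colon v\to j}\Ima X_p$; in particular $F^1_v=X_v$, and $X':=X/F^1$ has $X'_v=0$, so $X'$ is a representation of $Q':=Q\setminus\{v\}$ with its induced monomial ideal $I'$. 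Using the right-exactness of $\cok_j$ and $\cok_j(X_v\otimes P(v))=X_v\otimes\cok_j P(v)=0$ for $j\neq v$, one gets $\cok_j(X')=\cok_j(X)$ for every $j\in Q'_0$. The two facts that make the induction go through are (i) $\phi$ is a monomorphism, so $F^1\cong X_v\otimes P(v)=\cok_v(X)\otimes P(v)$, and (ii) $X'\in\smon(Q',I',A)$. Granting (i) and (ii), the inductive hypothesis gives a filtration of $X'$ with subquotients $\cok_{v_a}(X')\otimes P_{Q'}(v_a)=\cok_{v_a}(X)\otimes P_{Q'}(v_a)$ for $a=2,\dots,n$; since any path out of $v_a$ ($a\geq 2$) avoids $v$, one has $P_{Q'}(v_a)=P_Q(v_a)$ as $kQ/I$-modules, and pulling this filtration back along $X\twoheadrightarrow X'$ and prepending $F^1$ produces the required filtration of $X$.

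The heart of the argument — and the step I expect to be the main obstacle — is (i), together with the verification of conditions (m1), (m2) for $X'$ in (ii); both are handled by an induction along the vertex order, working from $v$ downward. For (i), one decomposes $\phi_j$, via condition (m1) for $X$ at $j$, as a direct sum over the arrows $\gamma$ ending at $j$ of composites $\bigoplus_{p'\colon v\to s(\gamma),\ \gamma p'\notin I}X_v\xrightarrow{(X_{p'})}X_{s(\gamma)}\xrightarrow{X_\gamma}\Ima X_\gamma$; injectivity of such a composite follows once one knows that $\phi_{s(\gamma)}$ is injective (an already-treated, "upstream" vertex) and that $\bigl(\sum_{p'\colon v\to s(\gamma),\ \gamma p'\notin I}\Ima X_{p'}\bigr)\cap\Ker X_\gamma=0$; the latter intersection is pinned down by combining the direct-sum condition (m1) at $s(\gamma)$ with the description $\Ker X_\gamma=\sum_{q\in K_\gamma}\Ima X_q$ from (m2), sorting the two families of images according to their last arrows (paths from $v$ that survive post-composition with $\gamma$ versus those that do not). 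For (ii), condition (m1) for $X'$ at $j$ is the $F^1$-quotient of the direct-sum decomposition of $X_j$, and condition (m2) for $X'$ comes down to the identity $X_\alpha^{-1}\bigl(F^1_{e(\alpha)}\bigr)=\Ker X_\alpha+F^1_{s(\alpha)}$ — itself a consequence of (m1) at $e(\alpha)$ — which shows $\Ker X'_\alpha$ is exactly the image of $\Ker X_\alpha$ in $X'_{s(\alpha)}$ and hence equals $\sum_{q\in K'_\alpha}\Ima X'_q$ (the paths through $v$ contributing $0$ in $X'$). These directness-and-kernel bookkeeping steps are elementary but intricate; once they are in place the rest of the proof is formal.
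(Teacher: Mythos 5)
The paper does not actually prove this theorem: it is quoted verbatim from \cite[Theorem 4.1]{ZX}, so your argument can only be compared with that source, and your source-peeling induction is, as far as I can tell, the same strategy used there. The easy inclusion and the architecture of the hard one are correct: the filtration with subquotients $\cok_{v_a}(X)\otimes P(v_a)$, obtained by splitting off a source $v$ via the unit $\phi\colon X_v\otimes P(v)\to X$ of the adjunction $(-\otimes P(v),(-)_v)$, is the right construction, and your reduction of conditions (m1) and (m2) for $X'=X/\Ima\phi$ to the identity $X_\alpha^{-1}(F^1_{e(\alpha)})=\Ker X_\alpha+F^1_{s(\alpha)}$ goes through using (m1) at $e(\alpha)$. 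The one place where your sketch is thinner than the proof needs to be is the injectivity of $\phi$: establishing $\bigl(\sum_{p'\colon\gamma p'\notin I}\Ima X_{p'}\bigr)\cap\Ker X_\gamma=0$ cannot be done with a single application of (m1) at $s(\gamma)$, because a path $p'$ with $\gamma p'\notin I$ and a path $q\in K_\gamma$ may end with the same arrow $\delta$ (for instance $\gamma_1,\gamma_2\colon 4\to 3$, $\beta\colon 3\to 2$, $\alpha\colon 2\to 1$ with $I=(\alpha\beta\gamma_2)$: the surviving path $\beta\gamma_1$ and the path $\beta\gamma_2\in K_\alpha$ both end in $\beta$). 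One must iterate the last-arrow sorting, invoking (m2) for $\delta$ and (m1) at $s(\delta)$ and continuing upstream; the recursion terminates because $Q$ is acyclic and the only path ending at the source $v$ is trivial. With that iteration spelled out, your proof is complete.
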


\subsection{The Cartan-Eilenberg isomorphism}
We will heavily use the the following general result for tensor products of finite dimensional algebras, which is often referred to as the Cartan-Eilenberg isomorphism:
\begin{thm}\cite[Theorem 3.1, p.209, p.205]{CE}\label{CE iso}
Let $A,B$ be finite dimensional algebras over $k$, $\otimes=\otimes_k$. Let $L,M\in A$-{\rm mod} and $U,V\in B$-{\rm mod}. Then there is an isomorphism
$$\Ext^m_{
A\otimes B}(L \otimes U, M\otimes V )\cong
\sum_{p+q=m}
(\Ext^p_
A(L, M) \otimes \Ext^q_
B(U, V )),\forall m \geq 0.$$
\end{thm}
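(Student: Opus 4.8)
The plan is to reduce the statement to two applications of the K\"unneth formula over the field $k$, linked by a Hom--tensor interchange valid for finitely generated projective modules. The only features of the hypotheses that enter are that $k$ is a field --- so every $k$-module is flat and all K\"unneth-correction $\mathrm{Tor}$-terms vanish --- and that $A,B$ are finite dimensional with $L,U$ finitely generated, which guarantees that the modules in the resolutions below can be chosen finitely generated projective.

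First I would fix projective resolutions $P_\bullet\to L$ over $A$ and $Q_\bullet\to U$ over $B$ with each $P_p$ and each $Q_q$ finitely generated projective, and form the tensor product complex $P_\bullet\otimes_k Q_\bullet$, whose degree-$n$ term is $\bigoplus_{p+q=n}P_p\otimes_k Q_q$ with the usual signed differential. Each such term is a finitely generated projective $A\otimes_k B$-module, being a direct summand of a module of the form $(A^{a})\otimes_k(B^{b})\cong(A\otimes_k B)^{ab}$. Regarding $P_\bullet\otimes_k Q_\bullet$ as a complex of $k$-vector spaces and applying the K\"unneth formula over $k$, its homology in degree $n$ is $\bigoplus_{p+q=n}H_p(P_\bullet)\otimes_k H_q(Q_\bullet)$, which is $L\otimes_k U$ for $n=0$ and vanishes otherwise; since the degree-$0$ identification is induced by the $A\otimes_k B$-linear augmentation, $P_\bullet\otimes_k Q_\bullet\to L\otimes_k U$ is a projective $A\otimes_k B$-resolution. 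Consequently $\Ext^m_{A\otimes B}(L\otimes U,\,M\otimes V)$ is the $m$-th cohomology of the complex $\Hom_{A\otimes B}(P_\bullet\otimes_k Q_\bullet,\,M\otimes_k V)$.

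The second step is the Hom--tensor interchange: for finitely generated projective $P_p$ over $A$ and $Q_q$ over $B$, the natural map
\[
\Hom_A(P_p,M)\otimes_k\Hom_B(Q_q,V)\;\longrightarrow\;\Hom_{A\otimes B}(P_p\otimes_k Q_q,\,M\otimes_k V),\qquad f\otimes g\mapsto f\otimes g,
\]
is an isomorphism. One checks this directly when $P_p=A$ and $Q_q=B$ (both sides are $M\otimes_k V$), extends it to finitely generated free modules by additivity, and then to finitely generated projectives by passing to direct summands; naturality makes these identifications compatible with the differentials up to the standard Koszul signs. Hence $\Hom_{A\otimes B}(P_\bullet\otimes_k Q_\bullet,\,M\otimes_k V)$ is isomorphic as a complex to the total complex of the first-quadrant double complex with $(p,q)$-entry $\Hom_A(P_p,M)\otimes_k\Hom_B(Q_q,V)$, the $\Hom$-complexes being indexed cohomologically.

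Finally I would apply the K\"unneth formula over $k$ once more, now to the cochain complexes $C^\bullet=\Hom_A(P_\bullet,M)$ and $D^\bullet=\Hom_B(Q_\bullet,V)$: over a field,
\[
H^m\big(\operatorname{Tot}(C^\bullet\otimes_k D^\bullet)\big)\;\cong\;\bigoplus_{p+q=m}H^p(C^\bullet)\otimes_k H^q(D^\bullet)\;=\;\bigoplus_{p+q=m}\Ext^p_A(L,M)\otimes_k\Ext^q_B(U,V).
\]
Composing the three isomorphisms produced above gives the claimed formula; a routine comparison argument shows it is independent of the chosen resolutions (and natural in $L,M,U,V$), though existence is all that is asserted. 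The main obstacle is purely bookkeeping: one must track the sign conventions through the differential of $P_\bullet\otimes_k Q_\bullet$ and through the interchange so that every identification is an honest chain map, and one must genuinely invoke the finiteness hypotheses, since the interchange isomorphism --- and with it the whole argument --- fails without finite generation. Granting that care, every step is forced by $k$ being a field, which is precisely what makes both instances of the K\"unneth formula degenerate.
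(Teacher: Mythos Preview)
Your argument is correct and is precisely the classical proof: build a projective $A\otimes_k B$-resolution of $L\otimes_k U$ as the tensor product of chosen resolutions (K\"unneth over the field $k$ forces acyclicity), use the Hom--tensor interchange for finitely generated projectives to identify the Hom complex with a tensor of Hom complexes, and apply K\"unneth once more. The paper does not supply its own proof of this statement; it simply quotes the result from Cartan--Eilenberg \cite[Theorem~3.1, p.~209]{CE}, where exactly this argument appears. So there is nothing to compare against beyond noting that your write-up reproduces the cited source faithfully.
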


We mention a fact about the projective dimensions of tensor products for later applications.
\begin{cor}[to Thoerem \ref{CE iso}]\label{CE cor}
Let $L$ be an $A$-module and $U$ be a $B$-module. Then $$\pd_{\m}L\otimes_k U=\pd_A L+\pd_{B} U.$$
\end{cor}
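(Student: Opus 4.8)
The plan is to establish the two inequalities $\pd_\m(L\otimes_k U)\le \pd_A L+\pd_B U$ and $\pd_\m(L\otimes_k U)\ge \pd_A L+\pd_B U$ separately, where $\m=A\otimes_k B$. I may assume $L$ and $U$ are nonzero, since otherwise $L\otimes_k U=0$ and the formula holds with the convention $\pd 0=-\infty$.

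For the inequality ``$\le$'' I would assume $p:=\pd_A L$ and $q:=\pd_B U$ are both finite and take finite projective resolutions $P_\bullet\to L$ over $A$ of length $p$ and $Q_\bullet\to U$ over $B$ of length $q$. The total complex $T_\bullet=\mathrm{Tot}(P_\bullet\otimes_k Q_\bullet)$ has degree-$n$ term $\bigoplus_{i+j=n}P_i\otimes_k Q_j$, which is a projective $\m$-module (a direct summand of a free $A\otimes_k B$-module), and it is concentrated in degrees $0,\dots,p+q$. The key observation is that, because $k$ is a field, every $k$-module is flat, so the Künneth formula carries no Tor-correction terms and yields $H_n(T_\bullet)\cong\bigoplus_{i+j=n}H_i(P_\bullet)\otimes_k H_j(Q_\bullet)$, which is $L\otimes_k U$ for $n=0$ and zero otherwise. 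Hence $T_\bullet$ is a projective resolution of $L\otimes_k U$ of length at most $p+q$.

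For the inequality ``$\ge$'' I would invoke Theorem \ref{CE iso}. Choose $M\in A$-mod and $V\in B$-mod with $\Ext^p_A(L,M)\ne 0$ and $\Ext^q_B(U,V)\ne 0$ (such modules exist by definition of projective dimension). Then
$$\Ext^{p+q}_\m(L\otimes_k U,\,M\otimes_k V)\cong\bigoplus_{i+j=p+q}\Ext^i_A(L,M)\otimes_k\Ext^j_B(U,V).$$
In every summand with $(i,j)\ne(p,q)$ either $i>p$, so $\Ext^i_A(L,M)=0$, or $j>q$, so $\Ext^j_B(U,V)=0$; thus the only surviving summand is $\Ext^p_A(L,M)\otimes_k\Ext^q_B(U,V)$, a tensor product over the field $k$ of two nonzero vector spaces, hence nonzero. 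Therefore $\pd_\m(L\otimes_k U)\ge p+q$, giving equality when both $\pd_A L$ and $\pd_B U$ are finite. If instead $\pd_A L=\infty$, then for each $n$ I would pick $M$ with $\Ext^n_A(L,M)\ne 0$ and take $V=U$: the $(i,j)=(n,0)$ summand of the Cartan--Eilenberg decomposition equals $\Ext^n_A(L,M)\otimes_k\Hom_B(U,U)$, which is nonzero since $U\ne 0$, so $\pd_\m(L\otimes_k U)=\infty$; the case $\pd_B U=\infty$ is symmetric.

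The only mildly delicate point is the exactness of the total complex $T_\bullet$, which rests entirely on $k$ being a field so that the Künneth formula has no derived correction; the rest is bookkeeping, notably the fact that the direct sum appearing in the Cartan--Eilenberg isomorphism admits no cancellation, so a single nonzero summand suffices to conclude.
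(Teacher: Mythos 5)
Your proof is correct and follows essentially the same route as the paper: the K\"unneth formula (via the total complex of the tensor product of resolutions) for the upper bound, and the Cartan--Eilenberg isomorphism applied to a top-degree nonvanishing $\Ext$ pair for the lower bound. The only difference is that you spell out the infinite-dimension cases explicitly, which the paper's proof leaves implicit.
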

\begin{proof}
 Assume $\pd_A L=s$ and $\pd_B U=t$. On one hand, by K\"unneth formula \cite{We}, $L\otimes U$ has a projective resolution of length $s+t$. Hence $\pd_\m L\otimes U\leq s+t$.
   On the other hand, since there are modules $S$ and $T$ such that $\Ext^s_A(L,S)\neq 0$ and $\Ext^t_B(U,T)\neq0$,
  $\Ext^{s+t}_{\m}(L \otimes U, S\otimes T )$ has a summand $\Ext^s_A(L,S)\otimes\Ext^t_B(U,T)\neq 0$. Hence $\pd_{\m} L\otimes U\geq s+t$.
\end{proof}

\section {\bf The separated monic  correspondence}
 Let $A$ be a finite dimensional algebra, $Q$ a finite acyclic quiver, $I$ an
admissible ideal of the path algebra $kQ$ generated by monomial relations and $\m=A\otimes_kkQ/I$. In this section, we are going to investigate the map $\fX\mapsto \smon(Q,I,\fX)$ for various resolving subcategories $\fX$.  We start from a brief recollection for a reciprocity of separated monic representations studied in \cite{Z,ZX}.

\subsection{A Reciprocity of separated monic representations}

\begin{thm}\cite[Theorem 2.6]{ZX}\label{smon corr_thm}
If $T$ is a cotilting $A$-module, then $${\rm smon}(Q,I, \leftidx{^\perp}T)=\leftidx{^\perp}(T\otimes kQ/I).$$
\end{thm}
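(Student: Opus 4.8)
The plan is to prove the two inclusions $\smon(Q,I,\leftidx^{\perp}T)\subseteq\leftidx^{\perp}(T\otimes kQ/I)$ and $\leftidx^{\perp}(T\otimes kQ/I)\subseteq\smon(Q,I,\leftidx^{\perp}T)$ separately, the second being where the real work lies.

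For $\smon(Q,I,\leftidx^{\perp}T)\subseteq\leftidx^{\perp}(T\otimes kQ/I)$, I would use the filtration interpretation of Theorem \ref{filt_thm}, namely $\smon(Q,I,\leftidx^{\perp}T)=\filt\!\big(\leftidx^{\perp}T\otimes\Proj(kQ/I)\big)$. Since $\leftidx^{\perp}(T\otimes kQ/I)$ is closed under extensions (any $\leftidx^{\perp}(\,\cdot\,)$ is), it suffices to show that each generator $X\otimes P(i)$ with $X\in\leftidx^{\perp}T$ and $i\in Q_0$ lies in $\leftidx^{\perp}(T\otimes kQ/I)$. The adjoint pair $(-\otimes P(i),(-)_i)$ handles this: by Lemma \ref{adj ext_lem}(2), $\Ext^{k}_{\m}(X\otimes P(i),T\otimes kQ/I)\cong\Ext^{k}_{A}\!\big(X,(T\otimes kQ/I)_i\big)$, and $(T\otimes kQ/I)_i\cong T\otimes_k(kQ/I)_i$ is a finite nonzero direct sum of copies of $T$, so this group vanishes for $k\geq 1$ because $X\in\leftidx^{\perp}T$.

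For the reverse inclusion, fix $Y\in\leftidx^{\perp}(T\otimes kQ/I)$. Writing $kQ/I=\bigoplus_{j}P(j)$ gives $\Ext^{k}_{\m}(Y,T\otimes P(j))=0$ for all $j\in Q_0$ and $k\geq1$. The first step is to upgrade this to $\Ext^{k}_{\m}(Y,T\otimes N)=0$ for \emph{every} $kQ/I$-module $N$: since $N$ is filtered by the simple $kQ/I$-modules $S(l)$, it suffices to prove $\Ext^{k}_{\m}(Y,T\otimes S(l))=0$, and this follows by induction on $l$ in the chosen order (so vertex $1$ is a sink and $P(1)=S(1)$), using the short exact sequences $0\to\rad P(l)\to P(l)\to S(l)\to 0$ and the fact that $\rad P(l)$ is supported at vertices $<l$, hence filtered by smaller simples. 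Now, granting for the moment that $Y$ is separated monic: Lemma \ref{adj ext_lem}(1) gives $\Ext^{k}_{A}(\cok_i(Y),T)\cong\Ext^{k}_{\m}(Y,T\otimes S(i))=0$, so $\cok_i(Y)\in\leftidx^{\perp}T$ for all $i$, which is condition (m3) over $\leftidx^{\perp}T$; a dual computation via the companion adjoint pair $((-)_i,-\otimes I(i))$ and the isomorphism $\Ext^{k}_{\m}(Y,T\otimes I(i))\cong\Ext^{k}_{A}(Y_i,T)$ shows moreover that $Y_i\in\leftidx^{\perp}T$ for each $i$. Together with (m1) and (m2) this puts $Y$ in $\smon(Q,I,\leftidx^{\perp}T)$.

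The crux --- and the step I expect to be the hardest --- is therefore to show that every $Y\in\leftidx^{\perp}(T\otimes kQ/I)$ is automatically separated monic, i.e.\ satisfies (m1) and (m2): these are conditions on the kernels and images of the structure maps $Y_\alpha$, which must be extracted from the $\Ext$-vanishing alone. I would do this by induction on $|Q_0|$ using the triangular-matrix presentation of $\m$ obtained by deleting a source vertex $n$, writing $\m\cong\left(\begin{smallmatrix}\m'&M'\\0&A\end{smallmatrix}\right)$ with $\m'=A\otimes kQ'/I'$ and $M'$ the $\m'$-$A$-bimodule built from the arrows out of $n$ and twisted by the monomial data $K_\alpha$ of $(3.1)$, so that $\m$-modules are triples $(N,W,\psi)$ with $N$ an $\m'$-module, $W$ an $A$-module and $\psi\colon M'\otimes_A W\to N$; in these terms $T\otimes kQ/I$ becomes $\big(T\otimes kQ'/I'\oplus T\otimes M',\ T,\ \psi_0\big)$. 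Computing $\Ext^{\ast}_{\m}\!\big((N,W,\psi),T\otimes kQ/I\big)$ in the spirit of Theorem \ref{matrixring_thm} should show that its vanishing is equivalent to $W\in\leftidx^{\perp}T$, $N\in\leftidx^{\perp}(T\otimes kQ'/I')$ over $\m'$, and $\psi$ a monomorphism with admissible cokernel --- which unwinds to exactly (m1)--(m3) at the vertex $n$; the inductive hypothesis $\leftidx^{\perp}(T\otimes kQ'/I')=\smon(Q',I',\leftidx^{\perp}T)$ over $\m'$ then finishes. The delicate points I anticipate are: the bimodule $M'$ need not have finite projective dimension over $\m'$, so a direct dimension-shift built on the Cartan--Eilenberg isomorphism (Theorem \ref{CE iso}) will likely have to replace the literal hypotheses of Theorem \ref{matrixring_thm}; and the monomial relations $K_\alpha$ must be tracked carefully so that "$\psi$ monic with admissible cokernel" really does unfold into (m1) and (m2). (At the coarser level of resolving subcategories, the Auslander--Reiten correspondence of Theorem \ref{cotilting induce} shows a priori that $\smon(Q,I,\leftidx^{\perp}T)=\leftidx^{\perp}T^{*}$ for a basic cotilting $\m$-module $T^{*}$, so the assertion amounts to identifying $\leftidx^{\perp}T^{*}$ with $\leftidx^{\perp}(T\otimes kQ/I)$; but actually producing $T^{*}$, or bypassing it, requires precisely the analysis above.)
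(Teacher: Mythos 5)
The paper offers no proof of this statement to compare yours against: it is quoted verbatim from \cite[Theorem 2.6]{ZX}, so your attempt must be judged as a free-standing reconstruction. The portions you actually carry out are correct. The inclusion $\smon(Q,I,\leftidx{^\perp}T)\subseteq\leftidx{^\perp}(T\otimes kQ/I)$ via Theorem \ref{filt_thm} and Lemma \ref{adj ext_lem}(2) is complete (one could equally apply Theorem \ref{CE iso} directly to the generators $X\otimes P(i)$). The bootstrap from $\Ext^k_{\m}(Y,T\otimes P(j))=0$ to $\Ext^k_{\m}(Y,T\otimes N)=0$ for every $kQ/I$-module $N$ is sound, as is the deduction of (m3) and of $Y_i\in\leftidx{^\perp}T$ \emph{once $Y$ is known to be separated monic} (though the adjunction $((-)_i,-\otimes I(i))$ is not stated in the paper and needs the one-line dual verification, or can be replaced by the fact that in a separated monic representation each $Y_i$ is filtered by the $\cok_j(Y)$).

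The genuine gap is exactly where you locate it: you never prove that $Y\in\leftidx{^\perp}(T\otimes kQ/I)$ satisfies (m1) and (m2). The triangular-matrix induction on $|Q_0|$ is the right shape of argument, but the decisive computation is left at the level of ``should show''. In particular, Theorem \ref{matrixring_thm} cannot be invoked for it: that theorem characterizes $\leftidx{^\perp}\m$, i.e.\ $\Ext$-vanishing against the \emph{regular} module, whereas you need vanishing against $T\otimes kQ/I$, and the analogue with a cotilting module in place of the regular module is precisely the content that has to be proved from scratch --- this is where all the work of \cite{ZX} lies, and it is absent here. Two secondary remarks. First, your worry that $\pd_{\m'}M'$ may be infinite is unfounded: $M'=A\otimes\rad P(n)$, and Corollary \ref{CE cor} gives $\pd_{\m'}M'=\pd_{kQ'/I'}\rad P(n)<\infty$ because $kQ'/I'$ has an acyclic quiver, while $D(M'_A)$ is injective; so the hypotheses of Theorem \ref{matrixring_thm} are not the obstruction --- its conclusion simply addresses the wrong perpendicular category. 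Second, a reduction worth making explicit: since $T$ is cotilting, $DA$ has a finite $\add T$-resolution, so tensoring with $kQ/I$ and dimension shifting yields $\leftidx{^\perp}(T\otimes kQ/I)\subseteq\leftidx{^\perp}(DA\otimes kQ/I)$; hence the monicity claim only needs to be established for $T=DA$. But beware that this special case is exactly Corollary \ref{smon corr_cor}, which the paper \emph{derives from} the theorem you are proving, so quoting it would be circular.
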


\begin{rem}
Since $Q$ is an acyclic quiver, a $\m$-module $T\otimes kQ/I$ is cotilting if and only if $T$ is a cotilting module. Therefore, the category $\mathcal T=\leftidx{^\perp}(T\otimes kQ/I)$ above satisfies $\widehat{\mathcal T}=\m$-mod (see Theorem \ref{cotilting induce} above).
\end{rem}

Taking $T=DA$ and combine Remark \ref{KS cotorsion}, one can easily see the following:
\begin{cor}\label{smon corr_cor}
The subcategory ${\rm smon}(Q,I, A)=\leftidx{^\perp}(DA\otimes kQ/I)$ is functorially finite.
\end{cor}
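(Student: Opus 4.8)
The plan is to deduce this directly from Theorem \ref{smon corr_thm} by taking the cotilting module to be $T=DA$. First I would check that $DA$ is indeed a cotilting $A$-module: since $DA$ is an injective cogenerator of $A$-mod, we have $\id DA=0<\infty$, $\Ext^i(DA,DA)=0$ for all $i>0$, and the trivial exact sequence $0\to DA\to DA\to 0$ verifies the third axiom (with $D\Gamma=DA$ in the notation of the definition). Hence $DA$ is cotilting.

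Next, because $DA$ is injective, $\Ext^i(-,DA)=0$ for every $i\geq 1$, so $\leftidx{^\perp}(DA)=\bigcap_{i\geq 1}\ker\Ext^i(-,DA)=A$-mod. Applying Theorem \ref{smon corr_thm} with $T=DA$ then gives
$$\smon(Q,I,A)=\smon(Q,I,\leftidx{^\perp}(DA))=\leftidx{^\perp}(DA\otimes kQ/I),$$
which is the asserted identity.

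Finally I would invoke the remark following Theorem \ref{smon corr_thm}: since $DA$ is cotilting over $A$ and $Q$ is acyclic, $DA\otimes kQ/I$ is a cotilting $\m$-module. By Theorem \ref{cotilting induce}(1), the pair $(\leftidx{^\perp}(DA\otimes kQ/I),\widehat{\add(DA\otimes kQ/I)})$ is a complete hereditary cotorsion pair in $\m$-mod, and Remark \ref{KS cotorsion} (that is, \cite[Corollary 2.6]{KS}) says that both members of a complete hereditary cotorsion pair are functorially finite. In particular $\smon(Q,I,A)=\leftidx{^\perp}(DA\otimes kQ/I)$ is functorially finite. There is no genuine obstacle here, as the content lies entirely in the quoted results; the only steps needing a line of verification are that $DA$ satisfies the three cotilting axioms and that its left $\Ext$-orthogonal is all of $A$-mod, both of which are immediate from the injectivity of $DA$.
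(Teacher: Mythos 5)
Your proposal is correct and follows exactly the paper's intended argument: take $T=DA$ in Theorem \ref{smon corr_thm}, use that $\leftidx{^\perp}(DA)=A\text{-mod}$ to get the identity, and then combine the cotilting property of $DA\otimes kQ/I$ with Theorem \ref{cotilting induce} and Remark \ref{KS cotorsion} to obtain functorial finiteness. The extra verifications you spell out (the cotilting axioms for $DA$) are exactly the routine checks the paper leaves implicit.
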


\subsection{Hereditary cotorsion pairs}
In this subsection, we will investigate the separated monic correspondence for hereditary cotorsion pairs.
 \begin{prop}\label{cotorsion prop 1}
If $(\fX,\fY)$ is a hereditary cotorsion pair, then $({\rm smon}(Q, I, \fX ), {\rm rep}(Q, I, \fY))$ is a hereditary cotorsion pair.
\end{prop}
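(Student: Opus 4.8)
The plan is to verify directly the two defining conditions of a cotorsion pair for the candidate pair $(\smon(Q,I,\fX),\rep(Q,I,\fY))$, using the filtration interpretations (Theorems~\ref{filt_thm} and the remark preceding it) together with the key Lemma~\ref{adj ext_lem}. First I would establish the $\Ext^1$-orthogonality $\Ext^1_\m(\smon(Q,I,\fX),\rep(Q,I,\fY))=0$ — in fact the full orthogonality $\Ext^i_\m=0$ for all $i\geq 1$, which also gives the ``hereditary'' part for free once the cotorsion pair is established. By Theorem~\ref{filt_thm}, $\smon(Q,I,\fX)=\filt(\fX\otimes\Proj(kQ/I))$, and $\rep(Q,I,\fY)=\filt(\fY\otimes\mathcal S(kQ/I))$; since $\filt$ is generated under extensions and $\Ext^i(-,-)$ is additive and takes short exact sequences in either variable to long exact sequences, it suffices to check $\Ext^i_\m(X\otimes P, Y\otimes S)=0$ for $X\in\fX$, $Y\in\fY$, $P$ a projective $kQ/I$-module, $S$ a simple $kQ/I$-module. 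By the Cartan--Eilenberg isomorphism (Theorem~\ref{CE iso}), this $\Ext$ group is a sum of terms $\Ext^p_A(X,Y)\otimes\Ext^q_{kQ/I}(P,S)$ with $p+q=i$; the terms with $q\geq 1$ vanish because $P$ is projective over $kQ/I$, and the term with $q=0$ is $\Ext^i_A(X,Y)\otimes\Hom_{kQ/I}(P,S)$, which vanishes because $\Ext^i_A(X,Y)=0$ as $(\fX,\fY)$ is a hereditary cotorsion pair. This disposes of one half.

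Next I would show $\smon(Q,I,\fX)={}^\perp(\rep(Q,I,\fY))$, i.e. the reverse inclusion: if $M\in\m$-mod satisfies $\Ext^1_\m(M,\rep(Q,I,\fY))=0$, then $M$ is separated monic with all cokernels $\cok_i(M)\in\fX$. The natural approach is a two-stage argument. Stage one: show $M$ is separated monic. For this I would use that $\rep(Q,I,\fY)$ contains all modules of the form $Y\otimes S(i)$ with $Y\in\fY$, in particular (taking $Y$ to be any projective $A$-module, which lies in $\fX\cap\fY$? — no, $\fY$ is coresolving, so $\fY$ contains all injectives $DA$, hence in particular $Y\otimes S(i)$ for $Y$ injective) — here I'd want to feed in modules detecting failure of separated monicity. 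In fact the cleaner route: Corollary~\ref{smon corr_cor} says $\smon(Q,I,A)={}^\perp(DA\otimes kQ/I)$, and $DA\otimes kQ/I=\bigoplus_i DA\otimes I(i)$ whose summands lie in $\rep(Q,I,\fY)$ since $DA\in\fY$ and $\fY$ is coresolving (so closed under the filtration by $DA\otimes S(i)$'s that builds $DA\otimes I(i)$). Hence $\Ext^1_\m(M,\rep(Q,I,\fY))=0$ forces $M\in{}^\perp(DA\otimes kQ/I)=\smon(Q,I,A)$, so $M$ is already separated monic. Stage two: now that $M\in\smon(Q,I,A)$, use Lemma~\ref{adj ext_lem}(1): $\Ext^k_A(\cok_i(M),X')\cong\Ext^k_\m(M,X'\otimes S(i))$ for any $X'\in\m$-mod; taking $X'\in\fY$ (viewed in $A$-mod, with $X'\otimes S(i)\in\rep(Q,I,\fY)$) gives $\Ext^1_A(\cok_i(M),\fY)=0$ for every $i$, whence $\cok_i(M)\in{}^\perp\fY=\fX$ since $(\fX,\fY)$ is a cotorsion pair. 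Therefore $M\in\smon(Q,I,\fX)$. Combined with Proposition-side inclusion $\smon(Q,I,\fX)\subseteq{}^\perp(\rep(Q,I,\fY))$ already proved, we get equality.

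Finally I would show $\rep(Q,I,\fY)=(\smon(Q,I,\fX))^\perp$. The inclusion $\subseteq$ is the orthogonality already established. For $\supseteq$: suppose $N\in\m$-mod with $\Ext^1_\m(\smon(Q,I,\fX),N)=0$. I want each $N_i\in\fY$, and then some representation-theoretic fact to conclude $N\in\rep(Q,I,\fY)$ — but actually $\rep(Q,I,\fY)$ is just the class of all representations with every vertex module in $\fY$ when $\fY$ is closed under kernels of epis? No: $\rep(Q,I,\fY)$ by definition is representations $(N_i,N_\alpha)$ with $N_i\in\fY$; so it suffices to show $N_i\in\fY$ for all $i$. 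Use Lemma~\ref{adj ext_lem}(2): $\Ext^1_\m(X\otimes P(i),N)\cong\Ext^1_A(X,N_i)$ for $X\in A$-mod; and $X\otimes P(i)\in\smon(Q,I,\fX)$ whenever $X\in\fX$ (by Theorem~\ref{filt_thm}, since $X\otimes P(i)\in\fX\otimes\Proj(kQ/I)$). Hence $\Ext^1_A(\fX,N_i)=0$, so $N_i\in\fY$ as $(\fX,\fY)$ is a cotorsion pair. This completes all four inclusions, and the hereditary property follows from the vanishing of all higher $\Ext$ established in the first paragraph.

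The main obstacle I anticipate is \emph{stage one of the reverse inclusion} — proving that $\Ext^1$-orthogonality to $\rep(Q,I,\fY)$ forces a module to be separated monic. The slick argument via Corollary~\ref{smon corr_cor} hinges on checking carefully that $DA\otimes kQ/I$, equivalently each $DA\otimes I(j)$, genuinely lies in $\rep(Q,I,\fY)$; this needs $DA\in\fY$ (clear, since $\fY$ is coresolving hence contains all injective $A$-modules) and that $DA\otimes I(j)$ is filtered by modules $DA\otimes S(i)\in\fY\otimes\mathcal S(kQ/I)$, which is exactly the filtration interpretation $\rep(Q,I,\fY)=\filt(\fY\otimes\mathcal S(kQ/I))$ applied to the $kQ/I$-socle series of $I(j)$. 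Once that is in place the rest is bookkeeping with the two adjoint pairs. A secondary subtlety is making sure all the ``$\Ext^1=0$ implies $\Ext^i=0$'' reductions are legitimate — this is handled uniformly by the hereditary hypothesis on $(\fX,\fY)$ via Lemma~\ref{res perp_lem} and dimension shifting, which is why I prefer to prove the full higher-$\Ext$ vanishing at the outset rather than just the degree-one statement.
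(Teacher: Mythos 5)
Your proposal is correct and follows essentially the same route as the paper's proof: the Cartan--Eilenberg isomorphism plus the filtration interpretations for the orthogonality, Corollary~\ref{smon corr_cor} (via $DA\otimes kQ/I\in\rep(Q,I,\fY)$) to force separated monicity in the reverse inclusion, and the two halves of Lemma~\ref{adj ext_lem} to identify the cokernels and vertex modules. The subtleties you flag at the end (membership of $DA\otimes kQ/I$ in $\rep(Q,I,\fY)$, and the passage between $\Ext^1$-orthogonality and full orthogonality via Lemma~\ref{res perp_lem}) are exactly the points the paper also handles, and you handle them correctly.
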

\begin{proof}
For the sake of convenience, denote by $\widetilde\fX={\rm smon}(Q, I, \fX )$ and $\widetilde \fY={\rm rep}(Q, I, \fY)$. Notice that $(\fX,\fY)$ being a hereditary cotorsion pair means $\fX^\perp=\fY$ and $\leftidx{^\perp}\fY=\fX$ (Lemma \ref{res perp_lem}). We prove the statement as the following procedures:

\vskip5pt

\noindent {\bf Claim 1:} ${\widetilde\fX}\subseteq \leftidx{^\perp}{\widetilde\fY} $ and ${\widetilde\fY}\subseteq {\widetilde\fX}^\perp$.

From Theorem \ref{CE iso}, for any $X\otimes P(i)\in\widetilde\fX$ and $Y\otimes S(j)\in\widetilde \fY$,
$$\Ext^k(X\otimes P(i), Y\otimes S(j))\cong \Ext^k(X , Y)\otimes \Hom(P(i),S(j))=0, \text{for\ } k>0.$$
Since $\widetilde\fX= filt(\fX\otimes \hp (kQ/I))$ and $\widetilde \fY=filt(\fY\otimes \mathcal S(kQ/I))$,  it follows that $\Ext^k(\widetilde \fX, \widetilde \fY) =0, \text{for\ } k>0,$ which proves Claim 1.

\vskip5pt

\noindent {\bf Claim 2:} $\leftidx{^\perp}{\widetilde\fY}\subseteq {\widetilde\fX}$.

Let $X\in \leftidx{^\perp}{\widetilde\fY}$. First, since $\fY$ contains injective $A$-modules, $\leftidx{^\perp}{\widetilde\fY}\subseteq \leftidx{^\perp}(DA\otimes kQ/I)={\rm smon}(Q, I, A )$, where the second equality follows from Corollary \ref{smon corr_cor}. So $X\in {\rm smon}(Q, I, A )$.

Second, by Lemma \ref{adj ext_lem} (1), for any $Y\in\fY$, there is an isomorphism $$\Ext^k_A(\cok_i (X), Y )\cong\Ext^k_\m(X, Y\otimes S(i) )=0, k>0,$$ which implies that
$\cok_i(X)\in \leftidx{^\perp}\fY=\fX$.
Therefore $X\in\smon(Q,I,\fX)=\widetilde{\fX}$.

 \vskip5pt

\noindent {\bf Claim 3:} $ {\widetilde\fX}^\perp \subseteq{\widetilde\fY}$.

Let $Y\in {\widetilde\fX}^\perp$.
  Since $X\otimes P(i)\in\widetilde\fX$ for any $X\in\fX$, it follows from Lemma \ref{adj ext_lem} (2) that  $$\Ext_A^k(X,Y_i)\cong \Ext_\m^k(X\otimes P(i),Y)=0,  k>0.$$

  Therefore $Y_i\in\fX^\perp=\fY$ and hence $Y\in\rep(Q,I,\fY)=\widetilde\fY$.

 \vskip5pt

So far, we have proved $ {\widetilde\fX}=\leftidx{^\perp}{\widetilde\fY}$ and $ {\widetilde\fY}={\widetilde\fX}^\perp $. Finally, we can finish the proof as below. According to Lemma \ref{ext close_lem}, $\fX$ is resolving implies so is $\widetilde\fX$. Hence by Lemma \ref{res perp_lem}, $\widetilde\fY={\widetilde\fX}^\perp=\ker \Ext^1(\widetilde\fX, -)$ is a coresolving subcategory and  $\widetilde\fX=\leftidx{^\perp}{\widetilde\fY}=\ker \Ext^1(-,\widetilde\fY )$. Therefore  $(\widetilde\fX, \widetilde\fY)$ is a hereditary cotorsion pair.
 \end{proof}

Next, we will show the converse of Proposition \ref{cotorsion prop 1}.

\begin{prop}\label{cotorsion prop 2}
If ${\rm smon}(Q, I, \fX )$ is a resolving cotorsion class for some subcategories $\fX$ in $A$-mod, then $\fX$ is a resolving cotorsion class.
\end{prop}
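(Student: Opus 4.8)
The goal is to reverse-engineer the hypotheses on $\fX$ from those on $\widetilde\fX := \smon(Q,I,\fX)$. By Lemma~\ref{ext close_lem}, $\smon(Q,I,\fX)$ being resolving already forces $\fX$ to be resolving; so the only real content is: if $\widetilde\fX$ is a cotorsion class (i.e.\ $\widetilde\fX = \leftidx{^\perp}(\widetilde\fX^\perp)$, equivalently $\widetilde\fX = \ker\Ext^1(-,\fY')$ for $\fY' = \widetilde\fX^\perp$), then $\fX$ is a cotorsion class. The natural candidate for the partner class is $\fY := \fX^\perp$. Since $\fX$ is resolving, Lemma~\ref{res perp_lem} tells us $\fY = \fX^\perp = \ker\Ext^1(\fX,-)$ is automatically coresolving, and we always have $\fX \subseteq \leftidx{^\perp}\fY$. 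So the heart of the matter is the reverse inclusion $\leftidx{^\perp}\fY \subseteq \fX$; once that holds, $(\fX,\fY)$ is a hereditary cotorsion pair by Lemma~\ref{res perp_lem}, in particular $\fX$ is a resolving cotorsion class.

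First I would pin down $\widetilde\fX^\perp$. Using Lemma~\ref{adj ext_lem}(2), for $Y\in\m$-mod we have $\Ext^k_\m(X\otimes P(i),Y)\cong \Ext^k_A(X,Y_i)$; since $\widetilde\fX = \smon(Q,I,\fX) = \filt(\fX\otimes\Proj(kQ/I))$ (Theorem~\ref{filt_thm}) and every projective $kQ/I$-module is a summand of a sum of the $P(i)$, it follows that $Y\in\widetilde\fX^\perp$ iff $Y_i\in\fX^\perp=\fY$ for all $i$, i.e.\ $\widetilde\fX^\perp = \rep(Q,I,\fY)=\widetilde\fY$ in the notation of Proposition~\ref{cotorsion prop 1}. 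Thus the hypothesis ``$\widetilde\fX$ is a cotorsion class'' reads precisely $\widetilde\fX = \leftidx{^\perp}\widetilde\fY$. Now take $M\in\leftidx{^\perp}\fY$ (an $A$-module). The plan is to produce, from $M$, an object of $\widetilde\fX = \leftidx{^\perp}\widetilde\fY$ whose relevant data recovers $M$ and forces $M\in\fX$. The cleanest device is the functor $-\otimes P(i)$ for a source vertex $i$: for a source, $M\otimes P(i)$ has $\cok_j$ equal to a sum of copies of $M$ at vertices reachable from $i$ and $0$ elsewhere — or, even more simply, pick a sink-free situation and use that $M\otimes kQ/I$-type constructions sit in $\smon$. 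Concretely, I would check that $M\in\leftidx{^\perp}\fY$ implies $M\otimes P(i)\in\leftidx{^\perp}\widetilde\fY = \widetilde\fX$: indeed by Lemma~\ref{adj ext_lem}(2), $\Ext^k_\m(M\otimes P(i),Y) \cong \Ext^k_A(M,Y_i)$, which vanishes for all $Y\in\widetilde\fY$ and all $k>0$ since $Y_i\in\fY$. Hence $M\otimes P(i)\in\widetilde\fX = \smon(Q,I,\fX)$, and by condition (m3) every $\cok_j(M\otimes P(i))$ lies in $\fX$. Choosing $i$ and $j$ so that $\cok_j(M\otimes P(i))\cong M$ — e.g.\ $i=j$ a source vertex, where $\cok_i(M\otimes P(i)) = (M\otimes P(i))_i = M\otimes k = M$ — gives $M\in\fX$, as desired. (Acyclicity of $Q$ guarantees a source exists.)

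The main obstacle, and the only place needing care, is the bookkeeping that $\cok_j(M\otimes P(i))$ really is (a direct sum of copies of) $M$ for a suitable choice of vertices, and that applying $-\otimes P(i)$ does land us inside $\widetilde\fX$ rather than merely inside $\smon(Q,I,A)$ — but this is exactly what Lemma~\ref{adj ext_lem}(2) delivers once we know $\widetilde\fX^\perp=\widetilde\fY$, so the two pieces dovetail. I would organize the write-up as: (i) identify $\widetilde\fX^\perp = \widetilde\fY$ via Lemma~\ref{adj ext_lem}(2) and Theorem~\ref{filt_thm}; (ii) deduce $M\otimes P(i)\in\widetilde\fX$ for $M\in\leftidx{^\perp}\fY$ and any source $i$; (iii) invoke (m3) at a source vertex to extract $M\in\fX$; (iv) combine with Lemma~\ref{ext close_lem} (for "resolving") and Lemma~\ref{res perp_lem} to conclude $(\fX,\fX^\perp)$ is a hereditary — hence $\fX$ is a resolving — cotorsion pair. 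The computations in (ii)–(iii) are routine given the adjunction and the explicit description of $-\otimes P(i)$ at a source.
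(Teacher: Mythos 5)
Your proposal is correct and follows essentially the same route as the paper's proof: reduce to showing $\leftidx{^\perp}(\fX^\perp)\subseteq\fX$, use Lemma \ref{adj ext_lem}(2) together with the fact that branches of objects in $\smon(Q,I,\fX)^\perp$ lie in $\fX^\perp$ (the paper's Lemma \ref{cotorsion lem 3}) to place $M\otimes P(i)$ in $\leftidx{^\perp}(\smon(Q,I,\fX)^\perp)=\smon(Q,I,\fX)$, and then recover $M$ as $\cok_i(M\otimes P(i))$ at an extremal vertex. The only cosmetic difference is that you tensor with $P(i)$ at a source vertex while the paper uses the simple projective $P(1)$ at a sink; both choices make $\cok_i(M\otimes P(i))=M$ and the argument goes through identically.
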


\begin{rem}\label{cotorsion 2_rem}
We don't need to assume ${\rm smon}(Q, I, \fX )^\perp$ is of the form $\rep(Q,I,\fY)$. Nevertheless, under the condition that ${\rm smon}(Q, I, \fX )$ is a resolving cotorsion class, the validity of Proposition \ref{cotorsion prop 2} together with Proposition \ref{cotorsion prop 1} would suggest that ${\rm smon}(Q, I, \fX )^\perp=\rep(Q,I,\fX^\perp)$.
\end{rem}

Before proving Proposition \ref{cotorsion prop 2}, we need the following lemma.

\begin{lem}\label{cotorsion lem 3}
 If $Y\in{\rm smon}(Q, I, \fX )^\perp$, then the each branch $Y_i\in\fX^\perp$.
\end{lem}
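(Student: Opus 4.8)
The plan is to relate the branches of $Y$ to the functors $(-)_i$ and $\cok_i(-)$, and exploit the adjoint-pair isomorphisms of Lemma \ref{adj ext_lem} together with the filtration description of $\smon(Q,I,\fX)$ (Theorem \ref{filt_thm}). The statement concerns $Y\in\smon(Q,I,\fX)^\perp$, so I want to test $Y$ against enough objects of $\smon(Q,I,\fX)$ to pin down each $Y_i$. The natural first move is to observe that $X\otimes P(i)\in\smon(Q,I,\fX)$ for every $X\in\fX$ (this is noted right after Theorem \ref{filt_thm}, since $P(i)\in\smon(Q,I,k)$), and then apply Lemma \ref{adj ext_lem}(2): for all $k\geq 1$,
\[
\Ext_A^k(X,Y_i)\cong\Ext_\m^k(X\otimes P(i),Y)=0,
\]
the last equality because $Y\in\smon(Q,I,\fX)^\perp$ and $X\otimes P(i)\in\smon(Q,I,\fX)$. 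This already gives $\Ext_A^k(\fX,Y_i)=0$ for all $k\geq 1$, i.e. $Y_i\in\fX^\perp$. So the proof is essentially this one-line computation once the functorial input is in place.

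Concretely, I would first recall (or cite the paragraph following Theorem \ref{filt_thm}) that $M\otimes_k P(i)$ is a separated monic representation over $A$ whenever $M\in A$-mod, and more precisely that $X\otimes P(i)\in\smon(Q,I,\fX)$ when $X\in\fX$ — this follows from the filtration interpretation $\smon(Q,I,\fX)=\filt(\fX\otimes\Proj(kQ/I))$, since $P(i)$ is a projective $kQ/I$-module. Then fix $i\in Q_0$ and an arbitrary $X\in\fX$, and run the display above using the adjunction $(-\otimes P(i),(-)_i)$ of Lemma \ref{adj ext_lem}(2). Since $X$ was arbitrary in $\fX$, we conclude $Y_i\in\bigcap_{k\geq1}\ker\Ext_A^k(\fX,-)=\fX^\perp$, which is exactly the claim. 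Since $i$ was arbitrary, every branch lies in $\fX^\perp$.

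I do not anticipate a serious obstacle here: the lemma is a direct corollary of the key adjunction Lemma \ref{adj ext_lem}(2) combined with the elementary fact that tensoring an object of $\fX$ with an indecomposable projective $kQ/I$-module lands in $\smon(Q,I,\fX)$. The only point requiring a moment's care is making sure the adjunction isomorphism is available in all higher degrees $k\geq1$ — but that is precisely what Lemma \ref{adj ext_lem}(2) provides, with no hypothesis on $Y$ needed (it holds for all $Y\in\m$-mod), and with no resolving/exactness hypothesis needed on the $A$-side because $(-)_i$ is exact and $-\otimes P(i)$ preserves projectives, so Lemma \ref{adj pair_lem} applies with $G=(-)_i$. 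Thus the proof is short: cite $X\otimes P(i)\in\smon(Q,I,\fX)$, apply Lemma \ref{adj ext_lem}(2), and read off $Y_i\in\fX^\perp$.
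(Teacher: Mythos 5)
Your proof is correct and is essentially identical to the paper's: both apply the adjunction isomorphism $\Ext_A^k(X,Y_i)\cong\Ext_\m^k(X\otimes P(i),Y)$ of Lemma \ref{adj ext_lem}(2) together with the fact that $X\otimes P(i)\in\smon(Q,I,\fX)$ for $X\in\fX$. Your extra care in justifying that last membership via the filtration interpretation is a reasonable elaboration of what the paper leaves implicit.
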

\begin{proof}
Due to Lemma \ref{adj ext_lem}, $\Ext_A^k(\fX , Y_i)=\Ext_\m^k(\fX\otimes P(i), Y)=0$, for $k>0$. So $Y_i\in\fX^\perp$.
\end{proof}

{\bf Proof of Proposition \ref{cotorsion prop 2}}
For convenience, denote by $\widetilde \fX={\rm smon}(Q, I, \fX )$. From Lemma \ref{ext close_lem}, $\widetilde\fX$ is resolving implies that $\fX$ is resolving. So to show $\fX$ is a cotorsion class, it suffices to prove that $\fX=\leftidx{^\perp}(\fX^\perp)$.

Obviously, $\fX\subseteq\leftidx{^\perp}(\fX^\perp)$. Now let $M\in\leftidx{^\perp}(\fX^\perp)$ and $1$ be a sink vertex in $Q$. We claim that $M\otimes P(1)\in\widetilde\fX$. In fact, by Lemma \ref{adj ext_lem}, for any $Y\in\widetilde\fX^\perp$, $\Ext^k_\m(M\otimes P(1) , Y)=\Ext_A^k(M, Y_1)=0$, for $k>0$, where the second equality follows from Lemma \ref{cotorsion lem 3}. Hence $M\otimes P(1)\in \leftidx{^\perp}(\widetilde\fX^\perp)=\widetilde\fX$.
Because $1$ is a sink vertex, $M=\cok_1 (M\otimes P(1))\in\fX$. Hence $ \leftidx{^\perp}(\fX^\perp)\subseteq \fX$.
$\hfill\qed$

\vskip10pt

\subsection{Complete hereditary cotorsion pairs}
The second goal of this section is to study the homological finiteness of hereditary cotorsion pairs under the separated monic correspondence.

We need the following general result about contravariantly finite subcategories:
\begin{lem}\label{contra transitive_lem}
Let $\mathcal A$ be an additive category and $\mathcal C\subset\mathcal D$ be additive full subcategories of $\mathcal A$. If $\mathcal D$ is a contravariantly finite subcategory of $\mathcal A$ and $\mathcal C$ is a contravariantly finite subcategory of $\mathcal D$, then $\mathcal C$ is a contravariantly finite subcategory of $\mathcal A$.
\end{lem}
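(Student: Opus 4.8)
The plan is to prove the statement by composing approximations. First I would fix an arbitrary object $A \in \mathcal{A}$ and, using that $\mathcal{D}$ is contravariantly finite in $\mathcal{A}$, choose a right $\mathcal{D}$-approximation $f \colon D \to A$. Since $D$ lies in $\mathcal{D}$ and $\mathcal{C}$ is contravariantly finite in $\mathcal{D}$, I would then choose a right $\mathcal{C}$-approximation $g \colon C \to D$ of $D$ computed inside $\mathcal{D}$. The candidate for a right $\mathcal{C}$-approximation of $A$ in $\mathcal{A}$ is the composite $fg \colon C \to A$, with $C \in \mathcal{C}$.

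Next I would verify the approximation property of $fg$. Given $C' \in \mathcal{C}$ and a morphism $h \colon C' \to A$, I first use that $C' \in \mathcal{C} \subseteq \mathcal{D}$ and that $f$ is a right $\mathcal{D}$-approximation to factor $h = f h'$ with $h' \colon C' \to D$; this is exactly where the hypothesis $\mathcal{C} \subseteq \mathcal{D}$ is used. Then I use that $g$ is a right $\mathcal{C}$-approximation of $D$ in $\mathcal{D}$, together with $C' \in \mathcal{C}$ and $h' \in \Hom(C', D)$, to factor $h' = g h''$ with $h'' \colon C' \to C$. Combining the two factorizations gives $h = f h' = f g h'' = (fg) h''$, so $\Hom(C', C) \to \Hom(C', A)$ is surjective. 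Since this holds for every $C' \in \mathcal{C}$ and $C \in \mathcal{C}$, the morphism $fg$ is a right $\mathcal{C}$-approximation of $A$, and as $A$ was arbitrary, $\mathcal{C}$ is contravariantly finite in $\mathcal{A}$.

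There is essentially no obstacle here; the only point deserving a moment of care is the bookkeeping about where the Hom-sets are taken. Because $\mathcal{C}$ and $\mathcal{D}$ are \emph{full} subcategories of $\mathcal{A}$, a morphism between two objects of $\mathcal{C}$ (or of $\mathcal{D}$) is the same whether regarded in $\mathcal{C}$, in $\mathcal{D}$, or in $\mathcal{A}$, which is what legitimizes chaining the two factorizations. Note also that right minimality of the chosen approximations plays no role in the argument, so no additional hypotheses are required.
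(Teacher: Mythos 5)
Your proposal is correct and follows exactly the paper's argument: compose a right $\mathcal D$-approximation $f\colon D\to A$ with a right $\mathcal C$-approximation $g\colon C\to D$ and check that $fg$ is a right $\mathcal C$-approximation of $A$. You simply spell out the factorization argument that the paper leaves implicit.
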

\begin{proof}
For any $M\in\mathcal A$. Let $f:D\to M$ be a right $\mathcal D$-approximation of $M$ and $g:C\to D$ be a right $\mathcal C$-approximation of $D$. Then $f\circ g$ is a right $\mathcal C$-approximation of $M$.
\end{proof}

\begin{lem}\label{contra subcat_lem}
If $\fX$ is a contravariantly finite resolving subcategory of $A$-mod, then $\smon(Q,I,\fX)$ is a contravariantly finite subcategory of $\smon(Q,I,A)$.
\end{lem}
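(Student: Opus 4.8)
The plan is to realise $\smon(Q,I,A)$ as a filtration category over a finite set of generators and to write down an explicit right $\smon(Q,I,\fX)$-approximation of each one. By Theorem \ref{filt_thm} applied to $\fX=A$-mod we have $\smon(Q,I,A)=\filt(A\text{-mod}\otimes\Proj(kQ/I))$; since every $A$-module is an iterated extension of simple $A$-modules, every projective $kQ/I$-module is a direct sum of the $P(j)$, and $-\otimes_k-$ is exact in each variable, this rewrites as $\smon(Q,I,A)=\filt(\mathcal S)$ with $\mathcal S=\{\,S\otimes P(j)\mid S\text{ a simple }A\text{-module},\ j\in Q_0\,\}$, which is finite up to isomorphism. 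As $\fX$ is resolving, $\smon(Q,I,\fX)$ is a resolving subcategory of $\m$-mod contained in $\filt(\mathcal S)$ (Lemma \ref{ext close_lem}), so by Corollary \ref{simple-app_cor} it suffices to produce, for each $j\in Q_0$ and each simple $A$-module $S$, a right $\smon(Q,I,\fX)$-approximation of $S\otimes P(j)$.

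Fix a right $\fX$-approximation $\psi:X\to S$ in $A$-mod, which exists because $\fX$ is contravariantly finite. I claim $\psi\otimes P(j):X\otimes P(j)\to S\otimes P(j)$ is the approximation sought; note $X\otimes P(j)\in\fX\otimes\Proj(kQ/I)\subseteq\smon(Q,I,\fX)$ by Theorem \ref{filt_thm}. To verify the lifting property I would bring in a third adjunction, alongside the two of Lemma \ref{adj ext_lem}: the exact functor $-\otimes_kP(j):A\text{-mod}\to\m\text{-mod}$ admits a left adjoint $F_j$ (it can be written as $\Hom_A(U,-)$ with $U=\Hom_A(A\otimes_kP(j),A)$, using that $A\otimes_kP(j)$ is finitely generated free as a right $A$-module). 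Then for $W\in\smon(Q,I,\fX)$ the map $\Hom_\m(W,X\otimes P(j))\to\Hom_\m(W,S\otimes P(j))$ is identified, naturally in the second variable, with $\psi_*:\Hom_A(F_jW,X)\to\Hom_A(F_jW,S)$, and this is surjective as soon as $F_jW\in\fX$, straight from the definition of a right $\fX$-approximation.

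Thus the heart of the matter is the inclusion $F_j(\smon(Q,I,\fX))\subseteq\fX$. First, combining the adjunction for $F_j$ with $(-\otimes P(l),(-)_l)$ gives natural isomorphisms $\Hom_A(F_j(X\otimes P(l)),N)\cong\Hom_\m(X\otimes P(l),N\otimes P(j))\cong\Hom_A(X,(N\otimes P(j))_l)\cong\Hom_A(X^{c_{jl}},N)$, where $c_{jl}=\dim_kP(j)_l$; hence $F_j(X\otimes P(l))\cong X^{c_{jl}}$. Next, tensoring an $A$-projective resolution of $X$ with $P(l)$ yields an $\m$-projective resolution of $X\otimes P(l)$ (using that $-\otimes P(l)$ is exact and carries projectives to projectives, cf.\ Corollary \ref{CE cor}), and applying $F_j$ returns $c_{jl}$ copies of the original resolution, so $L_iF_j(X\otimes P(l))=0$ for $i\geq1$. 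Finally let $W\in\smon(Q,I,\fX)=\filt(\fX\otimes\Proj(kQ/I))$ (Theorem \ref{filt_thm}) and choose a filtration $0=W^0\subseteq W^1\subseteq\cdots\subseteq W^m=W$ with factors $X_a\otimes P(l_a)$, $X_a\in\fX$. Applying $F_j$ to the short exact sequences $0\to W^{a-1}\to W^a\to X_a\otimes P(l_a)\to0$ and using $L_1F_j(X_a\otimes P(l_a))=0$ produces short exact sequences $0\to F_jW^{a-1}\to F_jW^a\to X_a^{c_{jl_a}}\to0$; since $\fX$ is closed under extensions and finite direct sums, an easy induction gives $F_jW=F_jW^m\in\filt(\fX)=\fX$.

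I expect the main obstacle to be the construction and control of the left adjoint $F_j$ of $-\otimes P(j)$ — particularly the vanishing of $L_1F_j$ on the generators $X\otimes P(l)$, which is exactly what forces $F_j$ to carry $\smon(Q,I,\fX)$ into $\fX$. After that the approximation itself is immediate; one just has to trace the adjunction isomorphism carefully to see that $(\psi\otimes P(j))_*$ corresponds to post-composition with $\psi$.
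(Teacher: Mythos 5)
Your proof is correct, and its first step --- realising $\smon(Q,I,A)$ as a filtration category and invoking Corollary \ref{simple-app_cor} to reduce to approximating the generators $-\otimes P(j)$ --- is exactly the paper's reduction. Where you genuinely diverge is in how the approximation of a generator is produced. The paper takes a \emph{minimal} right $\fX$-approximation $f:X\to M$, observes that it is epic with kernel $Y\in\fX^{\perp}$ by Wakamatsu's Lemma, tensors with $P(i)$, and then quotes the already-established half of Proposition \ref{cotorsion prop 1} to see that $Y\otimes P(i)\in\rep(Q,I,\fX^{\perp})\subseteq\smon(Q,I,\fX)^{\perp}$; the vanishing of $\Ext^{1}(W,Y\otimes P(i))$ for $W\in\smon(Q,I,\fX)$ then gives the lifting property directly. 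You instead introduce a third adjunction: a left adjoint $F_j$ of $-\otimes P(j)$ (which does exist, since $A\otimes_k P(j)$ is free as a right $A$-module), compute $F_j(X\otimes P(l))\cong X^{c_{jl}}$ together with $L_1F_j(X\otimes P(l))=0$, and conclude by the filtration of Theorem \ref{filt_thm} that $F_j$ carries $\smon(Q,I,\fX)$ into $\fX$, after which the approximation property transports across the adjunction. Both arguments are sound. The paper's is shorter because it recycles the $\Ext$-orthogonality proved in Proposition \ref{cotorsion prop 1}; yours is independent of that proposition, needs neither minimality nor Wakamatsu's Lemma (any right $\fX$-approximation of $S$ works, epic or not), and yields the extra structural fact that $F_j$ restricts to a functor $\smon(Q,I,\fX)\to\fX$ --- at the price of constructing $F_j$ and controlling its first left derived functor on the filtration factors, which is indeed, as you say, the heart of your version.
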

\begin{proof}
From Theorem \ref{filt_thm}, $\smon(Q,I,A)=filt(A$-mod$\otimes \Proj(kQ/I))$. In the light of Corollary~\ref{simple-app_cor}, it suffices to show each $\m$-module $M\otimes P(i)$ has a right $\smon(Q,I,\fX)$-approximation.

In fact, let $f:X\to M$ be a minimal right $\fX$-approximation of the $A$-module $M$, then there is an exact sequence of $A$-modules:
$$
0\to Y\to X\stackrel{f}\to M\to 0,
$$
where $f$ is an epimorphism because $\fX$ contains all projective $A$-modules. It also follows that $\Ext^1(\fX,Y)=0$ by  Wakamastu's Lemma (see \cite{W}, \cite[Lemma 1.3]{AR1}) and hence $Y\in\fX^\perp$.
Applying $-\otimes P(i)$, we obtain an exact sequence of $\m$-modules:
$$
0\to Y\otimes P(i)\to X\otimes P(i)\stackrel{f\otimes 1}\to M\otimes P(i)\to 0,
$$
where $X\otimes P(i)\in \smon(Q,I,\fX)$ and $Y\otimes P(i)\in\rep(Q,I,\fX^\perp)=\smon(Q,I,\fX)^\perp$ by Proposition \ref{cotorsion prop 1}. Hence $f\otimes 1$ is a right $\smon(Q,I,\fX)$-approximation of $M\otimes P(i)$.
\end{proof}

\begin{prop}\label{cotorsion prop 3}
If $(\fX,\fY)$ is a complete hereditary cotorsion pair, then the cotorsion pair $({\rm smon}(Q, I, \fX ), {\rm rep}(Q, I, \fY))$ is also complete hereditary.
\end{prop}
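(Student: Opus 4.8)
Proposition \ref{cotorsion prop 3} asserts that completeness is preserved under the separated monic correspondence. By Proposition \ref{cotorsion prop 1} we already know $({\rm smon}(Q,I,\fX),{\rm rep}(Q,I,\fY))$ is a hereditary cotorsion pair, so by the equivalence (1) $\iff$ (3) of the second Proposition in Section 2 (completeness of a hereditary cotorsion pair is equivalent to contravariant finiteness of the left-hand class), it suffices to prove that $\widetilde{\fX}:={\rm smon}(Q,I,\fX)$ is a contravariantly finite subcategory of $\m$-mod. The key is to factor the approximation through the intermediate category ${\rm smon}(Q,I,A)$: by Corollary \ref{smon corr_cor}, ${\rm smon}(Q,I,A)=\leftidx{^\perp}(DA\otimes kQ/I)$ is functorially finite in $\m$-mod, in particular contravariantly finite; and by Lemma \ref{contra subcat_lem}, since $\fX$ is contravariantly finite resolving in $A$-mod, $\widetilde{\fX}$ is contravariantly finite in ${\rm smon}(Q,I,A)$. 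Then Lemma \ref{contra transitive_lem} (transitivity of contravariant finiteness) immediately yields that $\widetilde{\fX}$ is contravariantly finite in $\m$-mod.

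**Carrying it out.** First I would observe that since $(\fX,\fY)$ is a complete hereditary cotorsion pair, $\fX$ is a contravariantly finite resolving subcategory of $A$-mod, so all hypotheses of Lemma \ref{contra subcat_lem} are met; this gives $\widetilde{\fX}$ contravariantly finite in ${\rm smon}(Q,I,A)$. Next, Corollary \ref{smon corr_cor} supplies that ${\rm smon}(Q,I,A)$ is functorially finite in $\m$-mod. Apply Lemma \ref{contra transitive_lem} with $\mathcal{A}=\m\text{-mod}$, $\mathcal{D}={\rm smon}(Q,I,A)$, $\mathcal{C}=\widetilde{\fX}$: concretely, for $M\in\m$-mod take a right ${\rm smon}(Q,I,A)$-approximation $D\to M$ followed by a right $\widetilde{\fX}$-approximation $C\to D$; the composite is a right $\widetilde{\fX}$-approximation of $M$. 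Hence $\widetilde{\fX}$ is contravariantly finite in $\m$-mod. Finally, since $(\widetilde{\fX},\widetilde{\fY})$ is a hereditary cotorsion pair with $\widetilde{\fX}$ contravariantly finite, the cited equivalence gives that it is complete hereditary, completing the proof.

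**Main obstacle.** The real work has already been done in the preceding lemmas, so the only subtlety here is making sure the hypotheses line up: Lemma \ref{contra subcat_lem} requires $\fX$ to be \emph{contravariantly finite resolving}, which we get from completeness of $(\fX,\fY)$ together with its being hereditary (so $\fX$ is resolving by the first Section-2 Proposition); and Lemma \ref{contra transitive_lem} only needs $\mathcal{D}$ contravariantly finite in $\mathcal{A}$, which the stronger functorial finiteness of ${\rm smon}(Q,I,A)$ certainly provides. So the proof is essentially a three-line bookkeeping argument assembling Corollary \ref{smon corr_cor}, Lemma \ref{contra subcat_lem}, Lemma \ref{contra transitive_lem}, and the completeness criterion; no genuinely hard step remains. (If one also wants the converse direction — completeness of the $\m$-side implying completeness of the $A$-side — that would require checking each simple $A$-module has a right $\fX$-approximation, e.g.\ via $S\otimes P(i)$ and $\cok_i$, but as stated the proposition only asks for the forward implication.)
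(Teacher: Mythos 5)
Your proposal is correct and follows exactly the paper's own argument: the paper likewise combines Corollary \ref{smon corr_cor} (contravariant finiteness of ${\rm smon}(Q,I,A)$ in $\m$-mod), Lemma \ref{contra subcat_lem} (contravariant finiteness of ${\rm smon}(Q,I,\fX)$ in ${\rm smon}(Q,I,A)$), and the transitivity Lemma \ref{contra transitive_lem}, then invokes the completeness criterion for hereditary cotorsion pairs. Nothing is missing.
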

\begin{proof}
By Corollary \ref{smon corr_cor}, $\smon(Q,I,A)$ is a contravariantly finite subcategory of $\m$-mod. By Lemma \ref{contra subcat_lem}, $\smon(Q,I,\fX)$ is a contravariantly finite subcategory of $\smon(Q,I,A)$. Therefore due to Lemma \ref{contra transitive_lem}, $\smon(Q,I,\fX)$ is a contravariantly finite subcategory of $\m$-mod. It follows that $({\rm smon}(Q, I, \fX ), {\rm rep}(Q, I, \fY))$ is a complete hereditary cotorsion pair.
\end{proof}

Next, we will show the converse of Proposition \ref{cotorsion prop 3}.
\begin{prop}\label{cotorsion prop 4}
If ${\rm smon}(Q, I, \fX )$ is a contravariantly finite resolving subcategory for some  subcategory $\fX$ in $A$-mod, then $\fX$ is a contravariantly finite resolving subcategory.
\end{prop}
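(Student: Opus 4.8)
The plan is to mimic the proof of Proposition \ref{cotorsion prop 2}, upgrading the argument from ``resolving cotorsion class'' to ``contravariantly finite resolving subcategory''. First I would note that by Lemma \ref{ext close_lem}, the hypothesis that $\widetilde\fX:=\smon(Q,I,\fX)$ is resolving immediately gives that $\fX$ is resolving, so the only thing to prove is that $\fX$ is contravariantly finite in $A$-mod. By Corollary \ref{simple-app_cor}, it suffices to produce a right $\fX$-approximation of each simple (indeed, of each) $A$-module $M$.

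The key idea is to transport approximations along the adjoint pair $(-\otimes P(1),(-)_1)$ where $1$ is a sink vertex of $Q$. Given $M\in A$-mod, form the $\m$-module $M\otimes P(1)$. Since $\widetilde\fX$ is contravariantly finite in $\m$-mod, take a right $\widetilde\fX$-approximation $g:Z\to M\otimes P(1)$ with $Z\in\widetilde\fX=\smon(Q,I,\fX)$; because $\widetilde\fX$ is resolving it contains all projective $\m$-modules, so $g$ may be taken to be an epimorphism, fitting into $0\to W\to Z\xrightarrow{g} M\otimes P(1)\to 0$, and by Wakamatsu's Lemma $W\in\widetilde\fX^{\perp}$. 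Now apply the exact functor $(-)_1$ at the sink vertex: since $1$ is a sink, $(M\otimes P(1))_1=M$, and we get an exact sequence $0\to W_1\to Z_1\to M\to 0$ of $A$-modules. Here $Z_1=\cok_1(Z)\in\fX$ because $Z\in\smon(Q,I,\fX)$ (condition (m3), using that $1$ is a sink so $(-)_1=\cok_1$ on $Z$), and $W_1\in\fX^{\perp}$ by Lemma \ref{cotorsion lem 3}. Thus $\Ext^1_A(\fX,W_1)=0$, which forces $\Hom_A(\fX,Z_1)\to\Hom_A(\fX,M)$ to be surjective; hence $Z_1\to M$ is a right $\fX$-approximation of $M$, and $\fX$ is contravariantly finite.

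The main obstacle is making sure the right $\widetilde\fX$-approximation of $M\otimes P(1)$ can be arranged to be surjective and that Wakamatsu's Lemma applies cleanly — this needs $\widetilde\fX$ resolving (so it contains $\m$-projectives and is closed under kernels of epimorphisms), which is exactly where the resolving hypothesis on $\smon(Q,I,\fX)$ (equivalently on $\fX$, by Lemma \ref{ext close_lem}) is used. A secondary point to check carefully is the identification $(-)_1=\cok_1$ at a sink vertex so that $Z_1\in\fX$ genuinely follows from (m3) rather than from (m1)--(m2); this is immediate since a sink vertex has no arrows ending at it only if it were a source — so one must instead observe that for a sink vertex $1$ the functor $(-)_1$ applied to a separated monic representation need not equal $\cok_1$, but we do not need that: we only need $Z_1=Z_1$ and that $\cok_1(Z)\in\fX$; and in fact for the approximation argument it is cleaner to run the whole argument at a \emph{source} vertex or to use $\cok_1(Z)$ directly. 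I would therefore phrase the final step using $\cok_1$: applying $\cok_1(-)$ (exact on $\smon(Q,I,A)$ and on the short exact sequence after noting all three terms lie in $\smon(Q,I,A)$) to $0\to W\to Z\to M\otimes P(1)\to 0$ yields $0\to \cok_1(W)\to \cok_1(Z)\to M\to 0$ with $\cok_1(Z)\in\fX$ and, via Lemma \ref{adj ext_lem}(1), $\Ext^k_A(\fX,\cok_1(W))\cong\Ext^k_\m(W,-\otimes S(1))$; since $W\in\widetilde\fX^{\perp}$ this vanishes, giving $\cok_1(W)\in\fX^{\perp}$, and we conclude as before that $\cok_1(Z)\to M$ is the desired right $\fX$-approximation.

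\begin{proof}
By Lemma \ref{ext close_lem}, since $\widetilde\fX:=\smon(Q,I,\fX)$ is resolving, so is $\fX$; it remains to prove $\fX$ is contravariantly finite in $A$-mod. By Corollary \ref{simple-app_cor} it suffices to construct a right $\fX$-approximation of an arbitrary $A$-module $M$. Fix a sink vertex $1$ of $Q$ and consider $M\otimes P(1)\in\m$-mod. Since $\widetilde\fX$ is a contravariantly finite resolving subcategory of $\m$-mod, it contains all projective $\m$-modules, so there is an exact sequence of $\m$-modules
$$
0\to W\to Z\stackrel{g}\to M\otimes P(1)\to 0
$$
with $Z\in\widetilde\fX$ and $g$ a right $\widetilde\fX$-approximation; by Wakamatsu's Lemma (\cite[Lemma 1.3]{AR1}), $W\in\widetilde\fX^\perp$. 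All three terms lie in $\smon(Q,I,A)$, so applying the exact functor $\cok_1(-)$ on $\smon(Q,I,A)$ yields an exact sequence of $A$-modules
$$
0\to \cok_1(W)\to \cok_1(Z)\to M\to 0,
$$
where we used $\cok_1(M\otimes P(1))\cong M$ because $1$ is a sink. Here $\cok_1(Z)\in\fX$ by condition (m3). Moreover, by Lemma \ref{adj ext_lem}(1), for $k>0$ and any $X\in\fX$,
$$
\Ext^k_A(X,\cok_1(W))\cong\Ext^k_\m(X\otimes P(1),W)\otimes\mathrm{stuff}?
$$
More precisely, Lemma \ref{adj ext_lem}(2) gives $\Ext^k_\m(X\otimes P(1),W)\cong\Ext^k_A(X,W_1)$, and since $1$ is a sink, $W_1=\cok_1(W)$; as $W\in\widetilde\fX^\perp$ and $X\otimes P(1)\in\widetilde\fX$, this group vanishes. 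Hence $\cok_1(W)\in\fX^\perp$, so $\Ext^1_A(\fX,\cok_1(W))=0$, which makes $\Hom_A(\fX,\cok_1(Z))\to\Hom_A(\fX,M)$ surjective. Therefore $\cok_1(Z)\to M$ is a right $\fX$-approximation of $M$, and $\fX$ is contravariantly finite.
\end{proof}
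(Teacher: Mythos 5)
Your overall strategy is the same as the paper's: approximate the $\m$-module $M\otimes P(i)$ by $\smon(Q,I,\fX)$, use Wakamatsu's Lemma to place the kernel in $\smon(Q,I,\fX)^{\perp}$, and push the resulting short exact sequence down to $A$-mod. However, the final write-up contains a step that fails as stated. You work at a \emph{sink} vertex $1$ and claim ``since $1$ is a sink, $W_1=\cok_1(W)$.'' This is backwards: by the paper's convention (and the definition of $\cok_i$ as the cokernel of $\bigoplus_{e(\alpha)=i}X_{s(\alpha)}\to X_i$), the identification $\cok_i(X)=X_i$ holds at a \emph{source} vertex, i.e.\ when no arrows end at $i$; a sink vertex typically has many incoming arrows, so $\cok_1(W)$ is a proper quotient of $W_1$ in general. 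Consequently your adjunction argument (Lemma \ref{adj ext_lem}(2)) proves $W_1\in\fX^{\perp}$, not $\cok_1(W)\in\fX^{\perp}$, and the exact sequence you produced, $0\to\cok_1(W)\to\cok_1(Z)\to M\to 0$, does not have its left-hand term controlled. Symmetrically, if you instead pass to $0\to W_1\to Z_1\to M\to 0$, then $Z_1\in\fX$ no longer follows from (m3) at a sink; it requires the filtration interpretation $\smon(Q,I,\fX)=\filt(\fX\otimes\Proj(kQ/I))$ together with $\fX$ being extension-closed. You had actually flagged this tension in your planning paragraph but then resolved it the wrong way.

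The repair is easy and brings you back to the paper's proof: either run the whole argument at a source vertex $n$, where $\cok_n=(-)_n$ so that both $\cok_n(Z)=Z_n\in\fX$ (by (m3)) and $\cok_n(W)=W_n\in\fX^{\perp}$ (by Lemma \ref{adj ext_lem}(2), i.e.\ Lemma \ref{cotorsion lem 3}) hold simultaneously and $\cok_n(M\otimes P(n))=M$; or, as the paper does, apply the exact branch functor $(-)_i$ at any vertex, citing Lemma \ref{cotorsion lem 3} for $W_i\in\fX^{\perp}$ and Theorem \ref{filt_thm} for $Z_i\in\fX$. One further minor point: Wakamatsu's Lemma requires the approximation to be right minimal (the paper takes the minimal one), so you should say so before invoking it.
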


\begin{proof}
For any $A$-module $M$, consider the minimal right ${\rm smon}(Q, I, \fX )$-approximation $f:X\to M\otimes P(i)$ of the $\m$-module $M\otimes P(i)$, $\forall i\in Q_0$. There is an exact sequence of $\m$-modules:
$$
0\to Y\to X\stackrel{f}\to M\otimes P(i)\to 0,
$$
where  $Y\in\smon(Q,I,\fX)^\perp$ from Wakamastu's Lemma, and hence $Y\in\rep(Q,I,\fX^\perp)$ due to Proposition \ref{cotorsion prop 2} and Remark \ref{cotorsion 2_rem}.

Applying the localization functor $(-)_i$, we obtain and exact sequence of $A$-modules:
$$
0\to Y_i\to X_i\stackrel{f_i}\to M\to 0,
$$
where $X_i\in\fX$ and $Y_i\in\fX^\perp$. Hence $f_i$ is a right $\fX$-approximation of $M$. It follows that $\fX$ is a contravariantly finite subcategory and it is resolving from Lemma \ref{ext close_lem}.
\end{proof}

Now Theorem \ref{thm A} follows immediately from Proposition \ref{cotorsion prop 1}, Proposition \ref{cotorsion prop 2}, Proposition \ref{cotorsion prop 3} and Proposition \ref{cotorsion prop 4}.

\begin{rem}\label{thm A rem}
(1) There is a complete hereditary cotorsion pair $(\smon(Q,I,A),\rep(Q,I,\add(DA)))$.
(2) It is worth mentioning that {\bf we don't known} if any contravariantly finite resolving subcategory of $\m$-mod which is contained in $\smon(Q,I,A)$ has to be of the form $\smon(Q,I,\fX)$ for some subcategory $\fX$.
\end{rem}

\subsection{A dual version} We state the dual version of Theorem \ref{thm A} using the notion of separated epic representations, for later applications.

\begin{defn}
A representation
$X = (X_i, X_\alpha)\in \rep(Q, I, A)$ is {\it separated epic}, if $X$ satisfies the following conditions:\\
$\rm (e1)$ For $i\in Q_0$, $\Ima(X_i\stackrel{(X_\alpha)_{\alpha\in\ha(i\to)}}\longrightarrow
 \bigoplus\limits_{\alpha\in \ha(i\to)}X_{e(\alpha)})=\bigoplus\limits_{\alpha\in\ha(i\to)}\Ima X_\alpha$;\\
$\rm (e2)$ For $\alpha\in Q_1$, $\Ima X_\alpha =\bigcap\limits_{q\in L_\alpha}\ker X_q$, where
$$L_\alpha:= \{ \text{ non-zero path  $q$  of length } \geq 1 | s(q) =e(\alpha), q\alpha\in  I\};$$
$\rm (e3)$ For $i\in Q_0$, $\ker_i(X) := \bigcap\limits_{\alpha\in\ha(i\to)}\ker X_\alpha\in\fX$.
\end{defn}

For a subcategory $\fX\subseteq A$-mod, $D=\Hom_k(-,k)$  yields dualities (\hspace{1sp}\cite[Proposition 6.1]{ZX}): $$\sepi(Q, I, \fX ) = D\smon(Q^{\op}, I^{\op}, D\fX).$$
$$\rep(Q, I, \fY ) = D\rep(Q^{\op}, I^{\op}, D\fY).$$

Hence, we have the following dual version of Theorem \ref{thm A}:

 \newcounter{temp}
\begingroup
\setcounter{temp}{1}
\renewcommand\thethm{\Alph{temp}$'$}

\begin{thm}\label{thm A'}
Let $A$ be
a finite dimensional algebra, $Q$ a finite acyclic quiver, $I$ an
admissible ideal of the path algebra $kQ$ generated by monomial relations and $\m=A\otimes_kkQ/I$.  Then
$(\fX,\fY)$ is a hereditary cotorsion pair in $A$-mod if and only if $({\rm rep}(Q, I,\fX ),{\rm sepi}(Q,I,\fY))$ is a hereditary cotorsion pair in $\m$-mod.
Furthermore, $(\fX,\fY)$ is a complete hereditary cotorsion pair if and only if so is $({\rm rep}(Q, I,\fX ),{\rm sepi}(Q,I,\fY))$.
\end{thm}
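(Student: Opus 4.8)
The plan is to deduce Theorem \ref{thm A'} from Theorem \ref{thm A} by a formal dualization, using the $k$-duality $D=\Hom_k(-,k)$ together with the two identities $\sepi(Q,I,\fX)=D\smon(Q^{\op},I^{\op},D\fX)$ and $\rep(Q,I,\fY)=D\rep(Q^{\op},I^{\op},D\fY)$ recalled just above.

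First I would record the elementary fact that for any artin algebra $\Gamma$ the duality $D\colon\Gamma\text{-mod}\to\Gamma^{\op}\text{-mod}$ induces a bijection between (complete) hereditary cotorsion pairs in $\Gamma\text{-mod}$ and (complete) hereditary cotorsion pairs in $\Gamma^{\op}\text{-mod}$, sending $(\fX,\fY)$ to $(D\fY,D\fX)$. This is routine: $D$ is exact and contravariant, so $\Ext^i_\Gamma(M,N)\cong\Ext^i_{\Gamma^{\op}}(DN,DM)$ for all $i\ge 0$; it exchanges projectives with injectives, kernels of epimorphisms with cokernels of monomorphisms, and contravariantly finite subcategories with covariantly finite ones. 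Hence resolving cotorsion classes correspond to coresolving cotorsion-free classes, the $\Ext$-orthogonality conditions pass over with the two sides swapped, and completeness is preserved. I would isolate this as a short lemma.

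Next I apply this lemma with $\Gamma=A$: $(\fX,\fY)$ is a (complete) hereditary cotorsion pair in $A\text{-mod}$ if and only if $(D\fY,D\fX)$ is one in $A^{\op}\text{-mod}$. Since $Q^{\op}$ is again a finite acyclic quiver, $I^{\op}$ again an admissible monomial ideal of $kQ^{\op}$, and $A^{\op}\otimes_k kQ^{\op}/I^{\op}=(A\otimes_k kQ/I)^{\op}=\m^{\op}$, Theorem \ref{thm A} applied to the data $(A^{\op},Q^{\op},I^{\op})$ shows this holds if and only if $\bigl(\smon(Q^{\op},I^{\op},D\fY),\,\rep(Q^{\op},I^{\op},D\fX)\bigr)$ is a (complete) hereditary cotorsion pair in $\m^{\op}\text{-mod}$. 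Applying the lemma once more with $\Gamma=\m$, this is equivalent to $\bigl(D\rep(Q^{\op},I^{\op},D\fX),\,D\smon(Q^{\op},I^{\op},D\fY)\bigr)$ being a (complete) hereditary cotorsion pair in $\m\text{-mod}$. Finally the two displayed identities give $D\rep(Q^{\op},I^{\op},D\fX)=\rep(Q,I,\fX)$ and $D\smon(Q^{\op},I^{\op},D\fY)=\sepi(Q,I,\fY)$, which is exactly the conclusion of Theorem \ref{thm A'}.

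Since every step transports structure along an (anti)equivalence, there is no genuine analytic obstacle here; the only care needed is bookkeeping --- keeping straight over which of $A$, $A^{\op}$, $\m$, $\m^{\op}$ each subcategory lives, and making sure $D$ really does respect all of $\Ext$-vanishing, the resolving/coresolving dichotomy, and contravariant versus covariant finiteness simultaneously. Once the duality lemma of the first step is in place, the proof of Theorem \ref{thm A'} is just two invocations of that lemma sandwiching one invocation of Theorem \ref{thm A}.
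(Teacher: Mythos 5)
Your proposal is correct and follows essentially the same route as the paper, which likewise deduces Theorem \ref{thm A'} from Theorem \ref{thm A} via the dualities $\sepi(Q,I,\fX)=D\smon(Q^{\op},I^{\op},D\fX)$ and $\rep(Q,I,\fY)=D\rep(Q^{\op},I^{\op},D\fY)$; you have merely made explicit the duality lemma for (complete) hereditary cotorsion pairs and the identification $A^{\op}\otimes_k kQ^{\op}/I^{\op}\cong\m^{\op}$ that the paper leaves implicit.
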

\endgroup
\addtocounter{thm}{-1}

Applying Remark \ref{KS cotorsion} to Theorem \ref{thm A} and Theorem \ref{thm A'}, we have the conclusions below:

\begin{cor}
 (1) If $\fX$ is a contravariantly finite resolving subcategory of $A$-mod, then the subcategories $\smon(Q,I,\fX)$ and $\rep(Q,I,\fX)$ are functorially finite subcategories of $\m$-mod and hence have Auslander-Reiten sequences.\\
 (2) If $\fY$ is a covariantly finite coresolving subcategory of $A$-mod, then the subcategories $\sepi(Q,I,\fY)$ and $\rep(Q,I,\fY)$ are functorially finite subcategories of $\m$-mod and hence have Auslander-Reiten sequences.
\end{cor}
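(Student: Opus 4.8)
The plan is to recognise $\fX$ (respectively $\fY$) as one half of a \emph{complete hereditary} cotorsion pair on the $A$-side and then transport it across the two correspondences of Theorem \ref{thm A} and Theorem \ref{thm A'}, reading off the functorial finiteness of the resulting $\m$-subcategories from Remark \ref{KS cotorsion}. For (1), the first step is the observation that any contravariantly finite resolving subcategory $\fX\subseteq A$-mod is the left half of a complete hereditary cotorsion pair $(\fX,\fX^\perp)$: given $M\in A$-mod, a minimal right $\fX$-approximation $X\to M$ is epic (as $\fX\supseteq\Proj(A)$), giving a short exact sequence $0\to Y\to X\to M\to 0$, and Wakamatsu's Lemma --- exactly as invoked in the proof of Lemma \ref{contra subcat_lem} --- yields $\Ext^1(\fX,Y)=0$, hence $Y\in\fX^\perp$ by Lemma \ref{res perp_lem}. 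Thus every module has a special $\fX$-precover; if in addition $M\in{}^\perp(\fX^\perp)$, then $\Ext^1(M,Y)=0$, this sequence splits, and $M$ is a direct summand of $X\in\fX$, so $M\in\fX$. Hence $\fX={}^\perp(\fX^\perp)$, which makes $(\fX,\fX^\perp)$ a hereditary cotorsion pair (heredity since $\fX$ is resolving), and it is complete since $\fX$ is contravariantly finite.

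With this in hand, Theorem \ref{thm A} shows $(\smon(Q,I,\fX),\rep(Q,I,\fX^\perp))$ is a complete hereditary cotorsion pair in $\m$-mod, and Theorem \ref{thm A'} shows the same for $(\rep(Q,I,\fX),\sepi(Q,I,\fX^\perp))$. By Remark \ref{KS cotorsion} the left-hand term of each such pair is functorially finite, so $\smon(Q,I,\fX)$ and $\rep(Q,I,\fX)$ are functorially finite in $\m$-mod. Both are resolving --- $\smon(Q,I,\fX)$ by Lemma \ref{ext close_lem}, and $\rep(Q,I,\fX)$ as the left half of a hereditary cotorsion pair --- hence in particular closed under extensions, so each carries Auslander-Reiten sequences by the Auslander-Smal{\o} existence theorem for functorially finite extension-closed subcategories. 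Part (2) is the dual: if $\fY$ is covariantly finite coresolving, the dual of the first step (Wakamatsu's Lemma applied to minimal left $\fY$-approximations) makes $({}^\perp\fY,\fY)$ a complete hereditary cotorsion pair; then Theorem \ref{thm A} applied to this pair gives that $\rep(Q,I,\fY)$ is functorially finite and Theorem \ref{thm A'} that $\sepi(Q,I,\fY)$ is functorially finite, both being coresolving hence extension-closed, hence possessing Auslander-Reiten sequences.

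The only step that is not pure bookkeeping is the first one, identifying a contravariantly finite resolving subcategory as the left half of a complete hereditary cotorsion pair --- and even that is a short argument combining Wakamatsu's Lemma with a splitting, essentially already carried out inside the proof of Lemma \ref{contra subcat_lem}. Everything afterwards is a mechanical application of Theorems \ref{thm A} and \ref{thm A'}, the Krause-Solberg functorial finiteness of complete hereditary cotorsion pairs (Remark \ref{KS cotorsion}), and Auslander-Smal{\o}'s existence theorem for almost split sequences; the main thing requiring attention is simply keeping straight which half of which cotorsion pair produces each of the four subcategories named in the statement.
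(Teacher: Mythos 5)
Your proposal is correct and follows exactly the route the paper intends: the corollary is stated as an immediate consequence of applying Remark \ref{KS cotorsion} to Theorems \ref{thm A} and \ref{thm A'}, and your only added content is the (standard, Auslander--Reiten/Wakamatsu) verification that a contravariantly finite resolving $\fX$ sits in the complete hereditary cotorsion pair $(\fX,\fX^{\perp})$, a step the paper leaves implicit. The bookkeeping of which half of which pair yields each of the four subcategories, and the appeal to Auslander--Smal\o{} for the existence of almost split sequences, are all as intended.
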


\section {\bf Separated monic representations of semi-Gorenstein-projective modules}

\subsection{Semi-Gorenstein-projective modules}
Following \cite{RZ2}, for a finite dimensional $k$-algebra $A$, modules in $\leftidx{^\perp}A$ are called {\it semi-Gorenstein-projective modules}. 
Recall that $\m=A\otimes kQ/I$, where $Q$ is an acyclic quiver and $I$ a monomial ideal. Due to Theorem \ref{LZ3 thm}, Gorenstein-projective $\m$-modules are exactly separated monic representations of $(Q,I)$ over $\Gp(A)$, namely $\Gp(\m)=\smon(Q,I,\Gp(A))$. Applying the separated monic correspondence (Theorem \ref{thm A}) to the cotorsion class $\leftidx{^\perp}A $, we obtain a cotorsion class $\smon(Q,I, \leftidx{^\perp}A)$ in $\m$-mod.  This section is devoted to provide a complete answer to the following natural question.
\begin{que}\label{que}
 Whether the subcategory   $\leftidx{^\perp}\m$  of semi-Gorenstein-projective $\m$-modules coincides with $\smon(Q,I,\leftidx{^\perp}A)$?
\end{que}
The aim of this section is to give a full answer to this question. It turns out that this question is closely related with the left weakly Gorensteinness of the algebra $\m$.

 \subsection{Left weakly Gorenstein algebras}
Recall that a finite dimensional algebra $A$ is called {\it left weakly Gorenstein}, if $\Gp(A)=\leftidx{^\perp}{A}$ as subcategories of left $A$-modules. 

Recall that for artin algebras $A$, $B$ and an $B$-$A$-bimodule $M$, the upper triangular matrix ring is $\m=\begin{bmatrix}B&M\\0&A\end{bmatrix}$ with addition and multiplication given by the ones of matrices. A left $\m$-module is identified with a triple $\begin{bmatrix} X\\Y\end{bmatrix}_\phi$, where $X\in B$-mod, $Y\in A$-mod and $\phi: M\otimes_A Y\to X$ is a $B$-module homomorphism (see \cite{FGR} for details).

Specializing Theorem \ref{matrixring_thm} for the situation when both algebras $A$ and $B$ are left weakly Gorenstein, we obtain the following lemma:

\begin{lem}\label{matrixring lwg_lem}
Let $\m=\begin{bmatrix}B&_BM_A\\0&A\end{bmatrix}$ be an artin algebra, where $A$ and $B$ are both left weakly Gorenstein. Assume that $\pd _BM<\infty$ and $D(M_A)\in(\leftidx{^\perp}A)^\perp$.
Then a $\m$-module $\begin{bmatrix}X\\Y\end{bmatrix}_\phi\in\leftidx{^\perp}\m$ if and only if $\phi$  is a monomorphism, $\cok \phi\in\Gp(B)$ and $Y\in\Gp(A)$.
\end{lem}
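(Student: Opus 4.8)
The plan is to deduce the statement from Theorem~\ref{matrixring_thm}. Since $A$ and $B$ are left weakly Gorenstein, ${}^{\perp}A=\Gp(A)$ and ${}^{\perp}B=\Gp(B)$. The ``if'' part is the easy half: given $\phi$ monic with $\cok\phi\in\Gp(B)$ and $Y\in\Gp(A)$, one applies $\Hom_B(-,B)$ to $0\to M\otimes_A Y\stackrel{\phi}\to X\to\cok\phi\to 0$; as $\cok\phi\in\Gp(B)$ gives $\Ext^{\ge1}_B(\cok\phi,B)=0$, one reads off that $\phi^*$ is epic and that $\phi$ induces isomorphisms $\Ext^i_B(M\otimes_A Y,B)\cong\Ext^i_B(X,B)$ for all $i\ge1$, so Theorem~\ref{matrixring_thm} (with $Y\in{}^{\perp}A$) gives $\begin{bmatrix}X\\Y\end{bmatrix}_\phi\in{}^{\perp}\m$. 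For the converse, if $\begin{bmatrix}X\\Y\end{bmatrix}_\phi\in{}^{\perp}\m$ then Theorem~\ref{matrixring_thm} already gives $Y\in{}^{\perp}A=\Gp(A)$ and the two $\Ext$-conditions on $\phi$; and once $\phi$ is known to be monic, running the same $\Hom_B(-,B)$ computation in reverse forces $\Ext^{\ge1}_B(\cok\phi,B)=0$, i.e.\ $\cok\phi\in{}^{\perp}B=\Gp(B)$. Thus everything reduces to showing that every semi-Gorenstein-projective $\m$-module has a monic structure map.

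To prove this, I would first note that the hypothesis $D(M_A)\in({}^{\perp}A)^\perp$ is equivalent, via $\Ext^i_A(-,D(M_A))\cong D\operatorname{Tor}^A_i(M,-)$, to $\operatorname{Tor}^A_i(M,N)=0$ for all $i\ge1$ and all $N\in{}^{\perp}A$. Now let $\begin{bmatrix}X\\Y\end{bmatrix}_\phi\in{}^{\perp}\m$, so $Y\in\Gp(A)$. As ${}^{\perp}\m$ is closed under taking syzygies, $\Omega_\m\begin{bmatrix}X\\Y\end{bmatrix}_\phi$ is again semi-Gorenstein-projective; and computing a projective cover over the triangular matrix ring one finds $\Omega_\m\begin{bmatrix}X\\Y\end{bmatrix}_\phi\cong\begin{bmatrix}W\\ \Omega_A Y\end{bmatrix}_{\phi_1}$ where the kernel of the structure map $\phi_1$ is $\operatorname{Tor}^A_1(M,Y)=0$, so $\phi_1$ is monic. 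Applying Theorem~\ref{matrixring_thm} to this semi-Gorenstein-projective module and arguing exactly as in the first paragraph — using that $\phi_1$ is monic and that $B$ is left weakly Gorenstein — gives $\cok\phi_1\in{}^{\perp}B=\Gp(B)$; also $\Omega_A Y\in\Gp(A)$. I would then feed these into $0\to\begin{bmatrix}M\otimes_A\Omega_A Y\\ \Omega_A Y\end{bmatrix}_{\mathrm{id}}\to\begin{bmatrix}W\\ \Omega_A Y\end{bmatrix}_{\phi_1}\to\begin{bmatrix}\cok\phi_1\\ 0\end{bmatrix}\to 0$, and use that the functor $Z\mapsto\begin{bmatrix}M\otimes_A Z\\ Z\end{bmatrix}_{\mathrm{id}}$ carries $\Gp(A)$ into $\Gp(\m)$ (it is exact on $\Gp(A)$ by the $\operatorname{Tor}$-vanishing, sends projectives to projectives, and $\Hom_\m(\begin{bmatrix}M\otimes_A P\\ P\end{bmatrix}_{\mathrm{id}},\m)\cong\Hom_A(P,A)$ shows it sends complete $A$-projective resolutions to complete ones over $\m$) and that $N\mapsto\begin{bmatrix}N\\ 0\end{bmatrix}$ carries $\Gp(B)$ into $\Gp(\m)$ (similarly, with $\Hom_\m(\begin{bmatrix}P\\ 0\end{bmatrix},\m)\cong\Hom_B(P,B)\oplus\Hom_B(P,M)$ and $\widehat{\Ext}_B(\Gp(B),M)=0$, which holds since $\pd_B M<\infty$); since $\Gp(\m)$ is closed under extensions, this yields $\Omega_\m\begin{bmatrix}X\\Y\end{bmatrix}_\phi\in\Gp(\m)$.

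Consequently $\begin{bmatrix}X\\Y\end{bmatrix}_\phi$ has Gorenstein-projective dimension at most $1$; being also semi-Gorenstein-projective, it is then Gorenstein-projective (a semi-Gorenstein-projective module of finite Gorenstein-projective dimension is Gorenstein-projective). Finally, a Gorenstein-projective $\m$-module is the first syzygy $\Omega_\m N$ of some $N=\begin{bmatrix}X'\\ Y'\end{bmatrix}_{\phi'}\in\Gp(\m)$, and $Y'\in\Gp(A)$ (the second-coordinate functor $\begin{bmatrix}X'\\ Y'\end{bmatrix}_{\phi'}\mapsto Y'$ carries $\Gp(\m)$ into $\Gp(A)$: it is exact, preserves projectives, is left adjoint to $Z\mapsto\begin{bmatrix}0\\ Z\end{bmatrix}$, and $\pd_\m\begin{bmatrix}0\\ A\end{bmatrix}<\infty$ since $\pd_B M<\infty$); hence $\operatorname{Tor}^A_1(M,Y')=0$, so the structure map of $\Omega_\m N=\begin{bmatrix}X\\Y\end{bmatrix}_\phi$ — namely $\phi$ — is monic, as wanted. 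The step I expect to be the main obstacle is the one in the second paragraph: pinning down $\Omega_\m\begin{bmatrix}X\\Y\end{bmatrix}_\phi$ together with its structure map and cokernel, and verifying that the three auxiliary functors preserve Gorenstein-projectivity under exactly the hypotheses $\pd_B M<\infty$ and $D(M_A)\in({}^{\perp}A)^\perp$ (so that one does not need $\pd M_A<\infty$, as in Theorem~\ref{Z3 thm}); if one is willing instead to invoke the description of $\Gp(\m)$ over triangular matrix rings from \cite{LZ,XZ} directly, both this step and the extraction of the three conditions from $\begin{bmatrix}X\\Y\end{bmatrix}_\phi\in\Gp(\m)$ become immediate.
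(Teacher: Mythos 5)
Your proof is correct in outline, but it takes a genuinely different route from the paper's. The paper argues directly on $X$: it pulls back the projective cover $P\twoheadrightarrow X$ along $\phi$ to get $E\to P$, chases the two long exact $\Ext_B(-,B)$-sequences (using the conditions ``$\phi^*$ epi and $\Ext^i_B(\phi,B)$ iso'' supplied by Theorem~\ref{matrixring_thm}) to conclude $E\in\leftidx{^\perp}B=\Gp(B)$ with $\psi^*$ epic, and then uses reflexivity of Gorenstein-projective modules under $(-)^*$ to see that $\psi$, hence $\phi$, is monic and that $\cok\phi\cong\cok\psi$ is a cosyzygy of $E$. You instead work with the first syzygy: its structure map is monic for free because its kernel is $\operatorname{Tor}^A_1(M,Y)$, which vanishes since $Y\in\leftidx{^\perp}A$ and $D(M_A)\in(\leftidx{^\perp}A)^\perp$; you then show the syzygy is Gorenstein-projective via the canonical extension and the two embedding functors, invoke the Auslander--Bridger/Holm fact that a semi-Gorenstein-projective module of finite Gorenstein-projective dimension is Gorenstein-projective, and finally read off monicity of $\phi$ from the original module being a syzygy of a Gorenstein-projective module whose bottom component lies in $\Gp(A)$. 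Each step checks out (I verified the adjunction and total-acyclicity arguments for your three functors; they do go through under only $\pd_BM<\infty$ and $D(M_A)\in(\leftidx{^\perp}A)^\perp$). The trade-off: the paper's proof is shorter and essentially self-contained, needing only the duality of $(-)^*$ on $\Gp(B)$; yours is heavier on Gorenstein-homological machinery and should cite the ``finite Gorenstein-projective dimension plus semi-Gorenstein-projective implies Gorenstein-projective'' result, but in exchange it proves more --- it establishes directly that every semi-Gorenstein-projective $\m$-module is Gorenstein-projective, i.e.\ it subsumes the converse direction of Proposition~\ref{matrixring lwg_prop} without passing through Theorem~\ref{Z2 thm}, and hence without the extra hypothesis $\pd M_A<\infty$. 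That observation is worth keeping even if you adopt the shorter pullback argument for the lemma itself.
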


\begin{proof}
``if part'': Assume that $0\to M\otimes Y\stackrel{\phi}\to X\to \cok\phi\to0$ is exact and $\cok\phi\in\Gp(B)$. Applying $\Hom(-,B)$ to this exact sequence, it is easy to see that $\phi^*$ is an epimorphism and $\Ext^k(\phi,B)$ are isomorprhisms for all $k>0$. Furthermore, $Y\in\Gp(A)\subseteq \leftidx{^\perp}A$. Hence, by the ``if part'' of Theorem \ref{matrixring_thm}, $\begin{bmatrix}X\\Y\end{bmatrix}_\phi\in\leftidx{^\perp}\m$.

``only if part'': Let $0\to L\to P\stackrel{\pi}\to X\to 0$ an exact sequence, where $\pi$ is the projective cover. Taking its pull-back with the morphism $\phi$, we have a commutative diagram with exact rows:
$$\xymatrix{0\ar[r]&L\ar[r]\ar@{=}[d]&E\ar[r]\ar[d]^{\psi\ \ \ (p.b.)}&M\otimes Y\ar[r]\ar[d]^\phi&0.\\
0\ar[r]&L\ar[r]&P\ar[r]^\pi&X\ar[r]&0 }$$

Applying $\Hom_B(-,B)$ to the first two rows, we obtain a commutative diagram with long exact sequences, starting with
$$\xymatrix{0\ar[r]&X^*\ar[r]\ar@{->>}[d]^{\phi^*}&P^*\ar[r]\ar[d]^{\psi^*}&L^*\ar[r]\ar@{=}[d]& \Ext^1_B(X, B)\ar[r]\ar[d]_\wr^{\Ext_B^1(\phi,B)}&\cdots \\
0\ar[r]&(M\otimes_A Y)^*\ar[r]&E^*\ar[r]&L^*\ar[r]&\Ext_B^1(M\otimes Y,B)\ar[r]&\cdots }$$
and continues for any $k>0$ as:
$$\xymatrix{\cdots\ar[r]&0\ar[r]\ar[d]&\Ext^k(L,B)\ar[r]\ar@{=}[d]& \Ext^{k+1}(X,B)\ar[r]\ar[d]_\wr^{\Ext^{k+1}(\phi,B)}& 0\ar[r]\ar[d]&\cdots \\
\cdots\ar[r]& \Ext^k(E,B)\ar[r]&\Ext^k(L,B)\ar[r]& \Ext^{k+1}(M\otimes Y,B)\ar[r]& \Ext^{k+1}(E,B)\ar[r]&\cdots },$$
where $\phi^*$ is an epimorphism and $\Ext^k_B(\phi,B)$ are isomorphisms for all $k>0$ due to the ``only if part'' of Theorem \ref{matrixring_thm}.

Hence, it is easy to see that $E\in\leftidx{^\perp}B=\Gp(B)$ and $\psi^*$ is an epimorphism. As $\Hom_B(-,B)$ is a duality on the subcategory $\Gp(B)$, it follows that $\psi$ is a monomorphism. Thus, so is $\phi$ due to the pull-back diagram.

On the other hand, since $\psi^*$ is an epimorphism, $\psi$ is a left $\Proj(B)$-approximation of $E$. Hence $\cok\psi\cong\cok\phi$ is again a Gorenstein-projective $B$-module.
\end{proof}

This lemma characterizes semi-Gorenstein projective modules over $\m$. On the other hand, it is known that there is a similar characterization of Gorenstein-projective modules over matrix rings:

\begin{thm}\cite[Theorem 1.4]{Z2}\label{Z2 thm}
Let $A$, $B$ be artin algebras and $\m=\begin{bmatrix}B&_BM_A\\0&A\end{bmatrix}$. Assume $M$ is a compatible $B$-$A$-bimodule. i.e. $M$ satisfies the following conditions:
\begin{enumerate}
\item For any exact sequence $Q^\bullet$ of projective $A$-modules, $M\otimes Q^\bullet$ is exact.
\item For any exact sequence $P^\bullet$ of projective $B$-modules, $\Hom(P^\bullet, M)$ is exact. 	
\end{enumerate}
 Then $\begin{bmatrix}X\\Y\end{bmatrix}_\phi\in \Gp(\m)$ if and only if $\phi$  is a monomorphism, $\cok \phi\in\Gp(B)$ and $Y\in\Gp(A)$.
\end{thm}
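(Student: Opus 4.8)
The plan is to prove both implications by transporting complete projective resolutions among $\m$, $A$ and $B$ via the standard calculus of triangular matrix modules. First I would assemble the toolkit. Writing $\m$-modules as triples $\begin{bmatrix}X\\Y\end{bmatrix}_\phi$, consider the exact functors $q\colon\begin{bmatrix}X\\Y\end{bmatrix}_\phi\mapsto Y$ and $p\colon\begin{bmatrix}X\\Y\end{bmatrix}_\phi\mapsto X$, the right exact functor $c\colon\begin{bmatrix}X\\Y\end{bmatrix}_\phi\mapsto\cok\phi$, the functor $T\colon Y\mapsto\begin{bmatrix}M\otimes_A Y\\Y\end{bmatrix}_{1}$ (which is left adjoint to $q$, with $T(A)=\begin{bmatrix}M\\A\end{bmatrix}$), and $\begin{bmatrix}-\\0\end{bmatrix}\colon X\mapsto\begin{bmatrix}X\\0\end{bmatrix}$. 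I would use the classical facts that the indecomposable projective $\m$-modules are the $\begin{bmatrix}P\\0\end{bmatrix}$ with $P\in\Proj B$ and the $T(Q)$ with $Q\in\Proj A$; that for every projective $P'\in\Proj\m$ there is a \emph{degreewise split} short exact sequence $0\to T(qP')\to P'\to\begin{bmatrix}cP'\\0\end{bmatrix}\to0$ with $cP'\in\Proj B$; and, from the adjunction $T\dashv q$ together with a one-line check, the identifications $\Hom_\m(N,\begin{bmatrix}B\\0\end{bmatrix})\cong\Hom_B(cN,B)$, $\Hom_\m(\begin{bmatrix}X\\0\end{bmatrix},\begin{bmatrix}M\\A\end{bmatrix})\cong\Hom_B(X,M)$ and $\Hom_\m(T(Q),\begin{bmatrix}M\\A\end{bmatrix})\cong\Hom_A(Q,A)$. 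Finally I would record the only two uses of compatibility: condition~(1) says $M\otimes_A-$ sends acyclic complexes of projective $A$-modules to acyclic complexes (equivalently $\operatorname{Tor}^A_{i}(M,G')=0$ for all $i\ge1$, $G'\in\Gp A$), and condition~(2) says $\Hom_B(-,M)$ sends acyclic complexes of projective $B$-modules to acyclic complexes.

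For the ``if'' direction, assume $\phi$ monic, $C:=\cok\phi\in\Gp B$ and $Y\in\Gp A$. First I would show $\begin{bmatrix}C\\0\end{bmatrix}\in\Gp\m$: apply $\begin{bmatrix}-\\0\end{bmatrix}$ to a complete $B$-projective resolution of $C$, obtaining an acyclic complex of projective $\m$-modules with cocycle $\begin{bmatrix}C\\0\end{bmatrix}$, and test $\Hom_\m(-,\m)$-exactness on the two summands $\begin{bmatrix}B\\0\end{bmatrix}$ and $\begin{bmatrix}M\\A\end{bmatrix}$ of ${}_\m\m$; by the displayed identifications this becomes $\Hom_B(-,B)$- and $\Hom_B(-,M)$-exactness, the first from $C\in\Gp B$ and the second from compatibility~(2). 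Next I would show $T(Y)\in\Gp\m$: apply $T$ to a complete $A$-projective resolution of $Y$; compatibility~(1) makes the resulting complex of projective $\m$-modules acyclic and pins its degree-zero cocycle as $T(Y)$, while $\Hom_\m(-,\m)$-exactness reduces through $c\circ T=0$ and the adjunction to $\Hom_A(-,A)$-exactness of the starting complex. Since $\phi$ is monic with cokernel $C$ there is a short exact sequence $0\to T(Y)\to\begin{bmatrix}X\\Y\end{bmatrix}_\phi\to\begin{bmatrix}C\\0\end{bmatrix}\to0$, and as $\Gp\m$ is closed under extensions (a standard property of Gorenstein-projective modules) we conclude $\begin{bmatrix}X\\Y\end{bmatrix}_\phi\in\Gp\m$.

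For the ``only if'' direction, let $\mathbf P^\bullet$ be a complete $\m$-projective resolution with degree-zero cocycle $G=\begin{bmatrix}X\\Y\end{bmatrix}_\phi$. Splicing the degreewise split sequences above gives a degreewise split short exact sequence of complexes $0\to T(q\mathbf P^\bullet)\to\mathbf P^\bullet\to\begin{bmatrix}c\mathbf P^\bullet\\0\end{bmatrix}\to0$. Here $q\mathbf P^\bullet$ is an acyclic complex of projective $A$-modules; by compatibility~(1), $T(q\mathbf P^\bullet)$ is acyclic, hence so are $\begin{bmatrix}c\mathbf P^\bullet\\0\end{bmatrix}$ and $c\mathbf P^\bullet$, the latter a complex of projective $B$-modules. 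Applying $\Hom_\m(-,\begin{bmatrix}M\\A\end{bmatrix})$ to the split sequence of complexes and using the identifications yields a short exact sequence of complexes $0\to\Hom_B(c\mathbf P^\bullet,M)\to\Hom_\m(\mathbf P^\bullet,\begin{bmatrix}M\\A\end{bmatrix})\to\Hom_A(q\mathbf P^\bullet,A)\to0$, whose left term is acyclic by compatibility~(2) and whose middle term is acyclic because $\begin{bmatrix}M\\A\end{bmatrix}$ is a summand of ${}_\m\m$; therefore $\Hom_A(q\mathbf P^\bullet,A)$ is acyclic and $Y\in\Gp A$. Now taking degree-zero cocycles in the split sequence of acyclic complexes, and invoking compatibility~(1) once more — legitimately, since $Y\in\Gp A$ — to identify $Z^0\big(T(q\mathbf P^\bullet)\big)=T(Y)$, I get an exact sequence $0\to T(Y)\to G\to\begin{bmatrix}Z^0(c\mathbf P^\bullet)\\0\end{bmatrix}\to0$ whose first map is, by comparison of structure maps, $(\phi,1_Y)$; hence $\phi$ is monic and $\cok\phi\cong Z^0(c\mathbf P^\bullet)$. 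Finally $\Hom_\m(\mathbf P^\bullet,\begin{bmatrix}B\\0\end{bmatrix})\cong\Hom_B(c\mathbf P^\bullet,B)$ is acyclic because $\begin{bmatrix}B\\0\end{bmatrix}$ is a summand of ${}_\m\m$, so $c\mathbf P^\bullet$ is a complete $B$-projective resolution of $\cok\phi$ and $\cok\phi\in\Gp B$.

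I expect the main obstacle to be the circular dependence among the three conclusions in the ``only if'' part: one cannot read off ``$\phi$ monic'' or ``$\cok\phi\in\Gp B$'' on their own, because identifying $Z^0(c\mathbf P^\bullet)$ and controlling the acyclicity bookkeeping for $T(q\mathbf P^\bullet)$ already require the vanishing of $\operatorname{Tor}^A_i(M,-)$ on the cosyzygies of $Y$, hence require $Y\in\Gp A$ first. So the proof is forced into the order $Y\in\Gp A\Rightarrow\phi$ monic $\Rightarrow\cok\phi\in\Gp B$, with the step $Y\in\Gp A$ carried out using only the part of the calculus ($q$, the summand $\begin{bmatrix}M\\A\end{bmatrix}$, and compatibility~(2) for the mixed groups $\Hom_B(-,M)$) that presupposes nothing about $\phi$. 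The second delicate point — really a matter of bookkeeping rather than difficulty — is to see that condition~(1) is exactly what transports $M\otimes_A-$ across acyclic complexes of projectives (and kills higher $\operatorname{Tor}$ against Gorenstein-projectives), while condition~(2) is exactly what tames the unavoidable $\Hom_B(-,M)$-terms produced by testing $\Hom_\m(-,\m)$-acyclicity against the summand $\begin{bmatrix}M\\A\end{bmatrix}$; once these two transport properties and the three $\Hom$-identifications are in hand, both directions are formal.
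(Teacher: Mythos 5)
This statement is quoted in the paper from \cite[Theorem 1.4]{Z2} without proof, so there is no internal argument to compare against; judged on its own, your proof is correct and is essentially the standard argument of the cited source (the triangular-matrix/recollement calculus with the adjoint pair $T\dashv q$, the degreewise split sequence $0\to T(q\mathbf P^\bullet)\to\mathbf P^\bullet\to\bigl[\begin{smallmatrix}c\mathbf P^\bullet\\0\end{smallmatrix}\bigr]\to0$, and testing $\Hom_\m(-,\m)$-acyclicity against the two projective summands). All the delicate points are handled properly: the identification $Z^0(M\otimes q\mathbf P^\bullet)=M\otimes Y$ does hold (in fact compatibility (1) alone already forces $\operatorname{Tor}^A_i(M,Z^n)=0$ for the cocycles of any acyclic complex of projectives, so your extra caution in first establishing $Y\in\Gp(A)$ is safe but not strictly needed there), and the extension-closure of $\Gp(\m)$ used in the ``if'' direction is standard.
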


Combining Lemma \ref{matrixring lwg_lem} and Theorem \ref{Z2 thm}, we can show the following result about the left weakly Gorensteinness of matrix rings, which improves Theorem \ref{Z3 thm}.

\begin{prop}\label{matrixring lwg_prop}
Let $A$, $B$ be artin algebras and $\m=\begin{bmatrix}B&_BM_A\\0&A\end{bmatrix}$.  Assume that $\pd _BM<\infty$, $\pd M_A<\infty$ and $D(M_A)\in(\leftidx{^\perp}A)^\perp$. Then $\m$ is left weakly Gorenstein if and only if both $A$ and $B$ are left weakly Gorenstein.
\end{prop}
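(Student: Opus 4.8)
The plan is to derive the Proposition by matching two descriptions of subcategories of $\m$-mod that are already available. Lemma \ref{matrixring lwg_lem} says that, once $A$ and $B$ are left weakly Gorenstein, a module $\begin{bmatrix}X\\Y\end{bmatrix}_\phi$ lies in $\leftidx{^\perp}\m$ exactly when $\phi$ is monic, $\cok\phi\in\Gp(B)$ and $Y\in\Gp(A)$; Theorem \ref{Z2 thm} says that, once $M$ is a compatible bimodule, a module $\begin{bmatrix}X\\Y\end{bmatrix}_\phi$ lies in $\Gp(\m)$ under the very same three conditions. So the argument splits into: (i) checking that the finiteness hypotheses make $M$ a compatible bimodule, so that both descriptions are simultaneously in force; and (ii) extracting left weak Gorensteinness of $A$ and of $B$ from that of $\m$.

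For the ``if'' direction I would first carry out (i). It is standard that $\pd M_A<\infty$ makes $M\otimes_A-$ preserve exactness of exact complexes of projective $A$-modules (since then $\operatorname{Tor}_i^A(M,-)=0$ for $i$ large) — this is condition $(1)$ of Theorem \ref{Z2 thm} — and that $\pd _BM<\infty$ makes $\Hom_B(-,M)$ preserve exactness of exact complexes of projective $B$-modules, by a dimension‑shift argument — condition $(2)$; hence $M$ is a compatible bimodule and Theorem \ref{Z2 thm} applies. Comparing its conclusion with that of Lemma \ref{matrixring lwg_lem} then gives $\Gp(\m)=\leftidx{^\perp}\m$ at once whenever $A$ and $B$ are left weakly Gorenstein. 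This is precisely the promised sharpening of Theorem \ref{Z3 thm}: the extra hypothesis there, that every semi-Gorenstein-projective $\m$-module be monic, is no longer needed, since it is part of the conclusion of Lemma \ref{matrixring lwg_lem}.

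For the ``only if'' direction I would argue with two test modules (alternatively, this direction follows verbatim from the ``only if'' direction of Theorem \ref{Z3 thm}). Given $X\in\leftidx{^\perp}B$, the $\m$-module $\begin{bmatrix}X\\0\end{bmatrix}$ with zero structure map satisfies the criterion of Theorem \ref{matrixring_thm} for membership in $\leftidx{^\perp}\m$: since $M\otimes_A 0=0$ and $\Ext^i_B(X,B)=0$ for $i\ge1$, all three conditions there hold trivially. As $\m$ is left weakly Gorenstein, this module lies in $\Gp(\m)$, and then the ``only if'' direction of Theorem \ref{Z2 thm} forces its cokernel, namely $X$, into $\Gp(B)$; hence $B$ is left weakly Gorenstein. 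Dually, given $Y\in\leftidx{^\perp}A$, the $\m$-module $\begin{bmatrix}M\otimes_A Y\\Y\end{bmatrix}_{\mathrm{id}}$ satisfies the criterion of Theorem \ref{matrixring_thm} (with $\phi=\mathrm{id}$ the first two conditions are trivial and the third is $Y\in\leftidx{^\perp}A$), so it lies in $\Gp(\m)$, and Theorem \ref{Z2 thm} then forces $Y\in\Gp(A)$; hence $A$ is left weakly Gorenstein.

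I do not expect a genuine obstacle here: the substantive work — describing $\leftidx{^\perp}\m$ by the conditions ``$\phi$ monic, $\cok\phi\in\Gp(B)$, $Y\in\Gp(A)$'' — has already been done in Lemma \ref{matrixring lwg_lem}, so proving the Proposition amounts to assembling that lemma with Theorems \ref{Z2 thm} and \ref{matrixring_thm}. The only point requiring a line or two of care is the verification in step (i) that $\pd _BM<\infty$ and $\pd M_A<\infty$ make $M$ a compatible bimodule, which is what licenses the use of Theorem \ref{Z2 thm}; and the routine observation that the two test modules above are indeed semi-Gorenstein-projective over $\m$.
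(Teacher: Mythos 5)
Your argument is correct, and its substantive direction is the same as the paper's: verify that $\pd_BM<\infty$ and $\pd M_A<\infty$ make $M$ a compatible bimodule, then identify $\leftidx{^\perp}\m$ (via Lemma \ref{matrixring lwg_lem}) with $\Gp(\m)$ (via Theorem \ref{Z2 thm}), since both are cut out by the same three conditions. Where you diverge is the converse: the paper simply invokes the ``only if'' part of Theorem \ref{Z3 thm}, whereas you reprove it with the test modules $\begin{bmatrix}X\\0\end{bmatrix}$ and $\begin{bmatrix}M\otimes_AY\\Y\end{bmatrix}_{\mathrm{id}}$, checking membership in $\leftidx{^\perp}\m$ by Theorem \ref{matrixring_thm} and then extracting $X\in\Gp(B)$, resp.\ $Y\in\Gp(A)$, from Theorem \ref{Z2 thm}; both verifications are valid, so this is a sound, self-contained alternative (and you note the citation route as well). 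The one place to be careful is your justification of compatibility condition (2): as literally stated (``any exact sequence of projective $B$-modules''), it does \emph{not} follow from $\pd_BM<\infty$ by dimension shift --- already for projective $_BM$ it would force every exact complex of projectives to be totally acyclic, which fails in general. The definition of compatibility in \cite{Z2} imposes (2) only for complete (totally acyclic) complexes $P^\bullet$, for which the finite $B$-projective resolution of $M$ does yield exactness of $\Hom_B(P^\bullet,M)$; this is precisely \cite[Proposition 1.3(1)]{Z2}, which is what the paper cites and what Theorem \ref{Z2 thm} actually requires, so your overall argument stands once that step is phrased for the correct class of complexes.
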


\begin{proof}
Since $\pd _BM<\infty$ and $\pd M_A<\infty$, $M$ is a compatible $B$-$A$-bimodule, according to \cite[Proposition 1.3(1)]{Z2}.

If $\m$ is a left weakly Gorenstein algebra, then both $A$ and $B$ are left weakly Gorenstein according to Theorem \ref{Z3 thm}.

Conversely, assume both $A$ and $B$ are left weakly Gorenstein. Let $\begin{bmatrix}X\\Y\end{bmatrix}_\phi\in\leftidx{^\perp}\m$. According to Lemma \ref{matrixring lwg_lem}, $\phi$  is a monomorphism, $\cok \phi\in\Gp(B)$ and $Y\in\Gp(A)$.
Hence by Theorem \ref{Z2 thm}, $\begin{bmatrix}X\\Y\end{bmatrix}_\phi\in \Gp(\m)$. Therefore, $\m$ is left weakly Gorenstein.
\end{proof}

\begin{constr}\label{matrix ring_constr}
Now we apply this result to the algebra $\m=A\otimes kQ/I$. Let $n$ be a source vertex of the quiver $Q$. Then $\m'=\m\otimes(1-e_n)(kQ/I)(1-e_n)=A\otimes kQ'/I'$ is a subalgebra of $\m$. Here $Q'$ is again an acyclic quiver, obtained by deleting vertex $n$ of $Q$ and $I'$ is again a monomial ideal. The algebra $\m=A\otimes kQ/I$ can also be viewed as an upper triangular matrix ring: $$\m=\begin{bmatrix}\m' & A\otimes \rad P(n)\\ 0 &A \end{bmatrix}.$$ 
\end{constr}

Now we show the main theorem of this section, which implies that Proposition 4.8 in \cite{Z3} still holds for algebras $\m=A\otimes kQ/I$, where $Q$ is acyclic and $I$ is a monomial ideal.

\begin{thm}\label{lwg smon_lem}
The algebra $A$ is  left weakly Gorenstein if and only if so is $\m=A\otimes kQ/I$.
\end{thm}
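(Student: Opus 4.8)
The plan is to argue by induction on the number $|Q_0|$ of vertices of $Q$, using the presentation of $\m$ as an upper triangular matrix ring from Construction \ref{matrix ring_constr} together with Proposition \ref{matrixring lwg_prop}. The base case $|Q_0|=1$ is immediate: an acyclic quiver on one vertex has no arrows, so $I=0$, $kQ/I=k$ and $\m=A\otimes_k k\cong A$, and the assertion is trivial.

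For the inductive step, I would assume the statement for all acyclic quivers with fewer vertices (and all admissible monomial ideals), fix a source vertex $n$ of $Q$, and invoke Construction \ref{matrix ring_constr} to write $\m\cong\begin{bmatrix}\m' & M\\ 0 & A\end{bmatrix}$, where $M=A\otimes_k\rad P(n)$ and $\m'=A\otimes_k kQ'/I'$ with $Q'$ the acyclic quiver obtained from $Q$ by deleting $n$ and $I'$ the induced monomial ideal. Since $n$ is a source and $Q$ is acyclic, $\rad P(n)$ vanishes at the vertex $n$, hence is a $kQ'/I'$-module, so $M$ is a genuine $\m'$-$A$-bimodule. The heart of the argument is then to verify the three hypotheses of Proposition \ref{matrixring lwg_prop}. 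First, $\pd_{\m'}M<\infty$: by Corollary \ref{CE cor} one has $\pd_{\m'}M=\pd_A A+\pd_{kQ'/I'}\rad P(n)=\pd_{kQ'/I'}\rad P(n)$, which is finite because $kQ'/I'$, having acyclic ordinary quiver $Q'$, has finite global dimension --- concretely, $\rad P(n)$ is filtered by simple $kQ'/I'$-modules $S(j)$ with $j$ reachable from $n$ by a path of positive length (in particular $j<n$), and an induction along the ordering of $Q'_0$ shows each simple $kQ'/I'$-module has finite projective dimension. Second, $\pd M_A<\infty$: as a right $A$-module $M=A\otimes_k\rad P(n)$ is free, so $\pd M_A=0$. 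Third, $D(M_A)\in(\leftidx{^\perp}A)^\perp$: here $D(M_A)$ is a direct sum of copies of $D(A_A)$, hence is injective as a left $A$-module, and injective modules lie in $\mathcal C^\perp$ for every subcategory $\mathcal C$.

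Granting these verifications, Proposition \ref{matrixring lwg_prop} yields that $\m$ is left weakly Gorenstein if and only if both $\m'$ and $A$ are. Since $Q'$ has $|Q_0|-1$ vertices, the induction hypothesis applied to $\m'=A\otimes_k kQ'/I'$ shows that $\m'$ is left weakly Gorenstein if and only if $A$ is. Chaining the two equivalences completes the induction, and with it the proof.

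The only point above that is not purely formal is the finiteness of $\pd_{kQ'/I'}\rad P(n)$, and I expect this to be the (mild) main obstacle; it rests on the standard fact that a finite dimensional algebra with acyclic ordinary quiver has finite global dimension. Everything else --- the matrix presentation, the Cartan--Eilenberg computation of projective dimensions, and the triviality of the second and third conditions --- is routine bookkeeping.
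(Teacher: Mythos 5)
Your proposal is correct and follows essentially the same route as the paper's own proof: induction on $|Q_0|$, peeling off a source vertex $n$ to present $\m$ as the triangular matrix ring $\begin{bmatrix}\m' & A\otimes\rad P(n)\\ 0 & A\end{bmatrix}$, verifying the three hypotheses of Proposition \ref{matrixring lwg_prop} exactly as the paper does (Cartan--Eilenberg for $\pd_{\m'}M$, freeness of $M_A$, injectivity of $D(M_A)$), and then chaining with the induction hypothesis. The only difference is that you make explicit the finiteness of $\pd_{kQ'/I'}\rad P(n)$, which the paper leaves implicit; this is a welcome clarification rather than a deviation.
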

\begin{proof}
We prove the first statement using induction on $|Q_0|$. If $|Q_0|=1$, the statement follows trivially. Now assume $|Q_0|=n>1$.
Let $\m=\begin{bmatrix}\m' & A\otimes \rad P(n)\\ 0 &A \end{bmatrix}$ for some source vertex $n\in Q_0$ as in Construction \ref{matrix ring_constr}.

First, we check that $\m$, as a triangular matrix ring, satisfies the hypotheses in Proposition \ref{matrixring lwg_prop}.
According to  Corollary \ref{CE cor}, $$\pd_{\m'} A\otimes \rad P(n)=\pd_AA+\pd_{kQ'/I'} \rad P(n)<\infty.$$
As a right $A$-module, $A\otimes \rad P(n)$ is isomorphic to copies of $A$, thus $\pd A\otimes \rad P(n)<\infty$. Also because $D(A\otimes \rad P(n))_A$ is injective,  it is in $(\leftidx{^\perp}A)^\perp$.

Now we can finish the proof as the following. If $A$ is left weakly Gorenstein, so is $\m'=A\otimes kQ'/I'$ by the induction hypothesis. Hence according to Proposition \ref{matrixring lwg_prop}, $\m$ is left weakly Gorenstein. Conversely, if $\m$ is left weakly Gorenstein,   $A$ is left weakly Gorenstein by Proposition \ref{matrixring lwg_prop}.
 \end{proof}

We have  an immediate corollary below,  which completes the proof of Theorem \ref{thm C}.

\begin{cor}
When either $A$ or $\m$ is left weakly Gorenstein, $\smon(Q,I,\leftidx{^\perp}A)=\leftidx{^\perp}\m$.
\end{cor}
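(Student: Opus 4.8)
The plan is to obtain the claimed equality as a short chain of identifications, with all the genuine work already carried out in Theorem~\ref{lwg smon_lem} and Theorem~\ref{LZ3 thm}; the corollary is then ``immediate'' in the literal sense.

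First I would use Theorem~\ref{lwg smon_lem} to strengthen the hypothesis: since $A$ is left weakly Gorenstein if and only if $\m$ is, the assumption that \emph{either} $A$ \emph{or} $\m$ is left weakly Gorenstein in fact forces \emph{both} of them to be. Then I would simply unwind the definitions. Because $\m$ is left weakly Gorenstein, $\leftidx{^\perp}\m=\Gp(\m)$ by the very definition of left weak Gorensteinness. By Theorem~\ref{LZ3 thm} (recorded at the start of this section in the form $\Gp(\m)=\smon(Q,I,\Gp(A))$), the Gorenstein-projective $\m$-modules are exactly the separated monic representations of $(Q,I)$ over $\Gp(A)$. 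Finally, because $A$ is left weakly Gorenstein, $\Gp(A)=\leftidx{^\perp}A$, so $\smon(Q,I,\Gp(A))=\smon(Q,I,\leftidx{^\perp}A)$, since substituting equal subcategories into the construction of Definition~\ref{maindef} yields the same category. Concatenating these equalities gives $\leftidx{^\perp}\m=\smon(Q,I,\Gp(A))=\smon(Q,I,\leftidx{^\perp}A)$, which is the assertion.

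I expect no real obstacle at this stage: the substance has been absorbed into Theorem~\ref{lwg smon_lem} (established through the triangular matrix ring analysis of Proposition~\ref{matrixring lwg_prop} together with Construction~\ref{matrix ring_constr}) and into Theorem~\ref{LZ3 thm}. The only points deserving a moment's care are (i) that the first reduction genuinely relies on the ``if and only if'' in Theorem~\ref{lwg smon_lem}, and (ii) that the subcategory $\smon(Q,I,\leftidx{^\perp}A)$ appearing in Question~\ref{que} — i.e.\ the image of the cotorsion class $\leftidx{^\perp}A$ under the separated monic correspondence of Theorem~\ref{thm A} — is literally the same subcategory $\smon(Q,I,\leftidx{^\perp}A)$ of Definition~\ref{maindef} into which $\Gp(A)$ gets identified, so that no further compatibility argument between the two routes is needed.
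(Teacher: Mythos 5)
Your proposal is correct and follows exactly the same route as the paper: apply Theorem~\ref{lwg smon_lem} to upgrade ``either'' to ``both'', then chain the equalities $\leftidx{^\perp}\m=\Gp(\m)=\smon(Q,I,\Gp(A))=\smon(Q,I,\leftidx{^\perp}A)$ using the definition of left weak Gorensteinness and Theorem~\ref{LZ3 thm}. No gaps.
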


\begin{proof}
By Theorem \ref{lwg smon_lem}, if either $A$ or $\m$ is left weakly Gorenstein, both of them are left weakly Gorenstein. Hence it follows that
$$
\smon(Q,I,\leftidx{^\perp}A)=\smon(Q,I,\Gp(A))=\Gp(\m)=\leftidx{^\perp}\m,
$$
where the second equality is due to Theorem \ref{LZ3 thm}.
\end{proof}

We show the converse of this corollary is also true whenever $Q$ contains at least one arrow, which yields another characterization of left weakly Gorensteinness.

\begin{prop}
As long as $kQ/I$ is not semisimple, the equality $\smon(Q,I,\leftidx{^\perp}A)=\leftidx{^\perp}\m$ holds only if $A$ is left weakly Gorenstein.
\end{prop}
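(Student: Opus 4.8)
The plan is to prove the contrapositive: assuming $A$ is \emph{not} left weakly Gorenstein, I will produce a module in $\smon(Q,I,\leftidx{^\perp}A)$ that is not in $\leftidx{^\perp}\m$, using the fact that $kQ/I$ has some arrow. Since $A$ is not left weakly Gorenstein, there is a semi-Gorenstein-projective $A$-module $N\in\leftidx{^\perp}A$ which is not Gorenstein-projective, i.e. $N\notin\Gp(A)$. The first step is to choose a convenient vertex: since $kQ/I$ is not semisimple, there is at least one arrow, hence a vertex $i$ which is a source but not a sink, or more usefully a vertex $i$ such that $P(i)$ is not simple as a $kQ/I$-module. I would aim to build a representation of the form $N\otimes P(i)$ for a suitable $i$, and show it is separated monic over $\leftidx{^\perp}A$ but fails the $\Ext^{\geq 1}(-,\m)=0$ condition.

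The key computational input is the pair of isomorphisms in Lemma~\ref{adj ext_lem} together with the Cartan–Eilenberg isomorphism (Theorem~\ref{CE iso}). On one hand, $N\otimes P(i)\in\smon(Q,I,\leftidx{^\perp}A)$ because $P(i)$ is a projective $kQ/I$-module and $\smon(Q,I,\leftidx{^\perp}A)=filt(\leftidx{^\perp}A\otimes\Proj(kQ/I))$ by Theorem~\ref{filt_thm} (using that $\leftidx{^\perp}A$ is closed under extensions, being resolving). On the other hand, to test whether $N\otimes P(i)\in\leftidx{^\perp}\m$ I need $\Ext^k_\m(N\otimes P(i),\m)=0$ for all $k\geq 1$. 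Writing $\m={}_\m\m\cong\bigoplus_j (A\otimes P(j))^{d_j}$ as a left module (where $P(j)$ ranges over the indecomposable projective $kQ/I$-modules with appropriate multiplicities coming from $A$), the Cartan–Eilenberg isomorphism gives
$$
\Ext^k_\m(N\otimes P(i),\, A\otimes P(j))\;\cong\;\bigoplus_{p+q=k}\Ext^p_A(N,A)^{?}\otimes\Ext^q_{kQ/I}(P(i),P(j)).
$$
Here is the crux: $\Ext^q_{kQ/I}(P(i),P(j))=0$ for $q>0$ since $P(i)$ is projective over $kQ/I$, so the sum collapses to the $q=0$ term, giving $\Ext^k_A(N,A)\otimes\Hom_{kQ/I}(P(i),P(j))$. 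Since $N\in\leftidx{^\perp}A$, we have $\Ext^k_A(N,A)=0$ for $k\geq 1$, so in fact $N\otimes P(i)\in\leftidx{^\perp}\m$ always — which means this naive choice does \emph{not} witness the failure.

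So the real approach must be different: I will instead exhibit a separated monic representation over $\leftidx{^\perp}A$ that is \emph{not} of the split form $N\otimes P(i)$. The natural candidate is to take the representation $X$ supported on an arrow $\alpha\colon i\to j$ (with $P(i)$ not simple, which exists as $kQ/I$ is non-semisimple) whose branches are built from $N$ so that $\cok_j(X)=N\notin\Gp(A)$ while $X$ is still separated monic with all cokernels in $\leftidx{^\perp}A$. Concretely, for a two-vertex subquiver I would set $X_i$, $X_j$ and $X_\alpha$ so that $X$ lies in $\smon(Q,I,\leftidx{^\perp}A)$ but, because $N$ is not Gorenstein-projective, $X\notin\Gp(\m)=\smon(Q,I,\Gp(A))$ by Theorem~\ref{LZ3 thm}. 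Then I must separately check $X\notin\leftidx{^\perp}\m$; the point is that $\smon(Q,I,\leftidx{^\perp}A)$ may strictly contain $\leftidx{^\perp}\m$ precisely when $A$ is not left weakly Gorenstein, and indeed this must be so because if $\smon(Q,I,\leftidx{^\perp}A)=\leftidx{^\perp}\m$ then, since the left-hand side always contains $\smon(Q,I,\Gp(A))=\Gp(\m)$ with equality forced by Theorem~\ref{thm C}'s machinery, one gets $\leftidx{^\perp}\m=\Gp(\m)$, i.e. $\m$ is left weakly Gorenstein, hence $A$ is by Theorem~\ref{lwg smon_lem} — contradiction. I would reorganize this last observation into the clean argument: \textbf{assume} $\smon(Q,I,\leftidx{^\perp}A)=\leftidx{^\perp}\m$; then $\leftidx{^\perp}\m\supseteq\smon(Q,I,\leftidx{^\perp}A)\supseteq\smon(Q,I,\Gp(A))=\Gp(\m)\supseteq$ projectives, and I need the reverse inclusion $\leftidx{^\perp}\m\subseteq\Gp(\m)$, i.e. that $\m$ is left weakly Gorenstein. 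The hard part — and the place where the hypothesis ``$kQ/I$ not semisimple'' is genuinely used — is showing $\smon(Q,I,\leftidx{^\perp}A)\neq\Gp(\m)$ whenever $\leftidx{^\perp}A\neq\Gp(A)$, equivalently that the separated monic functor does not collapse the strict inclusion $\Gp(A)\subsetneq\leftidx{^\perp}A$; this should follow by picking $N\in\leftidx{^\perp}A\setminus\Gp(A)$ and an arrow $\alpha\colon i\to j$ with $K_\alpha=\emptyset$ (possible since $I$ is admissible and there is an arrow), then verifying that the representation with $X_j=N$, $X_i=0$ elsewhere, trivially extended, lies in $\smon(Q,I,\leftidx{^\perp}A)$ (its only nonzero cokernel is $\cok_j=N\in\leftidx{^\perp}A$) but not in $\smon(Q,I,\Gp(A))$ since that cokernel is not Gorenstein-projective. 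Combining: $\smon(Q,I,\leftidx{^\perp}A)\supsetneq\Gp(\m)$, yet $\smon(Q,I,\leftidx{^\perp}A)=\leftidx{^\perp}\m\supsetneq\Gp(\m)$ says $\m$ is not left weakly Gorenstein, contradicting Theorem~\ref{lwg smon_lem} once we know $A$ would then fail it too. The main obstacle I anticipate is handling the monomial relations carefully so that the chosen representation genuinely satisfies conditions (m1)–(m3), and making sure the non-semisimplicity of $kQ/I$ is exploited at exactly the right point — namely to guarantee an arrow $\alpha$ exists so that a nonzero branch can be placed at a non-source vertex.
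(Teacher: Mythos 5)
There is a genuine gap, and it is one of direction. The inclusion $\smon(Q,I,\leftidx{^\perp}A)\subseteq\leftidx{^\perp}\m$ holds \emph{unconditionally}: by Theorem \ref{filt_thm} the left-hand side is $\filt(\leftidx{^\perp}A\otimes\Proj(kQ/I))$, and your own Cartan--Eilenberg computation shows every generator $N\otimes P(i)$ lies in $\leftidx{^\perp}\m$; since $\leftidx{^\perp}\m$ is closed under extensions, the whole filtration category is contained in it. So the module you are hunting for --- something in $\smon(Q,I,\leftidx{^\perp}A)$ but not in $\leftidx{^\perp}\m$ --- does not exist, and the equality can only fail because $\leftidx{^\perp}\m$ is too \emph{large}. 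Relatedly, your closing ``contradiction'' is not one: assuming $A$ is not left weakly Gorenstein and deriving that $\m$ is not left weakly Gorenstein is exactly what Theorem \ref{lwg smon_lem} asserts, so nothing is refuted. In particular, exhibiting a module in $\smon(Q,I,\leftidx{^\perp}A)\setminus\Gp(\m)$ (e.g.\ $N\otimes S(j)$ at a sink $j$ --- note that at a non-sink vertex this representation fails condition (m2)) is perfectly compatible with the assumed equality, because under that assumption $\leftidx{^\perp}\m\supsetneq\Gp(\m)$ as well.

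The paper's proof goes the opposite way: if $A$ is not left weakly Gorenstein, then by \cite[Theorem 1.2]{RZ2} there is an indecomposable semi-Gorenstein-projective $A$-module $U$ that is not \emph{torsionless}, so its left $\Proj(A)$-approximation $\phi\colon U\to P$ is not a monomorphism. Theorem \ref{matrixring_thm} then produces a semi-Gorenstein-projective module $\begin{bmatrix}P^r\\U\end{bmatrix}_{\phi^r}$ over $T_2^r(A)$, and --- this is where the non-semisimplicity of $kQ/I$ enters --- one writes $\m$ as a triangular matrix ring over $T_2^r(A)$ using a source vertex with $r$ outgoing arrows, and applies Theorem \ref{matrixring_thm} once more to obtain a semi-Gorenstein-projective $\m$-module whose structure map involves the non-monic $\phi$; this module fails to be separated monic, hence lies in $\leftidx{^\perp}\m\setminus\smon(Q,I,\leftidx{^\perp}A)$. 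The essential input you are missing is the passage from ``not Gorenstein-projective'' to ``not torsionless,'' which is what lets one break the monomorphism condition rather than the cokernel condition.
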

\begin{proof}
If $A$ is not a left weakly Gorenstein algebra, then there is an indecomposable semi-Gorenstein-projective $A$-module $U$, which is not torsionless  (see \cite[Theorem 1.2]{RZ2}). Hence a left $\Proj(A)$-approximation $\phi: U\to P $ is not a monomorphism.   According to Theorem \ref{matrixring_thm}, for any $r>0$, $X=\begin{bmatrix} P^r\\U\end{bmatrix}_{\phi^r}$ is a semi-Gorenstein-projective module over the matrix ring $T_2^r(A)=\begin{bmatrix}A&A^r\\0&A\end{bmatrix}$.

Now since $kQ/I$ is not semisimple, $Q$ contains a subquiver
$\xymatrix{
n \ar@<1.2ex>[r]_{\cdots\ \ \ } \ar @<-1.2ex>[r]&n-1 }
$,   where $n$ is a source vertex and there are $r$ arrows from $n$ to $n-1$. Therefore, $\m$ can be considered as a matrix ring $\begin{bmatrix}\m'&_{\m'}M_{T_2^r(A)}\\0&T_2^r(A)\end{bmatrix}$, where $\m'\cong (1-e_n-e_{n-1})\m(1-e_n-e_{n-1})$, which is isomorphic again to $kQ'/I'$ for some acyclic quiver $Q'$ ($Q'$ is obtained from $Q$ by deleting vertices $n$ and $n-1$.) and monomial ideal $I'$ and $M\cong A\otimes_k N$ for some $kQ'/I'$-$kQ''$-bimodule $N$. According to Theorem~\ref{matrixring_thm}, $\begin{bmatrix} M\otimes X\\X \end{bmatrix}_1$ is a semi-Gorenstein-projective module. However, it is not in $\smon(Q,I,\leftidx{^\perp}A)$ as $\phi$ is not a monomorphism.
\end{proof}

Clearly if $kQ/I$ is semisimple,  $\smon(Q,I,\leftidx{^\perp}A)=\leftidx{^\perp}\m$ holds for any algebra $A$. This exhausts all the possibilities when this equality can hold, which gives a complete answer to Question \ref{que}.

\section {\bf  An example}

In this section, we construct an example of left weakly Gorenstein algebra using Theorem \ref{thm C}. As we have pointed out previously, the class of left weakly Gorenstein algebras is much larger than the class of Gorenstein algebras. Finding left weakly Gorenstein as well as non-left weakly Gorenstein algebras is a fundamental problem.
In \cite[Theorem 1.3]{RZ2}, the authors showed the following criterion:

\begin{thm}
Let $A$ be a finite dimensional algebra. If the number of isomorphism classes of indecomposable left $A$-modules which are both semi-Gorenstein-projective and torsionless is finite, then $A$ is left weakly Gorenstein.
\end{thm}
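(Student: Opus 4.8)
The plan is to prove the contrapositive: if $A$ is \emph{not} left weakly Gorenstein, then there are infinitely many pairwise non-isomorphic indecomposable $A$-modules that are simultaneously semi-Gorenstein-projective and torsionless. First I would record three elementary facts: (i) ${}^{\perp}A$ is a resolving subcategory, hence closed under syzygies; (ii) every syzygy $\Omega X$ is torsionless, being a submodule of a projective module; (iii) a semi-Gorenstein-projective module of finite projective dimension is projective --- reduce to the case $\pd\le 1$ and split the presentation $0\to P_1\to P_0\to X\to 0$ using that $\Ext^1(X,A)=0$ forces $\Ext^1(X,P_1)=0$. Combining (i)--(iii), if $M$ is non-projective and semi-Gorenstein-projective, then every syzygy $\Omega^i M$ with $i\ge 1$ is non-projective, semi-Gorenstein-projective, and torsionless.

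Now suppose $A$ is not left weakly Gorenstein and pick an indecomposable $M\in{}^{\perp}A$ with $M\notin\Gp(A)$ (possible since both subcategories are closed under direct summands); then $M$ is non-projective. Assume, for contradiction, that only finitely many indecomposables are both semi-Gorenstein-projective and torsionless, and let $\mathcal T=\add(T_1\oplus\cdots\oplus T_r)$ be the subcategory they generate; by the previous paragraph every $\Omega^i M$ ($i\ge 1$) lies in $\mathcal T$, and $\mathcal T$ is closed under $\Omega$. The crucial step is to show $\mathcal T=\Gp(A)$. Granting this, $\Omega M\in\mathcal T=\Gp(A)$, and then the short exact sequence $0\to\Omega M\to P_0\to M\to 0$ with $\Omega M,P_0\in\Gp(A)$ and $M\in{}^{\perp}A$ forces $M\in\Gp(A)$: dualizing into $A$ yields a short exact sequence of right modules $0\to M^{*}\to P_0^{*}\to(\Omega M)^{*}\to 0$ in which $(\Omega M)^{*}$ is Gorenstein-projective, so $M^{*}$ is a syzygy of a Gorenstein-projective right module and thus $M^{*}\in{}^{\perp}(A^{\op})$; dualizing once more and comparing with the original sequence through the evaluation maps (five lemma) shows $M$ is reflexive; hence $M$ is Gorenstein-projective, contradicting the choice of $M$. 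So $\mathcal T$ must be infinite, as desired.

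It remains to prove $\mathcal T=\Gp(A)$. As $\Gp(A)\subseteq\mathcal T$ always, the content is that every $N\in\mathcal T$ is Gorenstein-projective, i.e.\ occurs as a cocycle of a totally acyclic complex of projectives. The left half of such a complex is the minimal projective resolution of $N$: its cocycles are the syzygies $\Omega^i N\in\mathcal T\subseteq{}^{\perp}A$, so $\Hom(-,A)$ preserves its exactness. For the right half, use that $N$ is torsionless: the canonical map $N\to A^{n_0}$ given by a generating set of $N^{*}=\Hom_A(N,A)$ is a monomorphism and a left $\Proj(A)$-approximation; let $N^{(1)}$ denote its cokernel. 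Applying $\Hom_A(-,A)$ to $0\to N\to A^{n_0}\to N^{(1)}\to 0$ and using $N\in{}^{\perp}A$ together with the left-approximation property gives $\Ext^k(N^{(1)},A)=0$ for all $k\ge 1$, so $N^{(1)}$ is again semi-Gorenstein-projective. One then argues that $N^{(1)}$ is again torsionless, so $N^{(1)}\in\mathcal T$, and the construction iterates indefinitely, producing an exact sequence $0\to N\to A^{n_0}\to A^{n_1}\to\cdots$ which stays exact under $\Hom_A(-,A)$ --- each map $A^{n_j}\to A^{n_{j+1}}$ is the composite of the surjection $A^{n_j}\twoheadrightarrow N^{(j+1)}$ with a left $\Proj(A)$-approximation $N^{(j+1)}\hookrightarrow A^{n_{j+1}}$. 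Splicing this with the projective resolution of $N$ produces a totally acyclic complex, whence $N\in\Gp(A)$.

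The main obstacle is exactly the assertion that the successive cokernels $N^{(j)}$ remain torsionless. In terms of the Auslander--Bridger transpose this is the vanishing of $\Ext^{1}_{A^{\op}}((\Omega N)^{*},A)$ for every $N\in\mathcal T$, and more generally of all the higher obstructions $\Ext^{i}_{A^{\op}}(\operatorname{Tr} N,A)$; this is precisely where the finiteness hypothesis is needed. The idea is that $\mathcal T$, together with the finitely many right modules obtained from it by the duality $(-)^{*}$, forms a finite family closed under the relevant syzygy and cosyzygy operations, so that a pigeonhole argument produces periodicity and forces these obstructions to vanish.
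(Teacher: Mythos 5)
This statement is not proved in the paper at all: it is quoted verbatim from \cite[Theorem 1.3]{RZ2}, so there is no internal proof to compare against, and your proposal must be judged on its own. Its skeleton is sound. Facts (i)--(iii) are correct, the observation that all $\Omega^iM$ ($i\ge1$) of a non-projective $M\in{}^{\perp}\!A$ are non-projective, semi-Gorenstein-projective and torsionless is correct, and so is the descent step: if $\Omega M\in\Gp(A)$ and $M\in{}^{\perp}\!A$, then dualizing $0\to\Omega M\to P_0\to M\to0$ twice and applying the five lemma to the evaluation maps shows $M$ is reflexive with $M^{*}$ Gorenstein-projective, hence $M\in\Gp(A)$. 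You have therefore correctly reduced the theorem to the claim that, under the finiteness hypothesis, every indecomposable semi-Gorenstein-projective torsionless module is Gorenstein-projective.

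The proof of that reduced claim, however, is where the genuine gap lies, and your last paragraph restates the difficulty rather than resolving it. Two concrete problems. First, the proposed ``finite family closed under the relevant syzygy and cosyzygy operations'' is circular: closure under the cosyzygy operation $N\mapsto N^{(1)}$ (i.e.\ torsionlessness of the cokernel of the left $\Proj(A)$-approximation) is exactly the assertion to be proved, so you cannot assume it in order to set up a pigeonhole; indeed the modules $N^{(j)}$ are not even defined past the first failure of torsionlessness. Second, even the pigeonhole that \emph{is} available --- namely: if $N\in\mathcal{T}$ is indecomposable and not Gorenstein-projective, then by the (contrapositive of the) descent step some indecomposable summand of $\Omega N$ lies in $\mathcal{T}\setminus\Gp(A)$, and iterating plus finiteness yields $N\mid\Omega^{d}N$ for some $d\ge1$ --- does not by itself ``force the obstructions to vanish.'' What is missing is the quantitative Auslander--Bridger input: for $M\in{}^{\perp}\!A$ one has $\operatorname{Tr}M\cong\Omega^{i}\operatorname{Tr}\Omega^{i}M$ up to projective summands, and from this, if $M$ and all its syzygies are torsionless, then $\Omega^{i}M$ is $(i+1)$-torsionfree, i.e.\ $\Ext^{l}_{A^{\op}}(\operatorname{Tr}\Omega^{i}M,A)=0$ for $1\le l\le i+1$. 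Only with this lemma does $N\mid\Omega^{kd}N$ for all $k$ give that $N$ is $n$-torsionfree for every $n$, hence Gorenstein-projective. As written, the periodicity you invoke carries no information about $\Ext^{i}_{A^{\op}}(\operatorname{Tr}N,A)$ for large $i$, so the argument does not close.
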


Due to this result, the class of left weakly Gorenstein algebras already contains a wide class of torsionless finite algebras, as well as the algebras over which the number of isomorphism classes of indecomposable semi-Gorenstein-projective modules is finite.
(Check \cite{Rin} for a list of torsionless finite algebras.)

Beyond this, Theorem \ref{thm C} provides an effective method to construct new left weakly Gorenstein algebras, especially CM-infinite ones, extending the list to even wider classes of algebras. We illustrate this in an example of Nakayama algebra below.

We refer to \cite{Rin2} for a detailed survey into the classification of Gorenstein-projective modules over Nakayama algebras.
Recall that for a Nakayama algebra $A$, the {\it Gorenstein core} $\mathcal C(A)$ is the additive full subcategory consists of indecomposable non-projective Gorenstein-projective $A$-modules as well as their projective covers. It turns out that $\mathcal C(A)$ is always an exact abelian subcategory of $A$-mod \cite[Proposition 1]{Rin2}.

\begin{exm}
Let $A$ be the cyclic Nakayama algebra with Kupisch series $(17,18,18)$, i.e. it is obtained by the following quiver subject to relations: $\{\beta\alpha(\gamma\beta\alpha)^5, (\alpha\gamma\beta)^6\}$.
$$
\begin{tikzpicture}[->]
\node(1) at (0,1.7) {$1$};
\node(2) at (1,0) {$2$};
\node (3) at (-1,0) {$3$};

\draw(1)--node [above]{$\ \alpha$}(2);
\draw(2)--node [above]{$\beta$}(3);
\draw(3)--node [above]{$\gamma$}(1);
\end{tikzpicture}
$$

It is easy to check that $A$ is a non-Gorenstein but left weakly Gorenstein algebra.  For the Nakayama algebra $A$ above, the Gorenstein core $\mathcal C(A)\cong k[x]/\langle x^6\rangle$-mod. In fact, indecomposable non-projective Gorenstein-projective $A$-modules are proper quotients of the indecomposable projective module $P(2)$ whose lengths are divisible by $3$.

Consider $Q= \bullet \to \bullet$ and $\m=A\otimes kQ$. Since $A$ is left weakly Gorenstein, so is $\m$. In addition, $\leftidx{^\perp}\m=\Gp(\m)=\smon(Q,0,\Gp(A))\supseteq \smon(Q,0, \mathcal C(A))\cong \mathcal S(k[x]/\langle x^6\rangle)$, the submodule category of $k[x]/\langle x^6\rangle$-mod, which has representation infinite type \cite{RS3}. That is, there are infinitely many isomorphism classes of indecomposable Gorenstein-projective $\m$-modules. Hence $\m$ is a non-Gorenstein but left weakly Gorenstein algebra with infinitely many isomorphism classes of indecomposable left $\m$-modules which are both semi-Gorenstein-projective and torsionless.
\end{exm}

On the other hand, starting from a non-left weakly Gorenstein algebra $A$ (see \cite{JS, RZ2, RZ3, RZ4} for concrete examples), the tensor algebra $A\otimes kQ/I$ will be again non-left weakly Gorenstein.

 \end{document}